\newtheorem{ass}{Assumption}
\newtheorem{remark}{Remark}
\newtheorem{corollary}{Corollary}
\newtheorem{theorem}{Theorem}[section]
\newtheorem{lemma}[theorem]{Lemma}
\newtheorem{example}{Example}
\newcommand*{\textlabel}[2]{%
  \edef\@currentlabel{#1}
  \phantomsection
  #1\label{#2}
}
\def\ps@pprintTitle{%
  \let\@oddhead\@empty
  \let\@evenhead\@empty
  \let\@oddfoot\@empty
  \let\@evenfoot\@empty}
\begin{document}

\begin{frontmatter}

\title{Prokhorov Metric Convergence of the Partial Sum Process for Reconstructed Functional Data}

\author[CSU]{Tim Kutta\corref{cor1}}
\ead{tim.kutta@math.au.dk}
\author[CSU]{Piotr Kokoszka}
\ead{piotr.kokoszka@colostate.edu}

\cortext[cor1]{Corresponding author.}
\address[CSU]{Colorado State University, Department of Statistics}

\begin{abstract}
Motivated by applications in functional data analysis, we study the partial sum process of sparsely observed, random functions. A key novelty of our analysis are bounds for the distributional distance between the limit Brownian motion and the entire partial sum process in the function space. To measure the distance between distributions, we employ the Prokhorov and Wasserstein metrics. We show that these bounds have important probabilistic implications, including strong invariance principles and new couplings between the partial sums and their Gaussian limits. Our results are formulated for weakly dependent, nonstationary time series in the Banach space of $d$-dimensional, continuous functions. Mathematically, our approach rests on a new, two-step proof strategy: First, using entropy bounds from empirical process theory, we replace the function-valued partial sum process by a high-dimensional discretization. Second, using Gaussian approximations for weakly dependent, high-dimensional vectors, we obtain bounds on the distance. As a statistical application of our coupling results, we validate an open-ended monitoring scheme for sparse functional data. Existing probabilistic tools were not appropriate for this task.
\end{abstract}

\begin{keyword}
coupling \sep functional time series \sep invariance principle \sep monitoring \sep sparse functional data \sep weak dependence
\MSC[2020] 62M10 \sep 62R10
\end{keyword}

\end{frontmatter}

\section{Introduction} \label{s:lit_inv}
Invariance principles have many statistical applications.
In the  framework of functional data analysis (FDA), they have
been particularly useful   in change point analysis
(e.g. \cite{aston:kirch:2012,gromenko:kokoszka:reimherr:2017,aue:rice:sonmez:2018}),
self-normalized inference  (e.g. \cite{kim:zhao:shao:2015},
\cite{dette:kokot:volgushev:2020}, \cite{dette:kutta}
and  goodness-of-fit and stationarity tests
(e.g.  \cite{bugni:2009,horvath:kokoszka:rice:2014,cuesta:2019}).

We  introduce a new approach to invariance principles for functional data.
First, we  provide finite sample bounds for the distance between the functional
partial sum process $P_N$ and its Gaussian limit $W$ in weak convergence
metrics. Our key theorems state that $d(P_N, W) = \mathcal{O}(N^{-\tau})$ for
a positive $\tau$, not merely that $P_N$ converges to $W$ in a suitable space.
We consider the Prokhorov and the Wasserstein metrics, both of
which imply strong coupling results for $(P_N,W)$. To the best of our
knowledge, such bounds do not exist for functional data and have only
recently been derived in some finite dimensional cases
(see, e.g. \cite{hafouta:2023}). Second, our theory
applies to  functional triangular arrays whose rows are only approximately
stationary. This more general setting is motivated by
applications to sparse functional data.
Third, in contrast to most results for  functional data,
our theory  holds for random functions defined on a
 growing and even unbounded domain.
Such results are useful, e.g., when considering time series of random densities.
Motivated by such applications,
our results are formulated for random functions in  the Banach space
of continuous functions, rather than a separable Hilbert space, as typical in
FDA. Fourth, our coupling results provide a new strategy to validate
open-ended monitoring schemes for functional data and even for nonstationary
random functions derived from sparse observations. This strategy is not confined to functional data and
represents a novel way to validate monitoring schemes even in the scalar case.
Yet, on function spaces it is particularly meaningful because
tools derived from  KMT approximations do not exist in infinite dimension.
We also investigate some probabilistic
implications of the coupling results, including a bounded law of the iterated
logarithm and a strong invariance principle for dependent functional time series.
Fifth, proof strategies for our main results, the bounds on weak
convergence metrics,
are new. We study the (function--valued) partial sum process $P_N$
on the space of bounded functions $L^\infty$. Here, we approximate
it by a discretization, which is essentially a step function in a high-dimensional
subspace. On this subspace, we exploit Gaussian approximations to obtain
the desired finite sample bounds.
Finally, even when considered as merely weak invariance principles
(without rates), our results make a useful contribution to the literature,
because they hold under very general assumptions regarding dependence,
stationarity and function spaces. We explain these aspects and their
relation to existing work in Section \ref{sec_lit}, and throughout the paper
once suitable definitions and results have been formulated.

The remainder of this paper is organized as follows.
In Section \ref{sec_lit},  we review related literature
on (particularly weak) invariance principles for functional data
and draw comparisons to our work. In Section \ref{sec_prelim},
we provide preliminaries for
our subsequent theory. Our main probabilistic results are gathered
in Section \ref{sec1}. We begin by developing a framework for sparse
functional data in Section \ref{sec_sparse}. In Section \ref{sec_inv},
we provide a finite sample bound on the
Prokhorov distance between the functional partial sum process and its
limit, which is our chief contribution.
In Section \ref{s:cons}, we  investigate
some mathematical consequences of this result. In Section \ref{sec_mon},
we present a new monitoring scheme for arrays of sparsely observed
functional data. Proofs of our main results and additional background
are provided in the online Supplementary Information.

\section{Discussion of related research} \label{sec_lit}
The relative usefulness of an invariance principle in Functional Data Analysis
depends on  assumptions regarding the function space, dependence
structure and robustness to departures from stationarity.
In the following, we discuss these criteria and review
how the related literature and this work address them. Since research
on invariance principles in function spaces
is very extensive, we have confined our discussion
to a selection of most closely related contributions.

\textbf{Function space} A fundamental decision in
FDA is in which space  the random functions are modeled.
The usual  choice is the Hilbert space $L^2([0,1])$,
chiefly due to  the availability of the  inner product
(e.g. \cite{bosq:2000}, \cite{HKbook}, \cite{hsing:eubank:2015}).
Up to an isomorphism,  it  is the unique separable
Hilbert space of infinite dimension.
Examples of weak invariance principles on separable Hilbert
spaces can be found in \cite{merlevede:2003},
\cite{berkes:horvath:rice:2013}, \cite{cuny:merlevede:2014}
or more recently in \cite{lu:wu:xiao:xu:2022}.
Hilbert spaces have the mathematical benefit of allowing
a close analogy to the multivariate setting - for instance
one can define and interpret principal components as
the directions of maximum variation. However, in  statistical applications
that involve  continuous functions, for example estimated densities,
the space   $L^2([0,1])$ is on the one hand too large because it
contains functions whose regularity is not restricted, on the other
hand it is too small because it contains only  functions with compact support.
Moreover, there are  a number of relevant functionals
that are continuous w.r.t. to the sup-norm but not w.r.t. the $L^2$-norm.
Examples include   point evaluations, the range and the integral of a function.
Invariance principles for continuous functions have been considered in
\cite{dette:kokot:aue} and in \cite{dette:kokot:2022}.
Besides these works that are tailored to functional data,
there also exists a literature on invariance principles for general separable
Banach spaces,
such as \cite{kuelbs:1973}, \cite{dehling:1983},
\cite{samur:1987}. While these results cover a variety of different
spaces, their application is usually challenging
(see \cite{dette:kokot:aue}). The reason  is that invariance
principles for Banach spaces involve high-level conditions on the tightness
of the partial sum process. These conditions have to be validated using
the specific geometry of the Banach space of interest. Therefore,
when focusing on FDA, results that are tailored to a specific space,
such as $\mathcal{C}([0,1], \mathbb{R})$,  are  preferable,
because simpler, low-level conditions can be formulated.
For this reason, we  consider in this work functional data
on spaces of continuous functions. We expand the existing literature,
by considering not only functional data in $\mathcal{C}(I, \mathbb{R})$
for a fixed, compact $I$, but also for an expanding domain and even
for $I=\mathbb{R}$, a case that is of interest e.g. in the study of
random probability densities which are often not compactly supported.
Both of these latter cases, to the best of our knowledge,
have not been considered so far.

\textbf{Dependence} When studying functional time series,
there are a number of ways to model dependence between  functions.
Examples include fast decaying mixing coefficients
(\cite{kuelbs:philipp:1980,dehling:1983,samur:1987}),
weak association of projections (\cite{burton:dabrowski:dehling:1986})
or approximability by a weakly dependent time series
(\cite{berkes:horvath:rice:2013}). In the case of a
separable Hilbert space, dependence can usually be stronger than for general
Banach spaces and a greater variety of dependence concepts have
been explored (see, e.g.
\cite{merlevede:2003}, \cite{cuny:merlevede:2014},
\cite{berkes:horvath:rice:2013}). For Banach spaces, most
results rely on either $\phi$-mixing
or $\beta$-mixing (see e.g. \cite{dehling:1983,samur:1987}).
However, in this work, we allow for stronger dependence than
usually considered in Banach spaces and quantify dependence
by polynomially decaying $\alpha$-mixing coefficients.
Recall that both $\phi-$ and $\beta-$mixing imply
$\alpha$-mixing (see \cite{bradley:2007}). As we discuss in Remark \ref{rem_mixing}, we show coupling results under $\alpha$-mixing, which require very different proof techniques than in the case of $\phi$- or $\beta$-mixing.  For $\phi$- and $\beta$-mixing, general coupling results exist even for abstract Polish spaces, which do not hold even for separable Hilbert spaces in the case of $\alpha$-mixing.

\textbf{Triangular arrays, sparse FDA and stationarity}
Our results are motivated by and applied to
functional data that may be sparsely observed. Statistically,
this means that a user wants to make inference for a sample of
functions, $X_1,...,X_N$, but has only access to imperfect
reconstructions $\widehat{X}_1,\ldots,\widehat{X}_N$.
A way to include sparsity is to formulate convergence results for a triangular
array of random functions, $\widehat{X}_1,\ldots,\widehat{X}_N$.
These reconstructions may change with $N$, reflecting state of the
art approaches that pool information from the whole sample.
While most related contributions consider time series of random functions,
 \cite{kuelbs:1973} and   \cite{samur:1987}
prove invariance principles for triangular arrays. Stationarity within each row is
however assumed. To model sparse functional data,
even a notion of weak stationarity can be unrealistic. Suppose that
the functions $X_1,...,X_N$ are strictly stationary,
but their estimates $\widehat{X}_1,\ldots,\widehat{X}_N$ are
based on imbalanced samples. In this case, the sparse
estimators might not even be weakly stationary and none
of the named results would be applicable. For this reason,
we develop theory for data that is only ``approximately,
weakly stationary", which, in particular, covers  estimators
computed from imbalanced
samples. We present a new and general framework that covers
commonly used  estimators, cf. Section \ref{sec1}, and
 most existing reconstruction methods
(e.g.  \cite{yao:muller:wang:2005JASA,zhang:wang:2016}).

\begin{table}[t]
    \centering
    \small
    \begin{tabular}{|p{1cm}|p{3.05cm}|p{2cm}|c|c|}
        \hline
        Paper & Function Space & Dependence & \makecell{$ \,\,\quad$ Data $ \,\,\quad$\\ structure } & Stationarity \\
        \hline\hline
        \cite{kuelbs:1973} & \cellcolor{yellow!25}\makecell{$ \,\,\quad$ separable $ \,\,\,\,\,\quad$\\ Banach space } & \cellcolor{red!25} independent & \cellcolor{green!25}\makecell{$ \,\,$ Triangular $ \,\,$\\ arrays }& \cellcolor{red!25}\makecell{Strict\\ Stationarity  $\,\,\,\,\,$} \\
        \hline
        \cite{kuelbs:philipp:1980} & \cellcolor{yellow!25}\makecell{$ \,\,\quad$ separable $ \,\,\,\,\,\quad$\\ Banach space } & \cellcolor{yellow!25} $\phi$-mixing & \cellcolor{red!25}Time Series & \cellcolor{yellow!25}\makecell{Weak \\ Stationarity  $\,\,\,\,\,$} \\
        \hline
        \cite{dehling:1983}& \cellcolor{yellow!25}\makecell{$ \,\,\quad$ separable $ \,\,\,\,\,\quad$\\ Banach space } & \cellcolor{yellow!25} \makecell{$\phi-$ and $\quad\,\,\,\,\,\,\,\,\,\,$\\ $\beta-$mixing $\quad\,\quad$ } & \cellcolor{red!25}Time Series & \cellcolor{yellow!25}\makecell{Weak\\ Stationarity $\,\,\,\,\,$} \\
        \hline
      \cite{burton:dabrowski:dehling:1986} & \cellcolor{red!25}\makecell{$ \,\,\quad$ separable $ \,\,\,\,\,\quad$\\  Hilbert space } & \cellcolor{yellow!25} weak assoc. & \cellcolor{red!25}Time series & \cellcolor{red!25} \makecell{Strict\\ Stationarity $\,\,\,$} \\
        \hline
         \cite{samur:1987}  & \cellcolor{yellow!25}\makecell{$ \,\,\quad$ separable $ \,\,\,\,\,\quad$\\ Banach space } & \cellcolor{yellow!25} $\phi-$mixing & \cellcolor{green!25}\makecell{$ \,\,$ Triangular $ \,\,$\\ arrays }& \cellcolor{yellow!25} \makecell{Weak\\ Stationarity  $\,\,\,$} \\
        \hline
         \cite{merlevede:2003} & \cellcolor{red!25}\makecell{$ \,\,\quad$ separable $ \,\,\,\,\,\quad$\\ Hilbert space } & \cellcolor{green!25} $\alpha-$mixing & \cellcolor{red!25}Time Series& \cellcolor{red!25} \makecell{Strict\\ Stationarity  $\,\,\,$} \\
        \hline
       \cite{berkes:horvath:rice:2013} & \cellcolor{red!25}\makecell{$ \,\,\quad$ separable $ \,\,\,\,\,\quad$\\ Hilbert space } & \cellcolor{green!25} \makecell{$m-$approx.\\ sequences} & \cellcolor{red!25} Time Series & \cellcolor{red!25} \makecell{Strict\\ Stationarity  $\,\,\,$} \\
        \hline
        \cite{dette:kokot:aue}  & \cellcolor{green!25}\makecell{$ \,\,\quad$ $\mathcal{C}(I, \mathbb{R})$ with $\,\,\,$\\ $I$ compact $\,\,$} & \cellcolor{yellow!25} $\phi-$mixing & \cellcolor{red!25} Time Series & \cellcolor{red!25} \makecell{Strict\\ Stationarity  $\,\,\,$} \\
        \hline
        \cite{lu:wu:xiao:xu:2022} & \cellcolor{red!25}\makecell{$ \,\,\quad$ separable $ \,\,\,\,\,\quad$\\ Hilbert space }  & \cellcolor{yellow!25} $\beta-$mixing & \cellcolor{red!25} Time Series & \cellcolor{red!25} \makecell{Strict\\ Stationarity  $\,\,\,$} \\
        \hline
        This work & \cellcolor{green!25}\makecell{$ $ $\mathcal{C}_0(I, \mathbb{R}^d)$ with\\ $I$ compact,\\
          $I$ expanding,\\ $ I=\mathbb{R}$  } & \cellcolor{green!25} $\alpha-$mixing &\cellcolor{green!25}\makecell{$ \,\,$ Triangular $ \,\,$\\ arrays }& \cellcolor{green!25}  \,\makecell{Approximate\\ weak \\
           Stationarity }\, \\
        \hline
    \end{tabular}
    \caption{Overview of some invariance principles applicable to functional data.}
    \label{tab:1}
\end{table}

\textbf{Comparison to existing work} In Table \ref{tab:1},
we provide an overview of some important papers  on invariance
principles for functional data. Evidently, such an overview is
bound to omit certain details and we only want to point out some of them.
First, a number of the cited works, such as
\cite{kuelbs:philipp:1980,dehling:1983,lu:wu:xiao:xu:2022}
-as well as this paper- contain strong invariance principles that
guarantee almost sure convergence of the partial sum process
on a suitably altered probability space. Approximations
in $p$-th moments are considered in \cite{cuny:merlevede:2014}.
Second, the dependence conditions are not always comparable.
For instance, \cite{dette:kokot:aue} and \cite{lu:wu:xiao:xu:2022}
employ exponentially decaying mixing coefficients, whereas
\cite{samur:1987}
and \cite{dehling:1983} use the same notions of mixing,
but with polynomially decaying mixing coefficients.

\cite{merlevede:2003} employs a non-standard definition of
$\alpha$-mixing that is more general than the usual definition.
Finally, to keep our discussion succinct we have left out a number
of related works. For instance, there exists an entire literature
on almost sure invariance principles with slower approximation
rates of order $o(\sqrt{n\log(\log(n))})$ that have uses in the
study of the limit superior of sequences (see \cite{dehling:philipp:1982},
\cite{dedecker:merlevede:2010}). The recent work of
\cite{mies:steland:2023} is also related as it studies Gaussian
approximations of the partial sum process of high dimensional scalar
observations which need not form a stationary
sequence (the dimension is finite,  but tends to infinity with the
sample size).  Since these results are not directly comparable to
the results in this work, we do not discuss them further.

\section{Preliminaries} \label{sec_prelim}

\subsection{Notation}\label{sec_not} $ $ 

\textbf{Interval notation} Throughout this work, we denote by $I, I_1, I_2$
non-empty subintervals of the real line that are either equal to $\mathbb{R}$ or compact. We will always assume that $I_2 \subset I_1$.

\textbf{Vector space notation}
We consider the vector space $(\mathbb{R}^d, |\cdot|)$ for some $d \ge 1$. For any dimension, $|\cdot|$ denotes the maximum norm. For a non-empty index set $\mathcal{I} \subset \{1,...,d\}$ and a vector $v \in \mathbb{R}^d$, we define the subvector with components in $\mathcal{I} $ as
\begin{equation} \label{e:subv}
v^{(\mathcal{I} )}:=(v[\ell])_{\ell \in \mathcal{I} }.
\end{equation}

\textbf{Space of continuous functions} By $\mathcal{C}(I, \mathbb{R}^d)$ we denote the space of bounded continuous functions $f:I \to \mathbb{R}^d$. Notice that any such function can be interpreted as a vector of continuous functions $f=(f[1],...,f[d])$ with $f[\ell] \in \mathcal{C}(I, \mathbb{R})$. A natural distance on $\mathcal{C}(I, \mathbb{R}^d)$ is given by the sup-norm
\[
\|f\|:= \sup_{u \in I} |f(u)|=\sup_{u \in I} \max_{1 \le \ell \le d}|f[\ell](u)|.
\]
For the special case where $I = \mathbb{R}$, we define the closed subspace $\mathcal{C}_0(I, \mathbb{R}^d)$ of $\mathcal{C}(I, \mathbb{R}^d)$, which consists of functions that vanish at $\pm \infty$, i.e.
\[
\lim_{y \to \infty}\sup_{|u|>y}|f(u)|=0.
\]
We often present unified results for the space of continuous functions
on a compact interval $I$  and for the space of functions
that vanish at infinity with $I=\mathbb{R}$,
and hence generically write $\mathcal{C}_0(I, \mathbb{R}^d)$.
We recall that, equipped with the sup-norm, $\mathcal{C}_0(I, \mathbb{R}^d)$
is a separable  Banach space.

\textbf{Notation for restrictions} If $I_2 \subset I_1$, we can 
naturally understand any function 
$f \in \mathcal{C}_0(I_1, \mathbb{R}^d)$ as a function in 
$\mathcal{C}_0(I_2, \mathbb{R}^d)$, by considering the restriction $\{(u,f(u)): u \in I_2\}$. We will often consider the restriction $f$ to $I_2$ and for parsimony of notation also refer to it by $f$. If $g \in \mathcal{C}_0(I_2, \mathbb{R}^d)$ we will thus understand the difference $g-f$ as the difference of $g$ and the restriction of $f$.

\subsection{Probability on function spaces} \label{sec23} $ $

\textbf{Random functions in $\mathcal{C}_0$} 
Let $(\Omega, \mathcal{A}, \mathbb{P})$ be a probability space.
We call a measurable map $X: \Omega \to \mathcal{C}_0(I, \mathbb{R}^d)$
a random function in $\mathcal{C}_0(I, \mathbb{R}^d)$.
Recall that the measurability of  $X$ is equivalent to the measurability
of all evaluations $X(u)$ for $u \in I$,
(see \cite{billingsley:1968}, p. 84).
Assuming $\mathbb{E}\|X\|<\infty$, we define its mean function $\mu \in \mathcal{C}_0(I, \mathbb{R}^d)$ by the pointwise identity
\[
\mathbb{E}X[\ell](u)=\mu[\ell](u), \ \ \ u \in I, \ \ 1 \le \ell \le d.
\]
Moreover, if $\mathbb{E}\|X\|^2<\infty$,
we  define its continuous covariance kernel $c$ for all $u,v \in I, \, 1 \le \ell_1, \ell_2 \le d$ by
\[
c[\ell_1, \ell_2](u,v):= \mathbb{E}[(X[\ell_1](u)-\mu[\ell_1](u))(X[\ell_2](v)-\mu[\ell_2](v))].
\]
For details on continuous random functions,
 we refer the reader to \cite{janson:kaijser:2015}.

\textbf{Bounded functions} A second important function space in this work is the space of bounded functions. Let $\tilde I$ be an index set that is not necessarily an interval.  We define the space of bounded functions
\[
L^\infty(\tilde I, \mathbb{R}^d):=\{f: \tilde I \to \mathbb{R}^d: \|f\|<\infty\},
\]
where $\|f\|:= \sup_{x \in \tilde I}|f(x)|$.
If $\tilde I \subset I$, $L^\infty(\tilde I, \mathbb{R}^d)$
can be used to describe discretizations of functions in
$\mathcal{C}_0(I, \mathbb{R}^d)$.
A further important use will be
$L^\infty([0,1] \times I, \mathbb{R}^d)$, where a partial sum process (which is not continuous) of functions in $\mathcal{C}_0(I, \mathbb{R}^d)$ can be defined. Another process that takes values in $L^\infty([0,1] \times I, \mathbb{R}^d)$ is the Brownian motion with sample paths in $\mathcal{C}_0(I, \mathbb{R}^d)$.

\textbf{The Banach-valued Brownian motion} In order to describe  our limiting distributions,  we employ the Brownian motion in  $\mathcal{C}_0(I, \mathbb{R}^d)$. This Brownian motion $\{W(x): x \in [0,1] \}$ can be understood as a stochastic process with continuous sample paths in $\mathcal{C}_0(I, \mathbb{R}^d)$ that has stationary, independent and normally distributed increments. It is characterized by its mean $\mathbb{E}W(1) \in \mathcal{C}_0(I, \mathbb{R}^d)$ (in our case, always the constant zero function) and the covariance of $W(1)$. We can also understand $W$ as a random variable in the space $L^\infty([0,1] \times I, \mathbb{R}^d)$ on which we will formulate our weak invariance principles.
It is possible to prove existence and uniqueness of the Brownian motion in Banach spaces similarly as for the finite dimensional Brownian motion. For details we refer to \cite{kuelbs:1973} and references therein.

\textbf{The Prokhorov metric}
Let  $(\mathcal{M}, d_\mathcal{M})$ be a generic metric space.
We define for a set $A \subset \mathcal{M}$ and $\chi>0$ the $\chi$-environment
\[
A^\chi:=\{x \in \mathcal{M}|\,\, \exists a \in A: d_\mathcal{M}(x,a) <\chi\}.
\]
We call a set $A$ measurable if it is a Borel set in $\mathcal{M}$ w.r.t. to $d_\mathcal{M}$.
Now we can define the distance of two probability measures $\mathbb{P}_1, \mathbb{P}_2$ as
\begin{align} \label{e:def_Prokhorov}
\pi(\mathbb{P}_1,\mathbb{P}_2) =&\inf \{\chi > 0 : \mathbb{P}_1(A) \leq \mathbb{P}_2(A^\chi) + \chi, \,\, \mathbb{P}_2(A) \leq \mathbb{P}_1(A^\chi) + \chi \\
& \qquad\qquad\qquad\qquad\qquad\qquad\qquad\quad\text{ for all measurable } A\}.\nonumber 
\end{align}
Notice that in this definition, the value of $\pi$ depends
on the metric $d_\mathcal{M}$ and even two metrics
that create the same topology might lead to different values of $\pi$.
In order to clarify which metric is used, we will subsequently denote by
$\pi_\infty$ the Prokhorov metric w.r.t. the supremum norm. Finally,
we point out that while the Prokhorov metric is defined on any metric
space, it metricizes weak convergence on separable, complete metric spaces.
We will come back to this detail, when discussing coupling results
for stochastic processes in Section \ref{s:cons}.

\textbf{The Wasserstein metric} A second metric on probability
measures studied in this work is the Wasserstein metric.
For a general introduction to this metric and its properties we refer
 to \cite{panaretos:zemel:2020}.
Consider a complete, separable metric space
$(\mathcal{M}, d_\mathcal{M})$ with two probability measures
$\mathbb{P}_1, \mathbb{P}_2$ defined on it and
let $\Theta(\mathbb{P}_1,\mathbb{P}_2)$ be the set of
all couplings of $\mathbb{P}_1$ and $ \mathbb{P}_2$.
More precisely, $\Theta(\mathbb{P}_1,\mathbb{P}_2)$
is the set of all probability measures $\theta$ on the product
space $\mathcal{M} \times \mathcal{M}$ with marginals
$\mathbb{P}_1, \mathbb{P}_2$. Then, the Wasserstein
distance of $\mathbb{P}_1, \mathbb{P}_2$ is for any $q \ge 1$ defined as
\begin{equation} \label{e:wm}
\mathcal{W}_q(\mathbb{P}_1,\mathbb{P}_2):= \inf_{\theta \in \Theta(\mathbb{P}_1,\mathbb{P}_2)}
 \big\{\mathbb{E}_{(x,y) \sim \theta}\big[ d_{\mathcal{M}}^q(x,y)\big]\big\}^{1/q}.
\end{equation}
As in the case of the Prokhorov metric, we define for two random variables $X \sim \mathbb{P}_1, Y \sim \mathbb{P}_2$
\[
\mathcal{W}_q(X,Y):= \mathcal{W}_q(\mathbb{P}_1,\mathbb{P}_2).
\]
We point out that (following   \cite{gibbs:su:2002})
the Wasserstein metric upper bounds the squared Prokhorov metric:
\[
\pi(\mathbb{P}_1,\mathbb{P}_2)^2 \le \mathcal{W}_q(\mathbb{P}_1,\mathbb{P}_2).
\]

\section{An invariance principle for sparse functional data} \label{sec1}

\subsection{A framework for sparse functional data } \label{sec_sparse}
The framework used in this paper is motivated by
applications to sparse functional data and the absence
of suitable probabilistic tools that could be used in several statistical
problems for such data. Recall the convention that $I_1, I_2$ denote
non-empty intervals that are each either compact
or equal to $\mathbb{R}$ with $I_2 \subset I_1$.
Let $(X_n)_{n \in \mathbb{N}}$ be a weakly stationary
time series of random functions in $\mathcal{C}_0(I_1, \mathbb{R}^d)$.
To model estimators of these functions, we consider an array
of  random variables $\{\delta_{n,M}: n, M \in \mathbb{N}\}$,
where for any $n$ the variable $ \delta_{n,M}$ takes values in a
metric space $\mathcal{S}_M$. Moreover, for any $M\in \mathbb{N}$
we define the deterministic, measurable map
\begin{align} \label{e:rest}
\mathscr{A}_M: \mathcal{C}_0(I_1, \mathbb{R}^d) \times \mathcal{S}_M \to \mathcal{C}_0(I_2, \mathbb{R}^d),
\end{align}
and therewith the \textit{sparse estimator} $X_{n,M}$ of $X_n$ as
\begin{align} \label{e:spest}
X_{n,M} := \mathscr{A}_M(X_n, \delta_{n,M}).
\end{align}
It is useful think of $X_{n,M}$ as an observable approximation to
the sparsely (incompletely) observed $X_n$ which may be defined
on a smaller subinterval $I_2$. To fix attention, one can think of
$X_n$ as a density on $I_1 = \mathbb{R}$ which is  estimated on
a subinterval $I_2$. Even if the sequence of random densities
is stationary, the estimators  $X_{n,M}$,  based on a finite number of
points for each $n$, may no longer form a stationary sequence.
More illustrations  are given in Example \ref{ex1}.
In general, we are often interested in the behavior of the
\textit{latent functions} $X_1,X_2,\ldots$, but we are only able
to construct the sparse estimators $X_{1,M}, X_{2,M},\ldots$,
where the random variables $\delta_{1,M}, \delta_{2,M},\ldots$
model \textit{distortions} of the ground truth.
The case of of completely observed  functions $X_1, X_2,...$ is
covered by our theory, by setting  $\mathscr{A}(X_n, \delta_{n,M})=X_n$.

\begin{example} \label{ex1} We discuss three examples of the
 above framework:

\textbf{i) Non-parametric smoothing} Suppose  that $I_1= \mathbb{R}$
and that $(X_n)_{n \in \mathbb{N}}$ is a time series in
$\mathcal{C}_0(\mathbb{R},\mathbb{R})$.
Assume that for each $n$, we observe
\[
y_{n,m}^{(M)} = X_n(u_{n,m}^{(M)})+\epsilon_{n,m}^{(M)},
\quad m=1,\ldots,M,
\]
where $(u_{n,m}^{(M)})_{n,m,M \in \mathbb{N}}$ are i.i.d. design points, distributed according to a
        continuous probability density $f:\mathbb{R} \to (0, \infty)$, and
        $(\epsilon_{n,m}^{(M)})_{n,m,M \in \mathbb{N}}$ are i.i.d. errors following a normal distribution $\mathcal{N}(0, \sigma^2)$. 
        We assume that the arrays  $(X_n)_{n \in \mathbb{N}}$, $(u_{n,m}^{(M)})_{n,m,M \in \mathbb{N}}$, $(\epsilon_{n,m}^{(M)})_{n,m,M \in \mathbb{N}}$ are independent of each other. Next, we define the distortion
        \[
        \delta_{n,M}:=(u_{n,m}^{(M)},\epsilon_{n,m}^{(M)})_{m=1,\ldots,M},
        \]
        which is a random variable on the space $\mathcal{S}_M:= \mathbb{R}^{2M}$.
        In principle, a variety of non-parametric estimators can be used to define $ \mathscr{A}_M$. For example, we can consider the Nadaraya–Watson estimator.
        Therefore, let $K$ be a univariate probability density function and denote for a bandwidth $h>0$,
        \[
        K_h(u) := \frac{1}{h} K\bigg(\frac{u}{h}\bigg).
        \]
        Then we define for any $n \in \mathbb{N}$ the sparse estimator
        \[
       X_{n,M}(u):= \mathscr{A}_M(X_n, \delta_{n,M})(u) := \frac{\sum_{m=1}^M K_{h}(u-u_{n,m}^{(M)})y_{n,m}^{(M)}}{\sum_{m=1}^M K_{h}(u-u_{n,m}^{(M)})}.
        \]
       Here, the bandwidth $h=h_{M,n}$ may depend on $M$ and $n$. Notice that then
        \[
        \mathscr{A}_M: \mathcal{C}_0(\mathbb{R}, \mathbb{R}) \times \mathbb{R}^{2M}\to \mathcal{C}(I_2, \mathbb{R}),
        \]
        where $I_2$ can be any compact interval over which the estimators $X_{n,M}$ are computed.

\textbf{ii) Observations on a grid} We consider a second setup
        with both intervals being compact $I_1=I_2=[0,1]$ and $(X_n)_{n \in \mathbb{N}}$ a time series in $\mathcal{C}([0,1],\mathbb{R})$. We assume that we observe these objects on a grid of size $M+1$, i.e. we have for each $n$ the data sample
        \[
        \{(m/M, X_n(m/M)): 0 \le m \le M\}.
        \]
        In this case, we can formally  define the deterministic distortion $\delta_{n,M} = (0, 1/M, 2/M,\ldots,1) \in \mathbb{R}^{M+1}$ and the approximation map
        \[
        \mathscr{A}_M: \mathcal{C}_0([0,1],\mathbb{R}) \times \mathbb{R}^{M+1} \to \mathcal{C}_0([0,1],\mathbb{R}),
        \]
        of a function $g$ as the linear interpolation of the points
        \[
        \{(m/M, g(m/M)): 0 \le m \le M\}.
        \]
        It is easy to show that $\mathscr{A}_M$ is measurable and hence the sparse estimators $X_{n,M}$ are well-defined.

\textbf{iii) Kernel density estimation} We consider the case where
$I_1=\mathbb{R}$  and  $(X_n)_{n \in \mathbb{N}}$
is a stationary time series of random probability densities in $\mathcal{C}_0(\mathbb{R},\mathbb{R})$. For a random sample size $D_{M,n}$ we now consider the i.i.d. observations
\begin{align} \label{e:sampling}
        u_{n,1},\ldots.,u_{n,D_{M,n}} \overset{i.i.d.}{\sim} X_n
\end{align}
for every $n \in \mathbb{N}$. To make this model mathematically precise,
we will assume that the points $(u_{n,d})_{n,d \in \mathbb{N}}$
are independent conditionally on the $(X_n)_{n \in \mathbb{N}}$.
The time series of random sample sizes $(D_{M,n})_{n \in \mathbb{N}}$
is independent of everything else and not necessarily stationary.
 This models  imbalanced observational studies, e.g. voluntary
viral tests,  where
the numbers of subjects may vary significantly from week to week.
The distortions are defined as
        \[
        \delta_{n,M} :=(D_{M,n}, u_{n,1},u_{n,2},\ldots,u_{n,D_{M,n}},0,0,\ldots) \in \ell^\infty(\mathbb{N}),
        \]
        in $\ell^\infty(\mathbb{N})$ the space of bounded sequences.
        We then define $X_{n,M}$ as the kernel density estimator
\[
\mathscr{A}(X_n, \delta_{n,M}):=X_{n,M}(u)
:=\frac{1}{D_{M,n} h}\sum_{d=1}^{D_{M,n}}
K\bigg( \frac{u-u_{n,d}}{h}\bigg), \ \ \  u\in I_2,
\]
where, as before, $K$ is a continuous probability density
and $h$ a bandwidth that may depend on $D_{M,n}$.
\end{example}

\begin{remark} \label{rem:dep} We make a few observations:
\begin{itemize}
        \item[i)] While the underlying time series $(X_n)_{n \in \mathbb{N}}$ is assumed to be  weakly stationary, this is not necessarily true of the derived, sparse estimators $(X_{n,M})_{n \in \mathbb{N}}$, since the distortions do not need to be stationary (see, e.g., Example \ref{ex1}, part iii)). However, if $X_{n,M} \approx X_{n}$ for sufficiently large $M$ the sparse estimators will  be “approximately stationary” (see also eq.  \eqref{e:apprst} below).
        \item[ii)] Given their representation in \eqref{e:spest},
        it is intuitively clear, that the sparse estimators are only
        as dependent as the original time series $(X_n)_{n \in \mathbb{N}}$
        and the time series of distortions $(\delta_{n,M})_{n \in \mathbb{N}}$.
        More rigorously, if we impose jointly on these time series
        $\phi-, \beta-$ or $\alpha-$mixing conditions,
        this mixing property will be inherited by the sparse estimators.
        This is one reason why it is more convenient to work under mixing
        conditions in our setting rather than with moment conditions imposed
        on Bernoulli shifts (see, e.g.,  \cite{wu:2007,hormann:kokoszka:2010}).
\end{itemize}
\end{remark}

\subsection{A bound on the Prokhorov distance} \label{sec_inv}

We are interested in deriving a weak invariance principle for an array
of $N$ sparse estimators $X_{1,M},\ldots,X_{N,M}$. Here,
the term ``sparse estimators" is to be understood  in the sense of the previous section and we notice that in particular the fully observed case where $X_{n,M}=X_n$ is covered by it.  To formulate our asymptotics, we will consider $N \to \infty$ and assume that simultaneously
$
M = M_N \to \infty $.  Hence, we study the partial sum process
\begin{align} \label{e:partials}
P_{N}(\lambda,u):= \frac{1}{\sqrt{N}} \sum_{i=1}^{\lfloor \lambda N \rfloor} \big\{X_{i,M_N}(u)- \mathbb{E}[X_{i,M_N}(u)]\big\}, \quad \lambda \in [0,1], \,\, u \in I_2
\end{align}
that takes values  in the space $L^\infty([0,1] \times I_2, \mathbb{R}^d)$.
The double asymptotic $N \to \infty, M_N \to \infty$ means
that as the number of functions $N$ gets larger, the quality
of the individual estimator $X_{i,M_N}$ increases.  This reflects
broadly used functional data reconstruction
approaches that utilize the whole functional sample
rather than each curve  separately, see \cite{chen:muller:2012}
who provide numerous references. 

We now formulate the mathematical assumptions for our results.
The notion of $\alpha$-mixing is well-known, but it is
reviewed in Section  \ref{sec_details} of Supplementary Information.

\begin{ass} $ $ \label{ass_1}
    \begin{itemize}
        \item[i)] (Dependence) The time series $(X_n)_{n \in \mathbb{N}}$ 
        is weakly stationary  in $\mathcal{C}_0(I_1, \mathbb{R}^d)$. 
        Moreover, for any $M \in \mathbb{N}$ the time series $(X_n, \delta_{n,M})_{n \in \mathbb{N}}$ is $\alpha$-mixing, with mixing coefficients denoted by $\alpha_{X,\delta,M}$. There exist two numbers $C_1, \nu>0$, such that for all $n \in \mathbb{N}$
        \[
        \sup_M \alpha_{X,\delta,M}(n) \le C_1 n^{-\nu}.
        \]
        \item[ii)] (Smoothness \& Moments)
        Let  $J >4$ and $\xi>0$ be constants that satisfy
        \[
        3+\frac{9}{J-4}<\nu, \qquad \frac{1}{2}<\xi.
        \]
        It holds for some $C_2>0$ that
        \[
        \sup_n \mathbb{E}\|X_n\|^{J}\le C_2, \quad \sup_n \sup_{u \neq v} \mathbb{E}\bigg(\frac{|X_n(u)-X_n(v)|}{|u-v|^\xi}\bigg)^{J} \le C_2.
        \]
        \item[iii)] (Approximation) There exist $C_3>0$,  $\gamma >0$ such that
        \begin{align*}
            & \sup_{n}\mathbb{E} \|X_n-X_{n,M_N}\|^{J} \le C_3 N^{-\gamma},\\
            &   \sup_{n,N}\sup_{u \neq v} \mathbb{E}\bigg(\frac{|X_{n,M_N}(u)-X_{n,M_N}(v)|}{|u-v|^\xi}\bigg)^{J} \le C_3.
        \end{align*}
    \end{itemize}
\end{ass}
We briefly discuss Conditions i)-iii).
Condition i) postulates weak stationarity of the original time series
and ensures that neither the random functions, nor the distortions,
are too strongly dependent. Mixing  conditions are well suited for our framework,
as explained in Remark \ref{rem:dep} (ii).
The assumption of the polynomial $\alpha$-mixing rate
is quite weak, as invariance principles in Banach spaces usually
require the more restrictive concepts of $\phi$- or $\beta$-mixing;
see Table \ref{tab:1}. Condition ii) ensures smoothness and moments
of the latent functions and is typical in the study of random continuous
functions (see, e.g., \cite{dette:kokot:aue}, \cite{berger:hermann:holzmann:2023} who use related conditions).
These conditions are later required to prove asymptotic equicontinuity
of the partial sum process. We notice that stronger moment assumptions
(larger $J$) permit stronger dependence (smaller $\nu$).
Condition iii) implies that the sparse estimators are uniformly
approximating the latent functions, ensuring that the partial sum
process of sparse estimators behaves asymptotically like the one of
latent functions. An important implication of this is that the sparse
estimators are \textit{approximately weakly stationary} in the sense
that for any $\ell \ge 0$
\begin{align} \label{e:apprst}
\max_{n} \sup_{u,v \in I_2}\big|\mathbb{C}ov(X_{n,M_N}(u),X_{n+\ell,M_N}(v))-\mathbb{C}ov(X_{1}(u),X_{\ell+1}(v))\big|=o(1).
\end{align}
The second part of Condition iii) implies that the sparse estimators
are also smooth in the H{\"o}lder-sense. It is sufficient to take the supremum over all $N \ge N_0$ for some fixed $N_0 \in \mathbb{N}$ in this condition.
Typically, smoothness of the sparse estimators follows from
the specific estimation technique combined with smoothness
of the latent functions. Illustrations based  on
Example \ref{ex1} are provided in Section \ref{sec_ex} of the
Supplementary Information.

We now specify assumptions on the interval $I_2$ on which the
functions $X_{n, M}$ are defined.
We consider three cases: A fixed indexing interval, an expanding one
and an infinite one. In the expanding scenario,
we allow the growth of the interval $I_2$ to be at a sufficiently slow,
polynomial speed in $N$, where the precise rate depends on the constants
in Assumption \ref{ass_1}. In the infinite domain case,
we assume that the tails of the random functions are exponentially decaying.

\begin{ass} \label{ass_1b}
One of the following three scenarios holds:
    \begin{itemize}
    \item[i)] (Fixed domain condition) $I_2$ is a fixed, compact interval. 
\item[ii)] (Increasing domain condition) It holds that $I_1=\mathbb{R}$ and the interval
        $I_2=I_2(N)$ depends on $N$ and is monotonically increasing, i.e., $I_2(N) \subset I_2(N+1)$. There exists a sufficiently small $\rho>0$ such that $|I_2(N)| \le C_4 N^{\rho}$. The constant $\rho$ depends only on the constants $C_i, J, \nu, \xi, \gamma$ from Assumption \ref{ass_1}. 
\item[iii)] (Infinite domain condition)  It holds that $I_1=I_2=\mathbb{R}$.
        There exist constants $\kappa, C_5>0$  such that for any $y>0$
        \begin{align*}
         \sup_n \mathbb{E} \sup_{|u|>y} |X_n(u)|^2 \le &C_5 \exp(-y^{\kappa}), \\
         \sup_{n,M}\mathbb{E} \sup_{|u|>y} |X_{n,M}(u)|^2 \le &C_5 \exp(-y^{\kappa}).
        \end{align*}
\end{itemize}
\end{ass}

To state our main result,
we define the long-run variance kernel of the latent functions restricted
to $I_2$. For $u,v \in I_2$ and $1 \le k, \ell \le d$, set
\begin{align} \label{e:lr_var}
c[k,\ell](u,v) := & \mathbb{E}\bigg[\Big(X_1[k](u) -\mathbb{E}[X_1[k](u)]\Big)\Big(X_1[\ell](v) -\mathbb{E}[X_1[\ell](v)]\Big)\bigg]\\
&  +2 \sum_{n \ge 2} \mathbb{E}\bigg[\Big(X_1[k](u) -\mathbb{E}[X_1[k](u)]\Big)\Big(X_n[\ell](v) -\mathbb{E}[X_n[\ell](v)]\Big)\bigg]. \nonumber
\end{align}
 Recall that $\pi_\infty$ is the Prokhorov metric defined by
\eqref{e:def_Prokhorov} with
$\mathcal{M} = \mathcal{C}_0(I_2,\mathbb{R}^d)$ and
$d_{\mathcal{M}}$ defined by the supremum norm.

\begin{theorem} \label{thm1}
Suppose that Assumptions \ref{ass_1} and \ref{ass_1b} hold.
Let $W$ be the centered Brownian motion with sample paths in
$\mathcal{C}_0(I_2, \mathbb{R}^d)$, defined by the covariance \eqref{e:lr_var}. Then, there exist some $C, \tau>0$ such that
 \begin{align}\label{e:weak:thm1}
 \pi_\infty\big(P_{N}, W \big) \le C N^{-\tau}.
\end{align}
The constants $C, \tau$ in this theorem depend only on the constants $C_i, J, \nu$, $\xi, \gamma, \rho, \kappa$ and $d$.
 \end{theorem}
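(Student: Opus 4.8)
The plan is to control $\pi_\infty(P_N,W)$ through the triangle inequality after inserting two intermediate processes, repeatedly using the standard fact (Strassen/Dudley) that if two random elements of a separable metric space admit a coupling with $\mathbb{P}(d(U,V)>\varepsilon)\le\varepsilon$, then $\pi(\mathcal{L}(U),\mathcal{L}(V))\le\varepsilon$. The first reduction replaces the sparse estimators by the latent functions: writing $\tilde P_N$ for the partial sum process of the centered latent functions $X_i-\mathbb{E}X_i$, the difference $P_N-\tilde P_N$ is the normalized partial sum process of the centered, $\alpha$-mixing arrays $D_{i}:=(X_{i,M_N}-\mathbb{E}X_{i,M_N})-(X_i-\mathbb{E}X_i)$, which by Assumption \ref{ass_1}(iii) satisfy $\sup_n(\mathbb{E}\|D_n\|^{J})^{1/J}=\mathcal{O}(N^{-\gamma/J})$ together with a uniform H\"older bound. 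A maximal inequality for weakly dependent, H\"older-smooth summands --- obtained by chaining over $u\in I_2$ (finite-dimensional, hence polynomial covering numbers) and a moment bound over the time index --- then gives $\mathbb{E}\|P_N-\tilde P_N\|=\mathcal{O}(N^{-\gamma/J}\,\mathrm{poly})$ and hence $\pi_\infty(P_N,\tilde P_N)=\mathcal{O}(N^{-\tau_1})$. It is essential to exploit cancellation here: the crude bound $\|P_N-\tilde P_N\|\le N^{-1/2}\sum_i\|D_i\|$ loses a factor $\sqrt N$ and is too weak.

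Next I would discretize in the functional argument only, since the $\lambda$-direction is handled automatically by the partial-sum structure. Fix a grid $u_1<\cdots<u_K$ in $I_2$ of mesh $\delta_N$; in the infinite-domain case $I_2=\mathbb{R}$ I first truncate to $|u|\le y_N$ and absorb the tail using the exponential bound in Assumption \ref{ass_1b}(iii), while in the expanding case the number of points is $K\asymp |I_2(N)|/\delta_N\asymp N^{\rho}/\delta_N$. Let $\tilde P_N^{\circ}$ be the $\lambda$-indexed partial sum process of the $dK$-dimensional, weakly dependent, approximately stationary vectors $(X_i(u_j)-\mathbb{E}X_i(u_j))_{j\le K}$, and $W^{\circ}$ the Gaussian process obtained by sampling $W$ on the same grid, with covariance \eqref{e:lr_var} restricted to the grid. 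Using the H\"older moment bound of Assumption \ref{ass_1}(ii) and once more a maximal inequality, the oscillation of the latent partial sum process between neighbouring grid points is of order $\delta_N^{\xi'}$ for some $\xi'>0$ up to polynomial factors, yielding $\pi_\infty(\tilde P_N,\tilde P_N^{\circ})=\mathcal{O}(\delta_N^{\xi'}\,\mathrm{poly})$; the same argument applied to the Gaussian limit, whose paths are a.s. continuous in $u$, gives $\pi_\infty(W^{\circ},W)=\mathcal{O}(\delta_N^{\xi'}\,\mathrm{poly})$.

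The crux is the third step: a high-dimensional Gaussian approximation for the partial sum process of $\alpha$-mixing vectors, used to couple $\tilde P_N^{\circ}$ with $W^{\circ}$ on one probability space so that $\mathbb{P}(\|\tilde P_N^{\circ}-W^{\circ}\|>\varepsilon)\le\varepsilon$ for an $\varepsilon$ that is polynomially small in $N$ but deteriorates with the dimension $dK$. By the coupling fact this bounds $\pi_\infty(\tilde P_N^{\circ},W^{\circ})$. Collecting the four contributions by the triangle inequality, $\pi_\infty(P_N,W)$ is dominated by a discretization error decreasing in $K$ (equivalently in $\delta_N^{\xi'}$) plus a Gaussian-approximation error increasing in $K$; choosing $\delta_N$ (and $y_N$, and constraining $\rho$) to balance the two produces the claimed bound $CN^{-\tau}$.

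I expect the main obstacle to be this final step together with the balancing. One needs a Gaussian coupling whose rate carries an explicit, tracked dependence on the growing dimension $dK$, for merely $\alpha$-mixing (not $\phi$- or $\beta$-mixing) sequences, and in the $\ell^\infty$/sup-over-grid sense demanded by $\pi_\infty$; moreover the rows are only approximately stationary, so off-the-shelf stationary results cannot be quoted and the approximate stationarity \eqref{e:apprst} must be propagated through the coupling and into the long-run covariance \eqref{e:lr_var}. The tension between the admissible dimension growth (hence $\delta_N$, the domain exponent $\rho$, and the tail exponent $\kappa$) and the dependence/moment exponents $\nu,J,\xi,\gamma$ is exactly what dictates the quantitative constraints in Assumption \ref{ass_1} and fixes the exponent $\tau$.
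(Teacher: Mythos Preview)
Your high-level two-step strategy (discretize, then apply a high-dimensional Gaussian approximation with tracked dependence on the dimension, and balance) is exactly the paper's. There are, however, two genuine differences worth flagging.

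First, and most importantly, the paper discretizes \emph{both} arguments, not only the functional one. It fixes a grid $\mathcal{G}_N$ on $[0,1]\times[-N^\rho,N^\rho]$ of size $|\mathcal{G}_N|\le CN^{\rho+2\varsigma}$ and works with the piecewise-constant process $P_N^{dis}$ (zero outside $[-N^\rho,N^\rho]$). The payoff is that the Gaussian step becomes a \emph{static} vector CLT: one writes $\vec{v}_i[d(\lambda,u)]=X_{i,M_N}(u)\mathbb{I}\{i\le\lfloor\lambda N\rfloor\}$ and bounds $\pi_\infty\big(N^{-1/2}\sum_i\vec v_i,\mathcal N(0,\Sigma_N)\big)$ by a Prokhorov-metric Berry--Esseen for $\alpha$-mixing vectors (Theorem~3.27 of Dehling--Mikosch--S{\o}rensen), which delivers the rate $CN^{-1/20}|\mathcal G_N|^3$. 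This completely sidesteps the obstacle you correctly identify: a process-level coupling of the $\lambda$-indexed partial sums with a $dK$-dimensional Brownian motion under $\alpha$-mixing, with explicit dimension dependence, is not available off the shelf (the paper itself notes that the relevant couplings exist for $\phi$- and $\beta$-mixing but break down for $\alpha$-mixing in infinite dimension). Your statement that ``the $\lambda$-direction is handled automatically by the partial-sum structure'' is thus the gap; the paper's fix is to discretize $\lambda$ too.

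Second, the paper does \emph{not} first replace the sparse estimators by the latent functions. It keeps $P_N$ throughout and absorbs the approximate stationarity \emph{inside} the Gaussian step: after obtaining $\pi_\infty(P_N^{dis},\mathcal N(0,\Sigma_N))\le CN^{-\tau}$, it shows $|\Sigma_N-\tilde\Sigma_N|_\infty\le CN^{-\gamma_2}$ (this is where Assumption~\ref{ass_1}(iii) and weak stationarity of $X_n$ enter), and then bounds $\pi_\infty(\mathcal N(0,\Sigma_N),\mathcal N(0,\tilde\Sigma_N))$ via the Wasserstein distance between two centered Gaussians and a Powers--St{\o}rmer square-root inequality. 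Your preliminary reduction $P_N\to\tilde P_N$ is a legitimate alternative, but note that the chaining you propose for $\|P_N-\tilde P_N\|$ must use that $\mathbb E|D_i(u)-D_i(v)|^J\le\min(C|u-v|^{J\xi},CN^{-\gamma})$, i.e.\ that the variance diameter of the difference process is $O(N^{-\gamma/J})$; the H\"older constant alone is $O(1)$ and does not by itself make the sup small.
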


Theorem \ref{thm1} advances research on invariance principles in function
spaces in  the following ways: It gives a  finite sample bound on
the distributional distance between $P_N$ and $W$. 
As a consequence,
we obtain in the next sections new coupling results with important
applications to statistical theory.  As far as we can tell,
such applications cannot be derived from traditional
weak convergence results.
But even as a mere weak convergence result,
Theorem \ref{thm1} makes a valuable contribution, c.f. Table~\ref{tab:1}.
There, we point out that our invariance principle operates
under stronger dependence, less restrictions on the domain of the functions and weaker stationarity restrictions than related works. Finally, we notice that the constants $C, \tau$ in the Theorem are not bounded explicitly, and such bounds are not required for our subsequent probabilistic and statistical applications, where polynomial decay is sufficient.

\medskip

\noindent{\sc outline of the proof  of Theorem
\ref{thm1}}:  The proof  is quite involved,
and hence we only provide an outline of the most important steps here,  and
present a complete, rigorous proof in Section \ref{AppA}
of the Supplementary Information. We confine ourselves to the
case of $I_2 =\mathbb{R}$,  which exhibits most challenges,  and set $d=1$.

\textbf{Preliminaries} The proof rests on some definitions
that we will here only heuristically introduce.
First, there exists a constant $\rho>0$ that helps
us  decompose the index set of $P_N$ into two parts
\[
[0,1]\times \mathbb{R} = \big([0,1]\times [-N^\rho, N^\rho] \big) \cup \big([0,1]\times [-N^\rho, N^\rho]^c\big).
\]
Second, there is a constant $\varsigma>0$ that we use to define a
grid $\mathcal{G}_N$ on $[0,1]\times [-N^\rho, N^\rho]$ with any
point in $[0,1]\times [-N^\rho, N^\rho]$  having a distance
$\le N^{-\varsigma}$ to the closest gridpoint.
This is possible with a total of
$|\mathcal{G}_N|\le C N^{\rho+2\varsigma}$ points.
For a pair of coordinates $(\lambda, u)$,  we refer to the closest
gridpoint by $(\lambda_{\mathcal{G}_N}, u_{\mathcal{G}_N})$
(ambiguities in the case of multiple closest points are ignored in this outline).
Third, we define the discretization of a function
$f \in L^\infty([0,1]\times \mathbb{R}, \mathbb{R})$,
called $f^{dis}$,  essentially by setting
\[
f^{dis}(\lambda, u):=
\begin{cases}
 f(\lambda_{\mathcal{G}_N}, u_{\mathcal{G}_N}),\,\,\quad |u|\le N^\rho,\\
    0, \qquad\qquad\quad \quad |u|> N^\rho.
\end{cases}
\]
Notice that the subspace of discretizations
 \[
    Dis(N):= \{f^{dis}: f \in L^\infty([0,1]\times \mathbb{R}, \mathbb{R})\}
\]
is finite dimensional with dimension $|\mathcal{G}_N|$.
The dimension grows with $N$, but only slowly as in the
proof $\rho$ and $\varsigma$
are small numbers with $\rho<<\varsigma$.

\textbf{The proof} The main idea of the proof is the following:
Get from the process $P_N$ to the finite dimensional version
$P_N^{dis}$. Use a finite sample Gaussian approximation
on the space $Dis(N)$ showing that  $P_N^{dis}$ is close to $W^{dis}$
in distribution,  and finally ensure that $W^{dis}$ is close to $W$.\\
\textit{Step 1:} First, we show that $\|P_N-P_N^{dis}\|$
    tends to $0$ sufficiently fast. This problem can be decomposed
    into bounding the tail $T_N$ and the difference $D_N$, where
\begin{align*}
    T_N := \sup_{\lambda \in [0,1]}\sup_{|u|>N^\rho} |P_N(\lambda, u)|, \quad D_N := \sup_{\lambda \in [0,1],u \in [-N^{\rho}, N^{\rho}]}|P_N(\lambda,u)-P_N^{dis}(\lambda,u)|.
\end{align*}
The tail is controlled by the exponential tail conditions in Assumption \ref{ass_1b}, part iii). The difference $D_N$ is closely related to a modulus of continuity, because  we are looking at a maximum of the type
\[
D_N= \sup_{\lambda \in [0,1],u \in [-N^{\rho}, N^{\rho}]}|P_N(\lambda, u)- P_N(\lambda_{\mathcal{G}_N}, u_{\mathcal{G}_N})|.
\]
To prove that $D_N$ is small, we control moments of the modulus
of continuity. The proof relies on empirical process bounds, such as
Theorem 2.2.4 in \cite{vaart:wellner:1996}. We also see that we carefully have to weigh the size of $\rho$ compared to $\varsigma$, because the grid has to be dense enough (big $\varsigma$) such that $D_N \to 0$, all the while the domain that we are considering is growing (with speed moderated by $\rho$).\\
\textit{Step 2:} Next, we prove proximity in the Prokhorov distance
of the discretized verssions $P_N^{dis}, W^{dis}$. This proof consists
essentially of two separate approximations. First, we take a normal
distribution $\mathcal{N}(0, \Sigma_N)$ on the vector space $Dis(N)$,
where $\Sigma_N$ reflects the covariance of $P_N^{dis}$.
We use a finite sample Gaussian approximation for high-dimensional
strongly mixing vectors to show  that
$\pi_\infty(P_N^{dis},\mathcal{N}(0, \Sigma_N))$ is small.
The approximation is a consequence of Theorem 3.27 in
\cite{dehling:mikosch:sorensen:2002} and
a precise statement is given in Lemma \ref{lemdehl} in  the
Supplementary Information.
The approximation guarantees that
$\pi_\infty(P_N^{dis},\mathcal{N}(0, \Sigma_N))$ decays at polynomial rate
controlled  by the dependence (i.e. $\nu$),
as well as the dimension of $Dis(N)$ (i.e. $\rho$ and $\varsigma$).
In the  second approximation, we need to show that
$\pi_\infty(W^{dis},\mathcal{N}(0, \Sigma_N))$
is also small - where we can understand $W^{dis}$
as a Gaussian distribution with different covariance,
say $\tilde  \Sigma_N$ that is determined by the long-run variance
kernel $c$, defined in \eqref{e:lr_var}.
We demonstrate first that  $\Sigma_N$
and $\tilde  \Sigma_N$ are close to each other.
Then,  we use the fact that the squared Prokhorov metric can
be upper bounded by the Wasserstein metric
$\mathcal{W}_2(W^{dis},\mathcal{N}(0, \Sigma_N))$.
The Wasserstein metric of two centered, high-dimensional
Gaussians has a closed
expression only depending on their covariances. 
Thus, with some additional arguments (including inequalities from  the famous contributions of \cite{powers:stormer:1970}) we are able to establish for a
large enough $C>0$,
\[
\pi_\infty(W^{dis},\mathcal{N}(0, \Sigma_N)) \le \mathcal{W}_2(\mathcal{N}(0, \tilde \Sigma_N),\mathcal{N}(0, \Sigma_N))^{1/2}\le C N^{-\tau}.
\]
 \textit{Step 3:} In the last step of the proof,
we show that $\|W-W^{dis}\|$ decays sufficiently fast. On a high level, the proof is similar to the one for the difference $\|P_N-P_N^{dis}\|$ -  we have a tail part and a modulus of continuity part, both of which have to be controlled. Yet the techniques to establish these results are very different. Showing that both parts are small involves some nuanced arguments for the Brownian motion in Banach spaces. For instance, to control the difference of $W$ and $W^{dis}$ on $[0,1] \times [-N^\rho, N^\rho]$, we 
exploit concentration inequalities for the modulus of continuity of the Banach-valued Brownian motion, found in the proofs of \cite{acosta:1985}. To control the tails of $W(\lambda, u)$ in $u$, we carefully investigate the interplay of decaying tails of the latent functions $X_i$ and the decay of the mixing coefficients, controlled by $\nu$.
\hfill $\square$

\section{Mathematical consequences} \label{s:cons}
In this section, we explore some probabilistic implications of
Theorem \ref{thm1}. Throughout the proofs of this section,
we denote by $C$ a generic positive constant, independent
of $N$, that may increase from one line to the next.
We begin by deriving couplings for sparse
functional data and then give two convergence results
for the case where the functions $X_1,X_2,\ldots$ are fully observed.

\textbf{Couplings and convergence in the Wasserstein metric}
In the following, we consider the linearly interpolated version of
the process $P_N$,
\begin{align} \label{e:partialsl}
P_N^{lin}(\lambda) = \begin{cases}
    P_N(\lambda), \,\, if \,\, \lambda \in \{0,1/N,\ldots,1\},\\
    linear\,\, between.
\end{cases}
\end{align}
It is easy to show that the distance between $P_N$ and $P_N^{lin}$
is asymptotically negligible. However, $ P_N^{lin}$ has the advantage
of living in  the simpler space
$\mathcal{C}_0([0,1]\times I_2, \mathbb{R}^d)$.
If $I_2=\mathbb{R}$,
we also require, that the functions are asymptotically vanishing, i.e. that for any $f \in \mathcal{C}_0([0,1]\times \mathbb{R}, \mathbb{R}^d)$
\[
\lim_{y \to \infty} \sup_{\lambda \in [0,1], |u|>y}|f(\lambda,u)|=0.
\]
Equipped with the sup-norm, the space
$\mathcal{C}_0([0,1]\times I_2, \mathbb{R}^d)$ is a complete,
separable subspace of $L^\infty([0,1]\times I_2, \mathbb{R}^d)$.
As we will now see, on such Polish spaces, proximity in weak
convergence metrics translates into useful coupling results. We begin with a consequence of Theorem \ref{thm1} that
is a starting point for subsequent results. It provides a novel coupling result for the functional partial sum process with a Banach-valued Brownian motion.

\begin{corollary} \label{cor_coup}
    Suppose that Assumptions \ref{ass_1} and  \ref{ass_1b} hold.
    Then, it is possible to define a coupling of the vector $(X_{1,M_N},\ldots,X_{N,M_N})$ and the Brownian motion $W$ on a probability space $(\Omega, \mathcal{A}, \mathbb{P})$ such that
    \begin{align} \label{e:coupl1}
\mathbb{P}\Big(\sup_{x \in [0,1]} \big\| P_N^{lin}(x) -W(x)\big\|
\ge CN^{-\tau}\Big)\le CN^{-\tau}.
\end{align}
\end{corollary}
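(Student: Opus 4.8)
The plan is to convert the distributional (Prokhorov) bound of Theorem \ref{thm1} into an almost-sure-type statement through the classical coupling characterisation of the Prokhorov metric, namely Strassen's theorem: on a \emph{separable} metric space, $\pi(\mu,\nu)\le\epsilon$ entails the existence of a coupling $(\xi,\eta)$ with $\xi\sim\mu$, $\eta\sim\nu$ and $\mathbb{P}(d(\xi,\eta)\ge\epsilon)\le\epsilon$. Since this characterisation needs separability, which fails for $L^\infty([0,1]\times I_2,\mathbb{R}^d)$ but holds on the Polish subspace $\mathcal{C}_0([0,1]\times I_2,\mathbb{R}^d)$, I would run the whole argument for the continuous interpolation $P_N^{lin}$ rather than for the step process $P_N$ itself.

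First I would reduce from $P_N$ to $P_N^{lin}$. The two processes coincide on the grid $\{0,1/N,\ldots,1\}$, and since $P_N$ is piecewise constant in $\lambda$ while $P_N^{lin}$ interpolates linearly, the difference is controlled by the largest jump, $\|P_N-P_N^{lin}\|\le N^{-1/2}\max_{1\le i\le N}\|X_{i,M_N}-\mathbb{E}X_{i,M_N}\|$. The uniform $J$-th moment bounds of Assumption \ref{ass_1} (ii)--(iii) with $J>4$, combined with a union bound and Markov's inequality, then give $\mathbb{P}(\|P_N-P_N^{lin}\|\ge CN^{-\tau'})\le CN^{-\tau'}$ for some $\tau'>0$. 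As $P_N$ and $P_N^{lin}$ are defined on the same probability space, this furnishes $\pi_\infty(P_N^{lin},P_N)\le CN^{-\tau'}$, whence by the triangle inequality and Theorem \ref{thm1} we obtain $\pi_\infty(P_N^{lin},W)\le CN^{-\tau}$ after relabelling $\tau$.

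Next, applying Strassen's theorem to $\mu=\mathrm{law}(P_N^{lin})$ and $\nu=\mathrm{law}(W)$, both supported on the Polish space $\mathcal{C}_0([0,1]\times I_2,\mathbb{R}^d)$, produces random elements $\widehat P\sim\mu$ and $\widehat W\sim\nu$ with $\mathbb{P}(\|\widehat P-\widehat W\|\ge CN^{-\tau})\le CN^{-\tau}$ (enlarging $C$ if needed). To obtain a coupling of the \emph{vector} $(X_{1,M_N},\ldots,X_{N,M_N})$ with $W$, as required by the statement, I would use that $P_N^{lin}=\Phi(X_{1,M_N},\ldots,X_{N,M_N})$ for a measurable map $\Phi$ on the Polish product space, and invoke the transfer theorem (Kallenberg): via a regular conditional distribution of the vector given $P_N^{lin}$, the coupling $(\widehat P,\widehat W)$ extends to a coupling $(\widehat{\mathbf X},\widehat W)$ in which $\widehat{\mathbf X}$ has the law of $(X_{1,M_N},\ldots,X_{N,M_N})$ and $\Phi(\widehat{\mathbf X})=\widehat P$ almost surely. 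The process $P_N^{lin}$ recomputed from $\widehat{\mathbf X}$ then equals $\widehat P$, so the probability bound transfers verbatim and yields \eqref{e:coupl1}.

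The delicate points, and the ones I would treat most carefully, are twofold. First is the passage through the non-separable space $L^\infty$: Strassen's characterisation cannot be applied to $P_N$ directly, which is exactly why the reduction to the continuous, separable process $P_N^{lin}$ must precede any coupling argument. Second is the transfer step, where one must guarantee the existence of a regular conditional distribution of the vector given $P_N^{lin}$ — this is ensured by the Polish structure of the finite product of $\mathcal{C}_0$ spaces — and verify measurability of $\Phi$. The remaining estimates, namely the increment bound and the bookkeeping of constants, are routine consequences of the moment assumptions.
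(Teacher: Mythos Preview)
Your proposal is correct and follows essentially the same route as the paper: control $\|P_N-P_N^{lin}\|$ via moment bounds, use the triangle inequality for the Prokhorov metric together with Theorem~\ref{thm1}, and then invoke the Strassen-type coupling characterisation (the paper cites it as Theorem~2.13 in Huber--Ronchetti) on the Polish space $\mathcal{C}_0([0,1]\times I_2,\mathbb{R}^d)$. The only difference is in the last step: your transfer-theorem argument via regular conditional distributions is valid but heavier than needed, since the map $\Phi$ is a measurable bijection --- the paper simply recovers the (centered) functions directly as $\sqrt{N}\big[P_N^{lin}(n/N,\cdot)-P_N^{lin}((n-1)/N,\cdot)\big]$ on the new space.
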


\begin{proof}
    First, we notice that by a standard argument (see the maximum inequality in Lemma 2.2.2. of \cite{vaart:wellner:1996})
    \begin{align} \label{e:proxwe}
        & \mathbb{P}\big(\sup_{x \in [0,1]} \|P_N^{lin}(x)-P_N(x)\|>N^{-1/5}\big) \\
        \le & \mathbb{P}\big(\max_{n=1,\ldots,N}\|X_{n,M_N}/\sqrt{N}\|>N^{-1/5}\big)\nonumber\\
        \le & N^{-3/10}\{\mathbb{E} \max_{n=1,\ldots,N} \|X_{n,M_N}\|^J\}^{1/J} \le C N^{-3/10+1/J}, \nonumber
    \end{align}
    where $C>0$ is some constant depending only on $J$.
    Since $J>4$, the right side is of size $\mathcal{O}(N^{-1/20})$. Here, we have used that the $J$th moment of $\|X_{n,M_N}\|$ is uniformly bounded. Next, we exploit the fact that \eqref{e:proxwe} implies according to Theorem 2.13 from \cite{huber:ronchetti:2009} that $\pi_\infty(P_N^{lin}, P_N) \le  C N^{-1/20}$. We point out that this direction of the Theorem does not require that the underlying space is separable or complete. Using Theorem \ref{thm1}, we obtain with the triangle inequality and for $\tau<1/20$ that
    \[
     \pi_\infty(P_N^{lin},W) \le \pi_\infty(P_N^{lin},P_N) + \pi_\infty(P_N,W) \le C [N^{-\tau}+ N^{-1/20}] \le CN^{-\tau}.
    \]
    Since $P_N^{lin}, W$ live in the separable,  complete space
    $\mathcal{C}_0([0,1]\times I_2, \mathbb{R}^d)$,
    we can use the reverse direction of Theorem 2.13 from
    \cite{huber:ronchetti:2009}, which implies that there exists
    a coupling of  $(P_N^{lin}, W)$ on some appropriate probability
    space $(\Omega, \mathcal{A}, \mathbb{P})$, such that
    \eqref{e:coupl1} holds.
On this space, we can define the variables $X_{n,M_N}(u) = P_N^{lin}(n/N,u)-P_N^{lin}((n-1)/N,u)$, which finishes the proof.
\end{proof}

We now derive upper bounds
on the Wasserstein metric, c.f. \eqref{e:wm},
and the moments of the
coupling in Corollary \ref{cor_coup}. 
Part i) of Theorem \ref{t:wass}  is related to recent results
of \cite{Liu:Wang:2023}
who investigate invariance principles in the Wasserstein metric
for univariate dynamical systems. We formulate the result for the Wasserstein metric $\mathcal{W}_q$ and $q>2$ (depending again on the constants in Assumptions \ref{ass_1} and \ref{ass_1b}), which directly implies the result for $q =2$. However, it is sometimes useful in the theory of stochastic processes to have results for slightly more than second moments (see also our proof of Corollary \ref{cor:strong2}).

\begin{theorem} \label{t:wass}
    Suppose  Assumptions \ref{ass_1} and  \ref{ass_1b} hold.
    \begin{itemize}
        \item[i)]
    Then, for some $q>2$
    \[
    \mathcal{W}_q(P_N^{lin},W) \le  \bar C N^{-\bar \tau},
    \]
    where the constants $q, \bar C, \bar \tau$ only depend on the constants
    $C_i, J, \nu, \xi, \gamma, \rho$ and $\kappa$ from Assumptions \ref{ass_1} and \ref{ass_1b}.
    \item[ii)] It is possible to define a coupling of the vector $(X_{1,M_N},\ldots,X_{N,M_N})$ and the Brownian motion $W$ on a probability space $(\Omega, \mathcal{A}, \mathbb{P})$ such that
    \[
    \Big\{\mathbb{E} \sup_{x \in [0,1]} \big\| P_N^{lin}(x) -W(x)\big\|^q\Big\}^{1/q} \le \bar C N^{-\bar \tau}.
    \]
    \end{itemize}
\end{theorem}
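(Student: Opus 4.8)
The plan is to mirror the three-step decomposition used to prove Theorem~\ref{thm1}, but to propagate $q$-th moment bounds through each step instead of the Prokhorov distance, and to treat separately the single step (the Gaussian approximation) where only a Prokhorov bound is available. Throughout I write $\|\cdot\|$ for the sup-norm and use repeatedly that whenever two random elements are measurable functions of a common source of randomness, the diagonal coupling gives $\mathcal{W}_q(\xi,\eta)\le\{\mathbb{E}\|\xi-\eta\|^q\}^{1/q}$. First I would fix $q$ slightly above $2$ and an auxiliary exponent $q'$ with $q<q'<J$, so that the $J$-th moment assumptions in Assumption~\ref{ass_1} still furnish uniformly bounded $q'$-th moments of the relevant objects. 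The backbone of the argument is then the triangle inequality for $\mathcal{W}_q$,
\[
\mathcal{W}_q(P_N^{lin},W) \le \mathcal{W}_q(P_N^{lin},P_N) + \mathcal{W}_q(P_N,P_N^{dis}) + \mathcal{W}_q(P_N^{dis},W^{dis}) + \mathcal{W}_q(W^{dis},W),
\]
using the discretization $P_N^{dis}$, $W^{dis}$ and the finite-dimensional Gaussians introduced in the proof of Theorem~\ref{thm1}.

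Three of these four terms I would control by moment versions of bounds already available. For $\mathcal{W}_q(P_N^{lin},P_N)$, exactly as in \eqref{e:proxwe}, the two processes live on the same space and differ by at most $\max_n\|X_{n,M_N}/\sqrt N\|$; Lemma~2.2.2 of \cite{vaart:wellner:1996} together with the uniform $J$-th moment bound yields $\mathcal{W}_q(P_N^{lin},P_N)\le C N^{-1/2+1/J}$, a fixed negative power of $N$. The terms $\mathcal{W}_q(P_N,P_N^{dis})$ and $\mathcal{W}_q(W^{dis},W)$ are the discretization errors of Steps~1 and~3 of the proof of Theorem~\ref{thm1}; since $P_N^{dis}$ (resp.\ $W^{dis}$) is a deterministic functional of $P_N$ (resp.\ $W$), the diagonal coupling bounds them by $\{\mathbb{E}\|P_N-P_N^{dis}\|^q\}^{1/q}$ and $\{\mathbb{E}\|W-W^{dis}\|^q\}^{1/q}$, i.e.\ by the $q$-th-moment versions of the same modulus-of-continuity and tail quantities controlled there. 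The empirical-process bounds (Theorem~2.2.4 of \cite{vaart:wellner:1996}) and the exponential tail conditions of Assumption~\ref{ass_1b}~iii) already deliver these in $q$-th moment for any $q\le q'<J$.

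The central term $\mathcal{W}_q(P_N^{dis},W^{dis})$ is where the new work lies. Inserting the intermediate Gaussian $\mathcal{N}(0,\Sigma_N)$ on $Dis(N)$ and using $W^{dis}\sim\mathcal{N}(0,\tilde\Sigma_N)$, I would split it as $\mathcal{W}_q(P_N^{dis},\mathcal{N}(0,\Sigma_N))+\mathcal{W}_q(\mathcal{N}(0,\Sigma_N),\mathcal{N}(0,\tilde\Sigma_N))$. The second summand extends the Gaussian-to-Gaussian estimate of Step~2: coupling a common $Z\sim\mathcal{N}(0,I)$ through $\Sigma_N^{1/2}Z$ and $\tilde\Sigma_N^{1/2}Z$ reduces the bound to the sup-norm $q$-th moment of a centered Gaussian with covariance built from $\Sigma_N^{1/2}-\tilde\Sigma_N^{1/2}$, which is controlled by $\|\Sigma_N^{1/2}-\tilde\Sigma_N^{1/2}\|$ times a factor $\sqrt{\log|\mathcal{G}_N|}=O(\sqrt{\log N})$; the operator-square-root difference is in turn bounded by $\|\Sigma_N-\tilde\Sigma_N\|$ via the Powers--Størmer inequality \cite{powers:stormer:1970}, exactly as in the $\mathcal{W}_2$ computation, so this term decays polynomially up to a harmless log factor. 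The first summand is the genuine obstacle: Lemma~\ref{lemdehl} (from Theorem~3.27 of \cite{dehling:mikosch:sorensen:2002}) delivers only a Prokhorov bound $\pi_\infty(P_N^{dis},\mathcal{N}(0,\Sigma_N))\le\epsilon_N$ with $\epsilon_N=CN^{-\theta}$, and the inequality $\pi^2\le\mathcal{W}_q$ runs the wrong way. Here I would invoke Strassen's theorem to realise a coupling $(P_N^{dis},Z_N)$ with $\mathbb{P}(\|P_N^{dis}-Z_N\|>\epsilon_N)\le\epsilon_N$, and then split by Hölder with exponent $q'/q>1$:
\[
\mathbb{E}\|P_N^{dis}-Z_N\|^q \le \epsilon_N^{q} + \big\{\mathbb{E}\|P_N^{dis}-Z_N\|^{q'}\big\}^{q/q'}\,\mathbb{P}\big(\|P_N^{dis}-Z_N\|>\epsilon_N\big)^{1-q/q'}.
\]
The tail factor contributes $\epsilon_N^{1-q/q'}$, a positive power of $N$, while the remaining factor is finite because $\mathbb{E}\|P_N^{dis}\|^{q'}\le\mathbb{E}\|P_N\|^{q'}$ is uniformly bounded by the moment estimates established for Theorem~\ref{thm1} and $\mathbb{E}\|Z_N\|^{q'}$ grows only like $(\log|\mathcal{G}_N|)^{q'/2}$. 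The main subtlety to watch is exactly this uniform-in-dimension moment control: since $|\mathcal{G}_N|$ grows like $N^{\rho+2\varsigma}$, I must ensure the log factors it produces are dominated by the polynomial decay, which holds because $\theta(1-q/q')$ is a fixed positive exponent.

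Collecting the four bounds and setting $\bar\tau$ to the smallest resulting exponent (and $\bar C$ to the sum of the constants) proves part~i) for some $q>2$. Part~ii) then follows exactly as in Corollary~\ref{cor_coup}: since $\mathcal{C}_0([0,1]\times I_2,\mathbb{R}^d)$ is Polish, the infimum defining $\mathcal{W}_q(P_N^{lin},W)$ is attained by an optimal coupling $(\tilde P_N^{lin},\tilde W)$ on some $(\Omega,\mathcal{A},\mathbb{P})$, for which $\{\mathbb{E}\sup_x\|\tilde P_N^{lin}(x)-\tilde W(x)\|^q\}^{1/q}=\mathcal{W}_q(P_N^{lin},W)\le\bar C N^{-\bar\tau}$; defining the reconstructions on this space through the increments $X_{n,M_N}(u)=\sqrt N\{\tilde P_N^{lin}(n/N,u)-\tilde P_N^{lin}((n-1)/N,u)\}+\mathbb{E}[X_{n,M_N}(u)]$ recovers the required joint law and yields the stated moment bound.
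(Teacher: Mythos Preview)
Your approach is essentially correct but substantially more laborious than the paper's. The paper does not redo the three-step decomposition in $q$-th moment; instead it uses Corollary~\ref{cor_coup} as a black box to obtain a single coupling $(P_N^{lin},W)$ with $\mathbb{P}(\|P_N^{lin}-W\|\ge CN^{-\tau})\le CN^{-\tau}$, and then applies exactly once the Strassen--H\"older truncation you reserved for the Gaussian approximation step: split $\mathbb{E}\|P_N^{lin}-W\|^q$ on the event $\{\|P_N^{lin}-W\|\le CN^{-\tau}\}$ and its complement, and bound the bad part by $CN^{-(4-q)\tau/4}\{\mathbb{E}\|P_N^{lin}-W\|^4\}^{q/4}$. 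The only additional input is Lemma~\ref{lemboundPN}, which shows $\mathbb{E}\|P_N^{lin}\|^4\le CN^{\tau'}$ and $\mathbb{E}\|W\|^4\le C$ with $\tau'$ arbitrarily small (by shrinking $\rho$), so that $\tau'<(4-q)\tau/4$ for $q$ close enough to $2$. Part~ii) then follows immediately because the coupling just constructed already realises the moment bound.

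Two small points about your write-up. First, you assert that $\mathbb{E}\|P_N^{dis}\|^{q'}$ is ``uniformly bounded'', but in fact the paper only obtains $\mathbb{E}\|P_N^{lin}\|^4\le CN^{10\rho}$ (Lemma~\ref{lemboundPN}); this slow polynomial growth is harmless and is precisely what forces the careful balance $\tau'<(4-q)\tau/4$, but it should be stated rather than suppressed. Second, the tail condition in Assumption~\ref{ass_1b}~iii) controls only the \emph{second} moment of $\sup_{|u|>y}|X_n(u)|$, so your claim that the tail part of $\|P_N-P_N^{dis}\|$ is directly available in $q$-th moment for $q>2$ needs an interpolation with the $J$-th moment bound on $\|X_n\|$; this is routine but not literally ``already delivered''. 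Both issues disappear in the paper's route, since there the truncation is applied after the Prokhorov bound has absorbed all the discretization work.
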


\begin{proof}
Consider the coupling $P_N^{lin}, W$ from Corollary \ref{cor_coup}.
Then, with the same constant $C$ from that corollary,
we can decompose the expected distance of   $P_N^{lin}$ and $ W$ as follows:
\begin{align*}
     & \mathbb{E}\big[\|P_N^{lin}-W\|^q\big] \\
    = & \mathbb{E}\big[\|P_N^{lin}-W\|^q\mathbb{I}\{\|P_N^{lin}-W\|\le CN^{-\tau}\}\big]\\
    &+\mathbb{E}\big[\|P_N^{lin}-W\|^q\mathbb{I}\{\|P_N^{lin}-W\|> CN^{-\tau}\}\big]\\
    \le & C^qN^{-q\tau} + \mathbb{E}\big[\|P_N^{lin}-W\|^q\mathbb{I}\{\|P_N^{lin}-W\|> CN^{-\tau}\}\big].
\end{align*}
Using Hölder's inequality  and \eqref{e:coupl1}, we get
\begin{align*}
& \mathbb{E}\big[\|P_N^{lin}-W\|^q\mathbb{I}\{\|P_N^{lin}-W\|> C N^{-\tau }\}\big]\\
\le & C^{(4-q)/4} N^{-(4-q)\tau /4} \big\{\mathbb{E}\big[\|P_N^{lin}-W\|^4\big]\big\}^{q/4}.\\
\end{align*}
Now, from Lemma \ref{lemboundPN} it follows that
\[
\mathbb{E}\big[\|P_N^{lin}\|^4\big]^{q/4}\le C' N^{ \tau'}, \quad\mathbb{E}\big[\|W\|^4\big]^{q/4} \le C',
\]
for some large enough constant $C'$ and some $ \tau' <(4-q)\tau /4$. This proves the first claim of the Theorem.
The second part follows directly from our above construction.
We take the probability space on which our coupling $W, P_N^{lin}$ is defined.
Noticing that $X_{n,M_N}(u) = P_N^{lin}(n/N,u)-P_N^{lin}((n-1)/N,u)$,
 we see that we can define on the same probability space the vector
 of functions $(X_{1,M_N},\ldots,X_{N,M_N})$. \\
\end{proof}

\begin{remark} \label{rem_mixing} (Couplings for mixing sequences)
    One interesting innovation introduced by Theorem \ref{t:wass} is that it allows approximating a sequence of $\alpha$-mixing random functions by a sequence of independent random functions. In particular, Theorem \ref{t:wass}, part ii) entails directly that we can define on a suitable probability space $(\Omega, \mathcal{A}, \mathbb{P})$ the vector $(X_{1,M_N},\ldots,X_{N,M_N})$ and i.i.d. Gaussians $Z_1,...,Z_N$ such that for some $q>2$
    \[
    \Big\{\mathbb{E} \max_{1 \le k \le N} \Big\| \frac{1}{\sqrt{N}}\sum_{i=1}^k X_{i,M_N}-\frac{1}{\sqrt{N}}\sum_{i=1}^k  Z_i\Big\|^q\Big\}^{1/q} \le \bar C N^{-\bar \tau}.
    \] 
    Approximations of such types 
    are generally \textit{not available under $\alpha$-mixing on Banach or even Hilbert spaces, not even for time series of fully observed functions and not even in the strictly stationary case.}. 
    In the case of $\beta$- or $\phi$-mixing, numerous coupling results exist, such as the ones by \cite{berbee:1979}  or \cite{dehling:philipp:1982}, that approximate weakly dependent by independent variables, and that extend even to abstract Polish spaces. Such approximations remain true for $\alpha$-mixing in finite dimensional Euclidean spaces, as shown in \cite{berkes:philipp:1979}. But, these approximations break down in infinite dimensions, even in the simplest case of separable Hilbert spaces (\cite{dehling:1983b}). 
\end{remark}

We now consider some applications of  Theorems \ref{thm1} and  \ref{t:wass}
for fully observed functions. More precisely, we make the following assumption.

\begin{ass} \label{a:S1} (Fully observed setting)
For all $i, N$,   $X_{i,M_N}=X_i$  with $I_1=I_2$.
\end{ass}

\textbf{A bounded law of the iterated logarithm}
Under Assumption \ref{a:S1}, Theorem \ref{thm1} implies
a bounded law of the iterated logarithm.

\begin{corollary} \label{cor:lil}
    Suppose that the conditions of Theorem \ref{thm1} and Assumption
    \ref{a:S1} hold. Put $\sigma^2 := \mathbb{E}\|W\|^2>0$.
    Then, for a fixed, sufficiently large constant $C>0$,
    \[
    \limsup_N \bigg\|\frac{  \sum_{i=1}^{N} \big\{X_{i}- \mathbb{E}[X_{i}]\big\}}{\sqrt{N\log(\log(N))}}\bigg\|\le C \sigma \qquad a.s.
    \]
\end{corollary}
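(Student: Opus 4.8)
The plan is to convert the distributional bound of Theorem~\ref{thm1}, packaged as the coupling of Corollary~\ref{cor_coup}, into a summable family of tail probabilities and then run a Borel--Cantelli argument along a geometric subsequence. Write $S_n := \sum_{i=1}^n\{X_i - \mathbb{E}[X_i]\}$ and $V_N := \max_{1\le n\le N}\|S_n\|$. Since $\|S_N\|\le V_N$, it suffices to bound $\limsup_N V_N/\sqrt{N\log\log N}$. Because the sup-norm is convex and $P_N^{lin}$ interpolates the points $P_N(n/N)=S_n/\sqrt N$ linearly, the supremum of $\|P_N^{lin}\|$ is attained at a node, so $Y_N := \sup_{x\in[0,1]}\|P_N^{lin}(x)\| = V_N/\sqrt N$ and the target reduces to controlling $Y_N/\sqrt{\log\log N}$.

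Next I would fix $\theta>1$, set $N_k := \lfloor\theta^k\rfloor$, and apply Corollary~\ref{cor_coup} at each sample size $N_k$, producing a Brownian motion $W^{(k)}\stackrel{d}{=}W$ with $\mathbb{P}\big(\sup_x\|P_{N_k}^{lin}(x) - W^{(k)}(x)\| \ge CN_k^{-\tau}\big)\le CN_k^{-\tau}$. By the reverse triangle inequality for the sup-norm this yields, for every $t>0$,
\begin{align*}
\mathbb{P}(Y_{N_k} > t) \le \mathbb{P}\big(\|W^{(k)}\| > t - CN_k^{-\tau}\big) + CN_k^{-\tau},
\end{align*}
where $\|W^{(k)}\|\stackrel{d}{=}\|W\|$. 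The crucial input is that $W$ is a Gaussian random element of $\mathcal{C}_0([0,1]\times I_2,\mathbb{R}^d)$, so by Gaussian concentration (Fernique's theorem / the Borell--TIS inequality) its weak variance satisfies $\sigma_*^2 = \sup_{x,u}\mathbb{E}[W(x,u)^2]\le \mathbb{E}\|W\|^2 = \sigma^2$ and $\mathbb{P}(\|W\| > r)\le \exp\big(-(r-\mathbb{E}\|W\|)^2/(2\sigma^2)\big)$ for $r>\mathbb{E}\|W\|$. Taking $t = (C\sigma+\epsilon)\sqrt{\log\log N_k}$ and using $N_k\sim\theta^k$, the first term is of order $(\log N_k)^{-C^2/2+o(1)}\approx (k\log\theta)^{-C^2/2}$, which is summable in $k$ as soon as $C>\sqrt2$ (and a fortiori for any sufficiently large $C$), while the second term decays geometrically and is trivially summable.

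Borel--Cantelli then gives $\limsup_k Y_{N_k}/\sqrt{\log\log N_k}\le C\sigma+\epsilon$ almost surely, and intersecting over a sequence $\epsilon\downarrow0$ yields the bound $C\sigma$ along the subsequence. To pass to all $N$, I would use that $V_N$ is nondecreasing: for $N_k\le N<N_{k+1}$,
\begin{align*}
\frac{V_N}{\sqrt{N\log\log N}} \le \frac{V_{N_{k+1}}}{\sqrt{N_{k+1}\log\log N_{k+1}}}\cdot\sqrt{\frac{N_{k+1}\log\log N_{k+1}}{N_k\log\log N_k}},
\end{align*}
where the last ratio tends to $\sqrt\theta$. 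Hence $\limsup_N V_N/\sqrt{N\log\log N}\le\sqrt\theta\,C\sigma$ almost surely; intersecting over rational $\theta\downarrow1$ removes the factor, and $\|S_N\|\le V_N$ concludes.

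The main obstacle is conceptual rather than computational: Corollary~\ref{cor_coup} furnishes a \emph{separate} coupling for each $N$ on its own probability space, so, unlike a strong invariance principle, it does not place the whole partial-sum sequence and a single Brownian motion on one space; the classical law of the iterated logarithm for Brownian motion therefore cannot be invoked directly. The argument must stay at the level of distributional tail bounds on the original sequence and extract almost-sure control purely through Borel--Cantelli. Making this succeed hinges on obtaining a genuinely sub-Gaussian tail for $\|W\|$ with the correct variance proxy (where Fernique/Borell--TIS enters) and on matching that tail against the density of the geometric subsequence so that the bound is summable, while keeping the coupling error summable at the same time; passing to the monotone maximum $V_N$ is precisely what makes the gap-filling between $N_k$ and $N_{k+1}$ automatic.
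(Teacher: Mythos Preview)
Your proof is correct and follows essentially the same route as the paper: a geometric subsequence, a distributional comparison of $\|P_{N_k}\|$ with $\|W\|$ via the polynomial Prokhorov/coupling rate, a sub-Gaussian tail for the Banach-valued Gaussian $\|W\|$ (you use Borell--TIS, the paper uses Markov together with Fernique's square-exponential moment), and Borel--Cantelli plus gap-filling through monotonicity of the running maximum. The only cosmetic differences are that the paper works with $P_N$ rather than $P_N^{lin}$, fixes the ratio $\theta=e$ instead of optimizing over $\theta\downarrow 1$, and appeals to Theorem~\ref{thm1} directly instead of passing through the coupling of Corollary~\ref{cor_coup}; none of these affect the argument.
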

Laws of the iterated logarithm have been well-investigated in the literature
on probability theory for Banach spaces. We refer to \cite{einmahl:1993}
for some results and further references. It is possible to further strengthen
Corollary \ref{cor:lil} to a compact law of the iterated logarithm,
i.e. the object inside the norm lives almost surely in  a compact set. 
Such extensions are possible by applying the bounded law of the iterated logarithm to a projected version of the data and for details on this (standard) argument, we refer the interested reader to  Section 4.1 in \cite{kuelbs:philipp:1980}.
Compact laws of the iterated logarithm are in turn closely connected
to invariance principles with a slow approximation rate.
For instance, \cite{dehling:philipp:1982} show that they are equivalent
under certain additional conditions on the geometry of the Banach space.
Corollary \ref{cor:lil} is not a consequence of existing results and imposes
weaker dependence requirements than  existing bounded laws for
strongly mixing time series. For instance, if all variables have finite
exponential moments, i.e. $\sup_n \mathbb{E}\exp(\|X_n\|)< \infty$,
we require for the mixing coefficients
$\alpha_{X,\delta,M}(n) \le C n^{-\nu}$ for some $\nu>3$,
while \cite{dehling:1983} requires $\nu>8$.
Indeed, our requirement of $\nu>3$ is exactly identical to the mixing
rate in \cite{dehling:philipp:1982} for the simpler case of
Hilbert space valued time series. More important than the precise mixing
rate is that our theorem 
only imposes low-level conditions on the individual functions $X_i$, whereas the cited works rely on abstract, high-level
conditions for the entire sum $P_N(1)$. Such conditions are often required in Banach spaces, but can be quite hard to validate 
(see also Chapter 10.3 in \cite{ledoux:talagrand:1991}).

\begin{proof}
We assume that the functions $X_i$ are centered and define the sum $S_{1,n}:= \sum_{i=1}^n X_i$.
    Next, for any $k$ we define $n_k:= \lceil e^k \rceil$ and consider the inequalities
\begin{align*}
   &  \max_{n_k \le n \le n_{k+1}}\frac{\|S_{1,n}\|}{\sqrt{n\log(\log(n))}}\le \frac{\max_{n_k \le n \le n_{k+1}} \|S_{1,n}\|}{\sqrt{n_k\log(\log(n_k))}} \le \frac{\max_{n_k \le n \le n_{k+1}} \|S_{1,n}\|}{e^{k/2}\sqrt{\log(k)}} \\
    \le & \frac{\lceil e^{(k+1)/2}\rceil \max_{n_k \le n \le n_{k+1}} \|S_{1,n}/\sqrt{n_{k+1}}\|}{e^{k/2}\sqrt{\log(k)}} \le 2e^{1/2} \frac{\|P_{n_{k+1}}\|}{\sqrt{\log(k)}}.
\end{align*}
To employ the Borel-Cantelli Lemma, we now consider for some $C>0$ the bounds
\begin{align*}
    \mathbb{P}\bigg(\frac{\|P_{n_{k+1}} \|}{\sqrt{\log(k)}}>C\sigma \bigg) \le \mathbb{P}\bigg(\frac{\|W \|}{\sqrt{\log(k)}}>C\sigma \bigg) + C n_{k+1}^{-\tau} =:T_1(k)+T_2(k).
\end{align*}
Here, we have used Theorem \ref{thm1} in the first step.
By definition of $n_k$ we have $T_2(k) \le C e^{-k\tau}$, which is summable in $k$. For $T_1(k)$ we use Markov's inequality, to give
\begin{align*}
&\mathbb{P}\bigg(\frac{\|W \|}{\sqrt{\log(k)}}>C\sigma \bigg) = \mathbb{P}\bigg(\frac{\sqrt{2}\|W \|}{C\sigma }> \sqrt{\log(k^2)}\bigg)\\
\le & \mathbb{E}\exp(2\|W \|^2/[C^2\sigma^2]) k^{-2} \le C k^{-2}.   
\end{align*}
In the second step, we have used a well-known result by  \cite{fernique:1975} (p.11), which implies that the square-exponential moment exists for any Gaussian random variable on a separable Banach space.   In our case it implies that for some sufficiently large $C>0$
\[
\mathbb{E}[\exp(\|W \|^2/(C^2 \sigma^2)] <\infty.
\]
Now, $T_1(k)$ is also summable, and the Borel-Cantelli Lemma yields the desired result. 

\end{proof}

\textbf{An almost sure invariance principle} We now prove a strong
invariance principle with a polynomial approximation rate.
For this purpose, we impose the  stronger
$\beta$-mixing (reviewed in Section
\ref{sec_details} of the Supplementary Information).

\begin{ass}\label{a:S2}
The functions $X_i$ are $\beta$-mixing with coefficients
$\beta(n) \le C_1 n^{-\nu}$ for the same $\nu$ as in Assumption \ref{ass_1}.
\end{ass}

\begin{corollary} \label{cor:strong2}
Suppose that the conditions of Theorem \ref{thm1} and Assumptions
\ref{a:S1} and \ref{a:S2} hold.
Then, without changing its distribution, the sequence
$(X_i)_{i \in \mathbb{N}}$ can be redefined on a suitable probability space,
where a centered Brownian motion $W$  with covariances \eqref{e:lr_var}
exists, and  for some $\bar \tau>0$,
    \[
    \Big\|\sum_{i=1}^N X_i -W(N)\Big\|=\mathcal{O}(N^{1/2-\bar \tau}), \quad a.s.
    \]
\end{corollary}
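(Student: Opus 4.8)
\noindent{\sc outline of the proof of Corollary \ref{cor:strong2}}:
The plan is to upgrade the distributional coupling of Theorem~\ref{t:wass} to an almost sure statement by a blocking argument, using the stronger $\beta$-mixing of Assumption~\ref{a:S2} to render the blocks independent. Assume without loss of generality that the $X_i$ are centered (otherwise the drift $N\mathbb{E}X_1$ would dominate), and write $S_n := \sum_{i=1}^n X_i$. I would partition $\mathbb{N}$ into consecutive \emph{big blocks} $I_k$ of length $H_k \asymp k^b$ separated by short \emph{gap blocks} $J_k$ of length $g_k \asymp k^{b'}$, where $b$ is taken large and $b' \in (1/\nu, b)$ is small; the cumulative endpoints $N_k$ then satisfy $N_k \asymp k^{b+1}$ and $g_k \ll H_k$. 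The idea is to couple the big-block sums to \emph{independent} Brownian increments, to absorb the gap blocks into a negligible error, and to glue the increments into a single Brownian motion $W$.

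First I would decouple the big blocks. Since the big blocks are separated by gaps of length $g_k$ and $\sum_k \beta(g_k) \le C_1 \sum_k g_k^{-\nu} < \infty$ by the choice $b'\nu > 1$, iterating Berbee's coupling lemma (\cite{berbee:1979}) produces, on an enlarged space, mutually independent vectors $\bar X_{I_k}$ with $\bar X_{I_k} \stackrel{d}{=} X_{I_k}$ and $\sum_k \mathbb{P}(\bar X_{I_k} \neq X_{I_k}) \le \sum_k \beta(g_k) < \infty$; keeping the original values on the gaps, Borel--Cantelli gives that the resulting partial sums agree with $S_N$ for all large $N$ almost surely, so it suffices to approximate the independent-block version. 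Each big block, viewed in isolation, is a stretch of the original sequence and hence satisfies Assumptions~\ref{ass_1} and \ref{ass_1b} with $H_k$ in the role of $N$. Applying Theorem~\ref{t:wass}(ii) separately and independently to each block therefore yields independent Brownian motions $W^{(k)}$ with covariance \eqref{e:lr_var} such that, writing $E_k$ for the supremum over the whole block trajectory of the difference between the block partial-sum process and the corresponding piece $\sqrt{H_k}\,W^{(k)}$ of the limit, one has $\{\mathbb{E}E_k^q\}^{1/q} \le \bar C H_k^{1/2-\bar\tau}$. Because a concatenation of independent Brownian motions with identical covariance is again a Brownian motion, I would build the global $W$ on $[0,\infty)$ by pasting the $W^{(k)}$ over the big blocks and inserting fresh independent increments over the gaps, so that $W$ has the independent increments of \eqref{e:lr_var}.

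The rate then comes from telescoping together with Markov and Borel--Cantelli. For $N$ inside block $k$, the functional nature of the within-block coupling gives $\|S_N - W(N)\| \le \sum_{j\le k} E_j + R_k$, where $R_k$ collects the gap-block sums and their free Brownian counterparts. A Markov bound yields $\mathbb{P}(E_j > j^s) \le \bar C^q H_j^{q(1/2-\bar\tau)} j^{-sq}$, which is summable once $s > b(1/2-\bar\tau) + 1/q$; hence almost surely $E_j \le j^s$ for large $j$ and $\sum_{j\le k} E_j \lesssim k^{s+1}$. Since $N \asymp N_k \asymp k^{b+1}$ throughout block $k$ (consecutive polynomial endpoints satisfy $N_{k-1}/N_k \to 1$), the requirement $k^{s+1} = o(N_k^{1/2-\bar\tau''})$ reduces to $b(\bar\tau'' - \bar\tau) + \bar\tau'' < -\tfrac12 - \tfrac1q$, which holds for every fixed $\bar\tau'' \in (0,\bar\tau)$ once $b$ is large enough; the analogous but easier bound on $R_k$ follows from the moment estimates on sums over the short gaps after fixing $b'$ small. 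Renaming $\bar\tau''$ to $\bar\tau$ delivers the claim.

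I expect the main obstacle to be the interplay between block size and approximation rate. It is essential to use the whole-trajectory coupling of Theorem~\ref{t:wass}(ii) rather than matching only block endpoints: endpoint matching would leave an interior fluctuation of order $H_k^{1/2}$ for times $N$ strictly inside a block, and balancing this against the accumulated error $\sum_j E_j$ admits no valid window of exponents $b$. The functional coupling reduces the interior error to the same order $H_k^{1/2-\bar\tau}$ as at the endpoints, removing the upper constraint on $b$ and letting the two competing error sources be balanced at once. The second delicate point is exactly the $\alpha$- versus $\beta$-mixing distinction of Remark~\ref{rem_mixing}: only the $\beta$-mixing of Assumption~\ref{a:S2} supplies the Berbee coupling needed to make the blocks, and hence the constructed Brownian increments, genuinely independent.
\hfill $\square$
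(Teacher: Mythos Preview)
Your proposal is correct and takes essentially the same route as the paper: both rely on a blocking argument, use Berbee's coupling for the $\beta$-mixing sequence to make the blocks independent, and invoke the Gaussian approximation of Theorems~\ref{thm1}/\ref{t:wass} within each block. The paper is more economical, delegating the blocking machinery to Section~8 of \cite{dehling:1983} and concentrating on verifying the two abstract preconditions that machinery needs---the moment bound $\mathbb{E}\big\|\sum_{i=a+1}^{a+n}X_i\big\|^q\le Cn^{q/2}$ and the Prokhorov rate $\pi_\infty(P_n(1),W(1))\le Cn^{-\tau}$---whereas you spell out the construction explicitly and appeal directly to the Wasserstein coupling of Theorem~\ref{t:wass}(ii) on each block; but the substance is the same.
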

The  almost sure invariance principle in Corollary \ref{cor:strong2}
is related to recent results by \cite{lu:wu:xiao:xu:2022}
who prove a strong invariance principle for $\beta$-mixing  functional data.
Yet, there are important differences between the two results.
Our corollary is formulated on the Banach space of continuous functions
(as opposed to a separable Hilbert space), requires polynomially decaying
$\beta$-mixing coefficients (as opposed to exponentially decaying ones),
requires only weak stationarity (as opposed to strict stationarity)
and approximates the functional Brownian motion on an unbounded domain
$[0, \infty)$ (as opposed to a bounded domain).
The last point is relevant, because approximations on an unbounded domain
can be used  to derive and justify  open-ended monitoring schemes,
as elaborated on in Section \ref{sec_mon}.

\begin{proof}   The proof has two components: It employs a blocking argument for dependent data, with a coupling for $\beta$-mixing random variables and for the details we refer to a classical argumentation in \cite{dehling:1983}
    (Section 8). 
    Our focus is on the preconditions of this argument, which pose the major theoretical challenge, but are in our case a consequence of Theorems \ref{thm1} and \ref{t:wass}. \\
    First, we need to establish that our time series $(X_n)_{n \in \mathbb{N}}$ is $\beta$-mixing with coefficients $\beta(n) \le C n^{-(2+\zeta)}$ and has bounded moments $\sup_n \mathbb{E}\|X_n\|^{2+\zeta} \le C$ for some $\zeta>0$. Both conditions are obviously fulfilled in our case. Next, the following two conditions on the sums of Banach-valued variables have to be satisfied: For some $q>2$ the inequality
    \begin{align} \label{e:mom}
        \mathbb{E}\Big\| \sum_{i=a+1}^{a+n} X_i\Big\|^q \le Cn^{q/2}, \quad \forall a \ge 0, \,\, n \ge 1
    \end{align}
    holds and for some $\tau>0$ the weak approximation
    \begin{align}\label{e:weak}
 \pi_\infty\big(P_{n}(1), W(1) \big) \le C n^{-\tau}
\end{align}
holds.
These two results are generally hard to show, but they follow immediately from our previous results. Indeed, \eqref{e:weak} is a direct consequence of Theorem \ref{thm1}, while \eqref{e:mom} follows from Theorem \ref{t:wass}. For the second claim we only consider $a=0$ (for illustration) and notice that Theorem \ref{e:mom} implies that
\begin{align*}
    \mathbb{E}\Big\| \frac{1}{\sqrt{n}}\sum_{i=1}^{n} X_i\Big\|^q = \mathbb{E}\| P_n(1)\|^q \le C\big\{\mathbb{E}\| W(1)\|^q + \mathbb{E}\| P_n(1)-W(1)\|^q\big\} \le C.
\end{align*}
    In the last step, we have used Lemma \ref{lemboundPN} to show that $\mathbb{E}\| W(1)\|^q\le C$ and Theorem  \ref{t:wass} to show that $\mathbb{E}\| P_n(1)-W(1)\|^q\le C$.
\end{proof}

$ $\\
We point out that while our proof of Corollary \ref{cor:strong2}
exploits the same blocking strategy as Theorem 3 in \cite{dehling:1983},
key technical components differ and it cannot be derived
from existing results.
One way to see this, is to consider random functions $X_1, X_2,\ldots$
with exponential moments $\sup_n \mathbb{E}\exp(\|X_n\|)< \infty$.
In this case, we need for some arbitrarily small $\epsilon>0$
the mixing assumption  $\beta(n) \le C n^{-(3+\epsilon)}$,
which is weaker than  the condition $\beta(n) \le C n^{-(8+\epsilon)}$
imposed by \cite{dehling:1983} for the same scenario.

\section{Monitoring sparse functional time series} \label{sec_mon}
The theory related to Theorem \ref{thm1} is most directly motivated by
statistical problems arising in monitoring for a change point in
a time series of random densities that  must be estimated  from
sparse scalar observations. In this section, after presenting some
background, we develop  the monitoring theory
in a more general setting.

\textbf{The monitoring paradigm} The monitoring problem was originally
formulated by
\cite{chu:stinchcombe:white:1996} in the context of linear regression.
In a monitoring regime, the user observes an incoming data stream
and tests sequentially for structural breaks
over an open-ended and theoretically infinite  time period.
Monitoring procedures have been developed for univariate and multivariate
data, and we refer to \cite{horvath:kokoszka:huskova:steinebach:2003},
\cite{aue:horvath:huskova:kokoszka:2006}
and \cite{gosmann:kley:dette:2021} for a few examples.
An informative account is given by \cite{aue:kirch:2023}.
The validation of monitoring schemes typically rests on strong
approximations of the partial sum process by a Brownian motion,
such as variants of the famous KMT
approximations (Koml{\'o}s {\it et al.}
(\cite{kmt:1975},  \cite{kmt:1976})) and related results (\cite{eberlein:1986}).
For a discussion of this subject, we refer to Section 6
of \cite{horvath:rice:2014}.

\textbf{Monitoring of  functional time series} KMT type 
approximations  exist only in  finite dimensional 
Euclidean spaces. They are not available for functional time series,
so the monitoring problem
has been much less studied in the context of functional data. 
In
\cite{aue:hormann:horvath:huskova:2014} monitoring is conducted after projecting  
Hilbert-space valued data on a finite dimensional coordinate system.  
Such an approach is not  suitable for monitoring an entire
infinite dimensional parameter, like a mean function. Another approach is adapting strong approximations, such as Corollary \ref{cor:strong2} to monitoring without dimension reduction (see \cite{kutta:kokoszka:2025}, who consider $\phi$-dependent time series), but this approach is inherently not suitable for non-stationary sparse estimators.
Therefore, we present in this section, a different approach.
We will demonstrate that the coupling given in
Corollary \ref{cor_coup} is already enough to validate monitoring.
This represents a new  strategy to the validation of monitoring schemes that has two additional benefits:
First, it allows us to work under the weaker dependence assumption
of $\alpha$-mixing rather than that of $\beta$-mixing.
To the best of our knowledge, there do not exist comparable results to
Corollary \ref{cor:strong2} on Banach spaces or even Hilbert spaces
for $\alpha$-mixing sequences. As explained in Remark \ref{rem_mixing}, this is because central coupling results available for $\beta$-mixing cannot be generalized to strong mixing in infinite dimensional spaces (see also \cite{dehling:1983b}). Second, and more importantly,
our results  hold for sparse functional estimators that form 
a triangular array with only approximately stationary rows.  
Such a  scenario  is clearly not covered by strong
convergence results such as Corollary \ref{cor:strong2}.
We also note that this strategy  can be used for different
data structures as long as a result analogous to Theorem \ref{thm1} holds. 
We proceed with  an essential mathematical outline of the monitoring
problem. For additional motivation and a more general introduction we refer
to \cite{gosmann:2020}.

\textbf{Problem statement} Consider a time series of latent functions
$(X_n)_{n \in \mathbb{N}}$ in $\mathcal{C}_0(I_1, \mathbb{R})$
with mean functions $\mu_n:= \mathbb{E}X_n$. 
We assume that there exists an initial \textit{stable period} of length $N$,
where $\mu_1=\mu_2=\ldots.=\mu_N=:\mu^{(1)}$, and we are
interested in detecting a change in the mean function afterwards.
The stable period is treated as a training data set. 
We compare the observations over  the \textit{monitoring period} to those in the 
training set. The monitoring period 
is potentially of infinite length. Asymptotics are formulated for
$N\to \infty$, i.e. for an increasing training set. We phrase
the monitoring problem as a test decision between the hypotheses
\begin{align} \label{e:hyp}
   & H_0: \mu^{(1)}=\mu_1=\ldots=\mu_N=\mu_{N+1}=\mu_{N+2}=\ldots, \\
   \textnormal{versus} \quad& H_1: \mu^{(1)}=\mu_1
   =\ldots=\mu_{N+k^*}\neq \mu^{(2)}=\mu_{N+k^*+1}
   =\mu_{N+k^*+2}=\ldots\quad \nonumber
\end{align}
for some $k^*<\infty$.
As before,  we assume that the user has no direct
access to the latent functions $X_i$ but only to sparse
estimators $X_{i,M_N} = \mathscr{A}_{M_N}(X_n, \delta_{n,M_N})$,
which improve as $N$ grows (for the  definition of the sparse estimators,
see Section \ref{sec_sparse}).
The sampling scheme can then be described as follows:
During the stable period of length $N$, the user observes
(constructs from available data) the sparse functions
$X_{1,M_N},\ldots,X_{N,M_N}$. Then, the monitoring procedure is launched,
where successively new observations $X_{N+1,M_N}, X_{N+2,M_N}, \ldots$
arrive. At each time $N+k$, the user employs the updated  sample
to decide whether to reject $H_0$
or not. 

\textbf{A change point detector} The decision is based on 
a detector $\widehat{\Gamma}(k)$ passing a threshold value $q$. 
We employ a version of the CUSUM statistic defined as
\begin{align} \label{e:level}
 \widehat{\Gamma}(k):=\bigg\| \frac{\frac{k}{N}
 \sum_{i=1}^NX_{i,M_N}-\sum_{i=N+1}^{N+k}X_{i,M_N}}{\sqrt{N}\big(1+\frac{k}{N}\big)}\bigg\|.
\end{align}
For a threshold value $q$, we reject the hypothesis $H_0$
at time $N+k$ ($k \ge 1$) if 
\begin{align} \label{e:testd}
      \widehat{\Gamma}(k)>q.
\end{align}
We will  specify $q$  to control the
type I error, also known as the  false alarm rate.  Specifically, for a 
 user-determined $\alpha \in (0,1)$, we require 
\begin{align} \label{e:level2}
 \lim_{N \to \infty}\mathbb{P}_{H_0}\Big(\sup_{k \ge 1}\widehat{\Gamma}(k)>q\Big)=\alpha.
\end{align}
 Our second aim will be to prove asymptotic power under $H_1$
 for $k^*=k^*(N)<\infty$, i.e. to show that
 \begin{align}
     \label{e:cons}
     \lim_{N \to \infty}\mathbb{P}_{H_1}\Big(\sup_{k \ge 1}\widehat{\Gamma}(k)>q\Big)=1.
 \end{align}
 The model for the latent functions is  
\begin{align} \label{e:Xn}
    X_n:= \begin{cases}
        \mu^{(1)}+\varepsilon_n, \quad n \le N+k^*,\\
        \mu^{(2)}+\varepsilon_n, \quad n > N+k^*.
    \end{cases}
\end{align}
We impose the following conditions.

\begin{ass} $ $ \label{ass_2}

\begin{itemize} 
\item[i)] (Dependence \& Data structure)   Assumptions  \ref{ass_1}  
 and  \ref{ass_1b} hold. 
    \item[iii)] (Mean functions) The functions $\mu^{(1)}, \mu^{(2)}$ may depend on $N$. Under $H_1$, they satisfy 
    \[
    \sqrt{N}\|\mu^{(1)}-\mu^{(2)}\| \to \infty.
    \]
    \item[iv)] (Mean approximation) Defining $\mu_{i,M_N}:=\mathbb{E}[X_{i,M_N}]$, it holds that
    \[
    \sup_{i\in \mathbb{N}} \sqrt{N} \|\mu_{i,M_N}-\mu_i\|=o(1).
    \]
\end{itemize}

\end{ass}
 Condition iii) ensures that  the distance between the mean before
and after the change is detectable. 
Condition iv)  guarantees that the sparse estimators $X_{i,M_N}$
have approximately the same mean functions as the unobservable 
latent functions. 
We can now formulate the main result of this section. For this purpose,
denote by $q_{1-\alpha}$ the upper $\alpha$-quantile of the distribution of
$\sup_{x \in [0,1]}\|W_\varepsilon(x)\|$, where $W_\varepsilon$ is a
centered Brownian motion on $\mathcal{C}(I_2, \mathbb{R})$
with covariance kernel
\begin{align} \label{e:lr_var}
c_\varepsilon(u,v) := & \mathbb{E}[\varepsilon_1(u)\varepsilon_1(v)]
  +2 \sum_{n \ge 2} \mathbb{E}[\varepsilon_1(u) \varepsilon_n(v)]. \nonumber
\end{align}

\begin{theorem} \label{thm_monitoring}
Suppose that Assumption \ref{ass_2} holds and that $\alpha \in (0,1)$.
Then for a threshold value $q=q_{1-\alpha}$,  the test decision \eqref{e:testd}
asymptotically holds nominal level $\alpha$ (see \eqref{e:level2}) and is
consistent (see \eqref{e:cons}).
\end{theorem}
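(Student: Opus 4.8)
The plan is to treat the level statement \eqref{e:level2} and the consistency statement \eqref{e:cons} separately, and in both cases to reduce everything to the coupling of Corollary~\ref{cor_coup} by means of a time transformation that compactifies the open--ended monitoring horizon. Writing $e_{i}:=X_{i,M_N}-\mu_{i,M_N}$ and $T_m:=\sum_{i=1}^m e_i$, a direct rearrangement of \eqref{e:level} gives, with $t:=k/N$,
\[
\widehat{\Gamma}(k)=\Big\|\tfrac{t}{1+t}\,\tfrac{T_N}{\sqrt N}-\tfrac{1}{1+t}\,\tfrac{T_{N+k}-T_N}{\sqrt N}+R_{N,k}\Big\|,
\]
where $R_{N,k}$ collects the contributions of the mean functions. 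Under $H_0$ all $\mu_i$ equal $\mu^{(1)}$, so by Condition iv) of Assumption~\ref{ass_2} one has $\|R_{N,k}\|\le 2\sqrt N\sup_i\|\mu_{i,M_N}-\mu_i\|=o(1)$ uniformly in $k$, and the detector is asymptotically equal to its centered version. The key reparametrization is $x:=k/(N+k)\in[0,1)$, under which $1+t=(1-x)^{-1}$ and $k\ge 1$ corresponds to $x$ ranging over a grid in $(0,1)$.

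\textbf{Finite--horizon coupling.} Fix $T>0$ and set $\eta:=(1+T)^{-1}$, so that $k\le TN$ corresponds to $x\le 1-\eta$. I would apply Corollary~\ref{cor_coup} to the enlarged sample $X_{1,M_N},\dots,X_{\bar N,M_N}$ of size $\bar N:=\lceil(1+T)N\rceil$; since $N\asymp\bar N$ for fixed $T$, Assumptions~\ref{ass_1}--\ref{ass_1b} transfer verbatim (the mixing, moment and H\"older bounds hold uniformly in $n,M$, and the approximation rate $N^{-\gamma}\asymp\bar N^{-\gamma}$ is preserved). This furnishes, on one probability space, a single function--valued Brownian motion $\tilde W$ with covariance $c_\varepsilon$ such that $T_N/\sqrt N$ and $(T_{N+k}-T_N)/\sqrt N$ are jointly approximated by $\tilde W$ up to an $O_{\mathbb P}(N^{-\tau})$ error. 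Consequently the centered detector process on $\{k\le TN\}$ converges, uniformly in $x\in[0,1-\eta]$, to $\|\tilde W(1)-(1+t)^{-1}\tilde W(1+t)\|$. Applying the time inversion $a\mapsto a\,\tilde W(1/a)$ (which again yields a Brownian motion with covariance $c_\varepsilon$) followed by the increment--reversal identity, this limiting process is identified in distribution with $\{\|W_\varepsilon(x)\|:x\in[0,1-\eta]\}$. Hence $\sup_{1\le k\le TN}\widehat{\Gamma}(k)\xrightarrow{d}\sup_{x\in[0,1-\eta]}\|W_\varepsilon(x)\|$.

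\textbf{Tail control and the sandwich (the main obstacle).} The genuinely delicate point is that the contribution of $k>TN$ does \emph{not} vanish: as $x\to1$ the detector approaches $\|W_\varepsilon(1)\|$, so the endpoint behaviour of the open--ended supremum must be matched exactly. I would control the monitoring fluctuation by a H\'ajek--R\'enyi type maximal inequality: splitting $\{k>TN\}$ into dyadic blocks and using the moment bound of Theorem~\ref{t:wass} (equivalently Lemma~\ref{lemboundPN}) to get $\mathbb E\max_{k\le m}\|T_{N+k}-T_N\|^2\le Cm$, one obtains
\[
\mathbb E\sup_{2^jTN<k\le 2^{j+1}TN}\Big(\tfrac{1}{1+t}\tfrac{\|T_{N+k}-T_N\|}{\sqrt N}\Big)^2\le \tfrac{C}{2^jT},
\]
which is summable in $j$ and shows $\sup_{k>TN}\|(1+t)^{-1}(T_{N+k}-T_N)/\sqrt N\|\to 0$ in probability as $T\to\infty$, uniformly in $N$. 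Since $\tfrac{t}{1+t}\,T_N/\sqrt N\approx W_\varepsilon(1)$ on $\{k>TN\}$, the tail supremum is pinned near $\|W_\varepsilon(1)\|$, which is already captured by the finite--horizon supremum at $x\approx 1-\eta$. A liminf bound from the finite--horizon convergence and a limsup bound from the tail estimate then sandwich $\sup_{k\ge1}\widehat{\Gamma}(k)$ between $\sup_{[0,1-\eta]}\|W_\varepsilon\|$ and its $\eta$--perturbation; letting $\eta\to0$ and using almost sure continuity of the sample paths of $W_\varepsilon$ (so that $\sup_{[0,1-\eta]}\|W_\varepsilon\|\uparrow\sup_{[0,1]}\|W_\varepsilon\|$) together with continuity of the distribution of $\sup_{[0,1]}\|W_\varepsilon\|$ at the positive quantile $q_{1-\alpha}$ gives $\mathbb P_{H_0}(\sup_{k\ge1}\widehat{\Gamma}(k)>q_{1-\alpha})\to\alpha$, i.e.\ \eqref{e:level2}.

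\textbf{Consistency.} Under $H_1$ the mean term no longer cancels: for $k>k^*$ the deterministic part of the numerator equals $(k-k^*)(\mu^{(1)}-\mu^{(2)})$, whence the drift component of $\widehat{\Gamma}(k)$ is $\tfrac{(k-k^*)\sqrt N}{N+k}\|\mu^{(1)}-\mu^{(2)}\|$ up to the $o(1)$ approximation error. Choosing an index $k$ of order $N\vee k^*$ makes the prefactor of order $\sqrt N$, so by Condition iii) of Assumption~\ref{ass_2} this drift diverges, while the stochastic part is $O_{\mathbb P}(1)$ uniformly by the moment and coupling bounds established for the level analysis. A reverse triangle inequality then yields $\widehat{\Gamma}(k)\to\infty$ in probability at this index, so $\sup_{k\ge1}\widehat{\Gamma}(k)>q_{1-\alpha}$ with probability tending to one, establishing \eqref{e:cons}. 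I expect the maximal--inequality tail step of the level proof to be the principal technical hurdle, since it is where the open--ended horizon, the non--vanishing endpoint, and the weak dependence of the reconstructed functions must be reconciled using only the finite--horizon coupling of Corollary~\ref{cor_coup}.
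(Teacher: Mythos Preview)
Your approach differs from the paper's and contains a gap in the tail-control step that is not repairable within a fixed-$T$ framework. The maximal inequality you invoke, $\mathbb E\max_{k\le m}\|T_{N+k}-T_N\|^2\le Cm$ with $C$ uniform in $N$, does not follow from Lemma~\ref{lemboundPN}: that lemma gives $\mathbb E\|P_m^{lin}\|^4\le Cm^{\bar\tau}$ with $\bar\tau=10\rho$, and under Assumption~\ref{ass_1b}~ii)--iii) one has $\rho>0$ (or, after tail truncation in the unbounded case, a residual logarithmic factor in the sample size). Feeding this into your dyadic H\'ajek--R\'enyi sum yields a tail bound of order $N^{\bar\tau}/T$ or $(\log N)^c/T$, which diverges as $N\to\infty$ for each fixed $T$; the double limit $N\to\infty$ followed by $T\to\infty$ therefore cannot close the sandwich. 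In the compact fixed-domain case your argument does go through, and your time-inversion identification of the finite-horizon limit with $\sup_{[0,1-\eta]}\|W_\varepsilon\|$ is correct, as is your consistency argument.

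The paper circumvents this by exploiting the Prokhorov \emph{rate} of Theorem~\ref{thm1} rather than mere weak convergence on a fixed horizon. The split is taken at $k_N=\lfloor N^{1+\zeta}\rfloor$ with $\zeta\le\tau$, and the coupling is applied to the sample of size $k_N$; after rescaling by $\sqrt{k_N/N}\le N^{\zeta/2}$ the coupling error is $O_P(N^{\zeta/2-\tau})=o_P(1)$, so the entire range $1\le k\le k_N$ is handled at once by the Brownian functional. Beyond $k_N$ the normalisation satisfies $\sqrt N/(N+k)\le k^{-\beta}$ for some $\beta>\tfrac12$ (since $k\ge N^{1+\zeta}$), so the fluctuation is dominated by $\max_{k\ge k_N}k^{-\beta}\|T_{N+k}-T_N\|$; the strict gap $\beta-\tfrac12>0$ absorbs any sub-polynomial domain factor, and a Dehling-type maximal bound gives $o_P(1)$ without requiring uniformity in $N$. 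This ``approximation on a growing domain'' device, flagged in the paragraph preceding the proof, is precisely what your fixed-$T$ strategy lacks.
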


\textbf{Main \textlabel{coupling argument}{m:c}} Before we give a
precise proof of Theorem \ref{thm_monitoring},
we want to highlight the main novelty of our proof strategy.
Sequential change point detection can be roughly divided into
two different paradigms. First, there exist approaches where
the monitoring ends after a fixed time period, proportional to the size of
the stable sample $N$, say after $TN$ observations for some $T>1$.
In such \textit{closed-end} scenarios the main probabilistic tool to
validate monitoring schemes is a weak invariance principle of the form
\begin{equation} \label{e:weakcon}
\{P_N(x)\}_{x \in [0,T]}\overset{d}{\to} \{W(x)\}_{x \in [0,T]}.
\end{equation}
Second, in this paper, we have defined monitoring as an \textit{open-ended}
process, where the procedure runs (theoretically) for an infinite time
if no change is detected. Such procedures have the practical advantage
that it is not necessary to pre-specify an end of the procedure before
it is launched. As we have mentioned above, open-ended procedures
are theoretically more challenging and require stronger probabilistic tools.
Often some sort of invariance principle on the non-compact domain $[0,\infty)$
is invoked, such as Corollary \ref{cor:strong2}, because a result of type
\eqref{e:weakcon} is insufficient for any fixed $T>1$. \\
Our main Theorem \ref{thm1} provides a  middle-ground between
these two kinds of results. It  implies existence of a coupling $P_N,W$
such that (essentially)
\[
\mathbb{P}\Big( \sup_{x \in [0,N^{\tau_1}]}\|P_N(x)-W(x)\|>CN^{-\tau_2}\Big)\le C N^{-\tau_2}
\]
for some sufficiently small numbers $\tau_1, \tau_2>0$.
This is still an approximation on a compact interval $[0,N^{\tau_1}]$,
but the size of the interval increases with $N$, making it stronger
than \eqref{e:weakcon} and sufficient to prove an open-ended monitoring
scheme. To the best of our knowledge such
\textit{approximations on a growing domain}
have not been used before to validate monitoring schemes.
Yet, they constitute an interesting alternative compared to standard methods,
specifically because results like Theorem \ref{thm1} seem to be available
under weaker assumptions than approximations on an infinite domain such
as Corollary \ref{cor:strong2}.

\begin{proof} We begin with the analysis under the null hypothesis $H_0$,  starting with some mathematical preliminaries. First, we define the partial sum process
\[
P_{N_1,N_2}(\lambda,u):= \frac{1}{\sqrt{N_2}} \sum_{i=1}^{\lfloor \lambda N_1 \rfloor} \big\{X_{i,M_{N_2}}(u)- \mu_{i,M_{N_2}}(u)\big\}, \quad \lambda \in [0,1], \,\, u \in I_2.
\]
Let now $\zeta \in (0,1/2)$ be a sufficiently small number (to be precisely determined later) and choose $k_N = \lfloor N^{1+\zeta}\rfloor$. Assumption \ref{ass_1}, Conditions i)-iii) still hold true for the array of random variables $\{X_{1,M_N},\ldots,X_{k_N,M_N}\}$ with sample size $k_N$, where Condition iii) remains valid replacing $\gamma$ by $\gamma/2$. Therewith, the constants in Assumption \ref{ass_1} do not depend on the specific choice of $\zeta \in (0,1/2)$ anymore.
We can now use Theorem \ref{thm1} which implies that there exist some $C, \tau>0$ (independent of $\zeta$) satisfying
\begin{align} \label{e:thmrate}
\pi_\infty\big(P_{k_N, N}, W_\varepsilon \big) \le C k_N^{-\tau} \le C N^{-\tau}.
\end{align}
Since $C, \tau$ do not depend on $\zeta$, we can shrink $\zeta$ to satisfy the inequality $\zeta \le \tau$. We will subsequently use the approximation rate \eqref{e:thmrate} for a suitable coupling argument.\\
For now, we define the statistic
\[
\hat \gamma :=  \max_{k \ge 1}\widehat{\Gamma}(k):= \max_{k \ge 1} \bigg\| \frac{\frac{k}{N}\sum_{i=1}^NX_{i,M_N}-\sum_{i=N+1}^{N+k}X_{i,M_N}}{\sqrt{N}\big(1+\frac{k}{N}\big)}\bigg\|.
\]
Centering the random variables $\varepsilon_{i,M_N}:= X_{i,M_N}-\mu_{i,M_N}$ we observe with a simple calculation (under $H_0$) that
\begin{align} \label{e:gamdec}
    \hat \gamma = &\max_{k \ge 1} \tilde \Gamma(k)+\mathcal{O}\Big( \sup_{i}\sqrt{N}\|\mu_{i,M_N}-\mu_i\|\Big), \quad \textnormal{where} \\
    \tilde \Gamma(k):= &\bigg\| \frac{\frac{k}{N}\sum_{i=1}^N\varepsilon_{i,M_N}-\sum_{i=N+1}^{N+k}\varepsilon_{i,M_N}}{\sqrt{N}\big(1+\frac{k}{N}\big)}\bigg\|. \nonumber
\end{align}
According to Assumption \ref{ass_2}, Condition iv) the second term on the right side of \eqref{e:gamdec} is of size $o(1)$.
The first term can be decomposed into two separate maxima, which yields
\begin{align*}
\hat \gamma = &\max(\hat \gamma_1, \hat \gamma_2) +o(1), \quad
\textnormal{where} \quad
    \hat  \gamma_1 := \max_{1 \le k \le k_N}\tilde \Gamma(k), \quad  \hat  \gamma_2 := \max_{ k \ge k_N}\tilde \Gamma(k).
\end{align*}
In the next step, we will show that $\hat  \gamma_1$, $\hat  \gamma_2$ can be expressed (except for negligible remainders) as functionals of the stochastic process $P_{k_N,N}$.
To see this, we first rewrite $\hat  \gamma_1$ as follows
\begin{align} \label{e:hatgam}
    \hat  \gamma_1=& \sqrt{k_N}\max_{1 \le k \le k_N} \bigg\| \frac{\frac{k}{N}\frac{1}{\sqrt{k_N}}\sum_{i=1}^N\varepsilon_{i,M_N}-\frac{1}{\sqrt{k_N}}\sum_{i=N+1}^{N+k}\varepsilon_{i,M_N}}{\sqrt{N}\big(1+\frac{k}{N}\big)}\bigg\|\\
    = & \sqrt{\frac{k_N}{N}}\max_{1 \le k \le k_N} \bigg\| \frac{\frac{k}{N} P_{k_N,N}(N/k_N)-[P_{k_N,N}((N+k)/k_N)-P_{k_N,N}(N/k_N)]}{\big(1+\frac{k}{N}\big)}\bigg\|.\nonumber
\end{align}
The right side is a transform of $P_{k_N,N}$.
Next, we investigate $\hat \gamma_2$. Notice that we can (using the triangle inequality) write it as
\begin{align}\label{e:hatgam2de}
\hat \gamma_2 = \max_{k \ge k_N} \bigg\| \frac{\frac{k}{N}\sum_{i=1}^N\varepsilon_{i,M_N}}{\sqrt{N}\big(1+\frac{k}{N}\big)}\bigg\| + \mathcal{O}\bigg( \max_{ k \ge k_N} \bigg\| \frac{\sum_{i=N+1}^{N+k}\varepsilon_{i,M_N}}{\sqrt{N}\big(1+\frac{k}{N}\big)}\bigg\| \bigg).
\end{align}
We further study the second term on the right of \eqref{e:hatgam2de}, which gives
\begin{align} \label{e:lilapp}
     \max_{ k \ge k_N} \bigg\| \frac{\sum_{i=N+1}^{N+k}\varepsilon_{i,M_N}}{\sqrt{N}\big(1+\frac{k}{N}\big)}\bigg\|  \le &  \max_{ k \ge k_N} \bigg\| \frac{\sqrt{N}}{k}\sum_{i=N+1}^{N+k}\varepsilon_{i,M_N}\bigg\| \\
     \le & \max_{ k \ge k_N} \bigg\| \frac{1}{k^\beta}\sum_{i=N+1}^{N+k}\varepsilon_{i,M_N}\bigg\|. \nonumber
\end{align}
On the right side, we can choose $\beta >1/2$ since $k \ge k_N = \lfloor N^{1+\zeta}\rfloor$. Using Lemma 9.2 in \cite{dehling:1983}, we can show that the right side of \eqref{e:lilapp} is $o_P(1)$ (we omit the precise calculation for sake of brevity).  
Adding this fact to \eqref{e:hatgam2de} yields
 \begin{align*}
     \hat \gamma_2 =  \max_{k \ge k_N} \bigg\| \frac{\frac{k}{N}\sum_{i=1}^N\varepsilon_{i,M_N}}{\sqrt{N}\big(1+\frac{k}{N}\big)}\bigg\| +o_P(1) = \bigg\| \frac{1}{\sqrt{N}}\sum_{i=1}^N\varepsilon_{i,M_N}\bigg\| +o_P(1) =: \bar \gamma_2 +o_P(1),
 \end{align*}
where $ \bar \gamma_2 $ is defined in the obvious way. Notice that we can rewrite
\[
\bar \gamma_2 =  \sqrt{\frac{k_N}{N}}\|P_{k_N,N}(N/k_N)\|.
\]
Now, our considerations up to this point have shown that
\[
\hat \gamma = \max(\hat \gamma_1, \bar \gamma_2) + o_P(1),
\]
where $\hat \gamma_1, \bar \gamma_2$
are transforms of the stochastic process $\{P_{k_N,N}(x): 0 \le x \le 1\}$.
The approximation result \eqref{e:thmrate} implies, that we can redefine
on a suitable probability space the processes $P_{k_N,N}$ and $W_\varepsilon $
such that
\begin{align} \label{e:coupl11}
\mathbb{P}\Big(\sup_{x \in [0,1]} \big\| P_{k_N,N}(x) -W_\varepsilon(x)\big\| \ge CN^{-\tau}\Big)\le CN^{-\tau}.
\end{align}
For this coupling result we refer again to Theorem 2.13
from \cite{huber:ronchetti:2009} (see also our Corollary \ref{cor_coup}, where the partial sum process and its linearized version can be interchanged for one another).
Recalling that by construction $k_N/ N^{1+\tau}=\mathcal{O}(1)$
implies that
\[
\sup_{x \in [0,1]} \sqrt{\frac{k_N}{N}}\|W_\varepsilon(x)-P_{k_N,N}(x)\|=\mathcal{O}_P\bigg( \sqrt{\frac{N^{1+\tau}}{N^{1+2\tau}}}\bigg)=o_P(1).
\]
We thus obtain for the variables $\hat \gamma_1, \bar \gamma_2$ that are redefined on this new probability space that
\begin{align*}
    \hat  \gamma_1 = & \tilde \gamma_1 + o_P(1), \quad \textnormal{and }\quad \bar  \gamma_2 =  \tilde \gamma_2 + o_P(1) \quad \textnormal{where}\\
    \tilde \gamma_1 := &\max_{1 \le k \le k_N} \sqrt{\frac{k_N}{N}}\frac{ \bigg\| \frac{k}{N} W_\varepsilon(N/k_N)-[W_\varepsilon((k+N)/k_N)-W_\varepsilon(N/k_N)]\bigg\|}{\big(1+\frac{k}{N}\big)},\\
    \tilde \gamma_2 := &\sqrt{\frac{k_N}{N}} \|W_\varepsilon(N/k_N)\|.
\end{align*}
Now, what remains to show is the weak convergence
\[
\max(\tilde \gamma_1, \tilde \gamma_2) \overset{d}{\to} \sup_{x \in [0,1]} \|W_\varepsilon(x)\|.
\]
This last step follows by exploiting some well-known continuity and transformation properties of the Banach-valued Brownian motion that hold in analogy to the one-dimensional Brownian motion (see, e.g. \cite{kuelbs:1973}). Hence, we can copy the steps from the univariate case, that is given in the proof of Theorem 2.1 in \cite{horvath:kokoszka:huskova:steinebach:2003}, which yields the desired result. \\
Finally, let us consider the behavior of the test statistic under the alternative, where a change occurs at $N+k^*$ with $k^*=k^*(N)$. Using the definition of $\hat \gamma$ and $\tilde \Gamma$ (see \eqref{e:gamdec}), we obtain after a short calculation under the alternative the decomposition
\begin{align*}
\hat \gamma \ge &\max_{k \ge 1} \frac{\|\max((k-k^*),0)(\mu^{(1)}-\mu^{(2)})\|}{\sqrt{N}\big(1+\frac{k}{N}\big)}\\
&-\max_{k \ge 1} \tilde \Gamma(k)-\mathcal{O}\Big( \sup_{i}\sqrt{N}\|\mu_{i,M_N}-\mu_i\|\Big).
\end{align*}
From the previous part of the proof, we already know that $\max_{k \ge 1} \tilde \Gamma(k)=\mathcal{O}_P(1)$ and from Assumption \ref{ass_2}, Condition iv) that the last term on the right is of order $o(1)$. We can hence study the first term on the right, which we can make even smaller by choosing $k=2k^*+N$. In this case, a small calculation shows that it equals
\begin{align*}
&\frac{(k-k^*)\sqrt{N}\|\mu^{(1)}-\mu^{(2)}\|}{k+N}= \sqrt{N}\|\mu^{(1)}-\mu^{(2)}\| \frac{k^*+N}{2(k^*+N)}\\
\ge &\frac{\sqrt{N}}{2}\|\mu^{(1)}-\mu^{(2)}\|\to \infty. 
\end{align*}
In the last step we have used that $\sqrt{N}\|\mu^{(1)}-\mu^{(2)}\| \to \infty$,
which holds according to Assumption \ref{ass_2}, Condition iii).
It follows that $\hat \gamma \overset{P}{\to} \infty$,
which concludes the proof.

\end{proof}

We conclude this section with some final remarks.
    In this section, we have explored the problem of sequential change point detection for sparse functional time series, using the coupling result from Corollary \ref{cor_coup}. 
    We have worked out a second statistical application of the same corollary in \cite{kutta:jach:kokoszka:2024}, where we focus more on numerical results and data applications. Roughly speaking, the aim of that work is identifying components of a functional panel, where changes have occurred.
    Many further potential applications exist. For example, the invariance principle from \cite{berkes:horvath:rice:2013} has frequently been used in proofs to show rapid convergence of change point estimators for Hilbert spaces (e.g. \cite{dette:kokot:volgushev:2020}). It is clear that our results can fill the same role for sparse functional data, even when the domain is unbounded.

\subsection*{Acknowledgements}

This work was conducted while Tim Kutta was affiliated with Colorado State University (he is currently at Aarhus University). Piotr Kokoszka was partially supported by the United States National Science Foundation
grant DMS--2123761. 

\subsection*{Declaration of interests}

The authors have no relevant financial or non-financial interests to disclose.

 \bibliographystyle{elsarticle-num} 
 \bibliography{main.bib}

\newpage

\normalsize\renewcommand{\baselinestretch}{1.0}
\setcounter{page}{1}

\normalsize\renewcommand{\baselinestretch}{1.0}
\setcounter{page}{1}

{\LARGE $ \quad$ \textbf{Supplementary Information}}

\appendix

\section{Proof of Theorem \ref{thm1} }\label{AppA}

We begin by summarizing the notation introduced in the paper 
and used throughout  the Supplementary Information.  

\begin{center}  \small 
    \begin{tabular}{|c|c|c|}
        \hline
        \textbf{Notation} & \textbf{Definition} & \textbf{Location} \\[1.2ex]
        \hline
        $X_i, X_{i,M}$ &  latent function and  sparse estimator  & Section \ref{sec_sparse}\\[1ex]
        \hline
        $P_N, P_N^{lin}$ &  partial sum process (linearly interpolated)  & eq. \eqref{e:partials},\eqref{e:partialsl} \\[1ex]
        \hline
        $W$ &  Banach-valued Brownian motion  & Theorem \ref{thm1} \\[1ex]
        \hline
        $c$ &  long-run variance kernel  & eq. \eqref{e:lr_var} \\[1ex]
        \hline

        $I_1, I_2$ & indexing intervals for latent functions  & Section \ref{sec_sparse}\\
        &  and sparse estimators & \\[1ex]
        \hline
        $\mathcal{C},\mathcal{C}_0 $ & space of continuous functions & Section \ref{sec23}\\
        & (that vanish at $\pm \infty$) &\\[1ex]
        \hline
        $L^\infty$ & space of bounded functions & Section \ref{sec23}\\[1ex]
        \hline
         $|\cdot|, |\cdot|_2$ & max-norm and Euclidean norm & Section \ref{sec_not}\\[1ex]
        \hline
        $\|\cdot\|$ & supremum norm & Section \ref{sec_not}\\[1ex]
        \hline
        $\pi, \pi_\infty, \pi_2$ & Prokhorov metric  & Section \ref{sec23}\\
        & (w.r.t. sup-/max-norm and  Euclidean norm) & \\[1ex]
        \hline
         $\mathcal{W}_q$ & $q$-Wasserstein distance & Section \ref{sec23}\\[1ex]
        \hline
        $\xi$ & H{\"o}lder constant in $(1/2,1]$  & Assumption \ref{ass_1}\\[1ex]
        \hline
        $\rho$ & constant quantifying size of domain & eq. \eqref{e:rhosig}\\[1ex]
        \hline
        $\varsigma$ & constant quantifying mesh-width & eq. \eqref{e:rhosig}\\[1ex]
        \hline
        $\tau$ & constant quantifying convergence in $\pi_\infty$ & Theorem \ref{thm1}\\[1ex] \hline
        $\kappa$ & constant of exponential tail decay & Assumption \ref{ass_1b}\\[1ex]
        \hline
        $\mathcal{G}_N, (\lambda_{\mathcal{G}_N}, u_{\mathcal{G}_N})$ & grid on $[0,1]\times [-N^\rho, N^\rho]$, & after eq. \eqref{e:rhosig}\\
        &
         closest gridpoint to $(\lambda, u)$ &\\[1ex]
        \hline
        $f^{dis}$ &  function discretized on $\mathcal{G}_N$ & after eq. \eqref{e:rhosig}\\[1ex]
        \hline
        $d_{1/2, \xi}$ & metric on $[0,1] \times [-L,L]$ & eq. \eqref{e:dmet}\\[1ex]
        \hline
    \end{tabular}
\end{center} 

\subsection{Preliminaries} \label{App_Prel}

Throughout the Supplementary Information, we will denote by $C>0$ a generic, positive constant that can change from one equation to the next. In all of of the Supplementary Information $C$, as well as the constants implied by Landau symbols, will be only dependent on the constants in Assumptions \ref{ass_1} and \ref{ass_1b}, but independent of $M$ or $N$.
As before, we denote by $|\cdot|$ the maximum norm on $\mathbb{R}^d$ for any $d \in \mathbb{N}$ and by $\|\cdot\|$ the sup-norm of functions in one or multiple variables. By $|\cdot|_2$ we will refer to the Euclidean norm on $\mathbb{R}^d$. \\
For a non-empty interval $I \subset \mathbb{R}$ we will denote by $L^\infty([0,1] \times I, \mathbb{R}^d)$ the space of bounded functions mapping from  $[0,1] \times I$ into $\mathbb{R}^d$ (see Section \ref{sec23}).
To prove weak convergence, we will frequently use the Prokhorov metric, defined in  Section \ref{sec23} for general metric spaces.
Notice that in the definition of the Prokhorov metric (see \eqref{e:def_Prokhorov}), 
the value of $\pi$ depends on the metric  of the underlying space and even 
two metrics that create the same topology might lead to different notions of 
$\pi$. In order to clarify which metric is used, we will subsequently denote 
by $\pi_\infty$ the Prokhorov metric on 
$(L^\infty([0,1] \times I, \mathbb{R}^d), 
\|\cdot \|)$ and $(\mathbb{R}^d, |\cdot|)$ (w.r.t. the sup- or max-norm) 
and by $\pi_2$ the Prokhorov metric on $(\mathbb{R}^d, |\cdot|_2)$ 
(w.r.t. the Euclidean norm). 

For our subsequent empirical process arguments, we introduce the notion of 
\textit{packing numbers}. For a subset $\mathcal{M} \subset \mathbb{R}^d$, 
a metric $d_{\mathcal{M}}$ and any $\chi >0$, the packing number
$D_{d_\mathcal{M}}(\mathcal{M},\chi)
$ is the maximum number of $\chi$-separated points in $\mathcal{M}$. Bounds for packing and covering numbers are standard tools in the theory of empirical processes; see Definition 2.2.3 in \cite{vaart:wellner:1996}. In the following, we will focus on a special case, 
where $\mathcal{M}=[0,1]\times [-L,L]$ and for $\xi \in (1/2,1]$ (the same $\xi$ as in Assumption \ref{ass_1}, part ii)
\begin{align}\label{e:dmet}
d_{1/2, \xi}((\lambda,u), (\lambda',u'))=\max(|\lambda-\lambda'|^{1/2}, |u-u'|^\xi).
\end{align}
\medskip

\subsection{Proof of Theorem \ref{thm1}} 

\textbf{The setting}  We prove the most difficult part of the theorem, where the domain is infinite, i.e. $I_1=I_2=\mathbb{R}$. The case of a growing domain follows directly from this proof, by copying all steps except for the tail truncation. The case of a fixed domain follows by similar, simpler arguments and is therefore omitted. We will focus on the case where $d=1$, since the multivariate case follows analogously.
Throughout the proofs, we assume that all random functions $X_{i,M}, X_i$ are already centered. Otherwise, one can replace $X_{i,M}$  by $X_{i,M}-\mathbb{E}X_{i,M}$ and $X_i$ by $X_{i}-\mathbb{E}X_{i}$ and check that after such a replacement Assumptions \ref{ass_1} and \ref{ass_1b} still remain valid.
Now, we consider the process
\begin{align} \label{e:PNred}
    P_{N}(\lambda,u) := \frac{1}{\sqrt{N}} \sum_{i=1}^{\lfloor \lambda N \rfloor} X_{i,M_N}(u), \quad \lambda \in [0,1], \,\, u \in \mathbb{R}.
\end{align}
In the following, let $0<\rho<\varsigma<1$ be numbers that are sufficiently small, where the precise conditions on how small will be determined in the subsequent proofs. Importantly, these numbers do not depend on $N$.\\
\textbf{Discretizations}
Next, we define the discretization of functions in $L^\infty([0,1] \times \mathbb{R}, \mathbb{R})$. Therefore, we introduce an evenly spaced grid $\mathcal{G}_N$ on $[0,1]\times [-N^{\rho}, N^\rho]$ that satisfies the two conditions on mesh width and number of gridpoints
\begin{align} \label{e:rhosig}
\max_{x \in [0,1]\times [-N^{\rho}, N^\rho]} \min_{y \in \mathcal{G}_N} |x-y| \le N^{-\varsigma}, \quad |\mathcal{G}_N| \le C N^{\rho+2\varsigma}.
\end{align}
It is easy to see that the construction of such a grid is indeed possible, using an even grid of $\lceil N^{\varsigma}\rceil$ gridpoints on $[0,1]$ and of  $\lceil 2N^{\rho+\varsigma}\rceil$ gridpoints on $[-N^{\rho},N^{\rho}]$. Now for a function $f \in L^\infty([0,1]\times \mathbb{R}, \mathbb{R})$, we define its discretization $f^{dis}$. Roughly speaking, $f^{dis}$ will be a function defined on rectangles inside  $[0,1]\times[-N^{\rho},N^{\rho}]$, with $f^{dis}(x)=f(x_{\mathcal{G}_N})$ where $x_{\mathcal{G}_N}$ denotes the closest gridpoint to $x$. Outside the compact $[0,1]\times[-N^{\rho},N^{\rho}]$ we set $f^{dis}$ to $0$. Formally, we can define $f^{dis}$ by the two properties:
\begin{itemize}
    \item[Di)] $f^{dis}(\lambda,u)=0$ if $|u|>N^{\rho}$.
    \item[Dii)] If $|u|<N^{\rho}$, $f^{dis}(\lambda,u) := f(\lambda_{\mathcal{G}_N}, u_{\mathcal{G}_N})$, where the pair $(\lambda_{\mathcal{G}_N}, u_{\mathcal{G}_N})$ denotes a closest gridpoint to $(\lambda,u)$ (w.r.t. the maximum distance). If two pairs of gridpoints have equal distance to $(\lambda,u)$, we choose the one with smaller $\lambda_{\mathcal{G}_N}$ and if these values are tied, the one with the smaller value of $u_{\mathcal{G}_N}$.
\end{itemize}
Notice that the function $f^{dis}$ implicitly depends on $N$ via the grid $\mathcal{G}_N$.
We can now define $P_N^{dis}$ as the discretized version of $P_N$ and $W^{dis}$
as the discretized version of the centered Brownian motion $W$
defined in Theorem \ref{thm1}. In order to prove Theorem \ref{thm1}, we want to show that $\pi_\infty(P_N, W_N$) vanishes at polynomial speed in $N$. Therefore, we prove for a sufficiently small number $\tau>0$ the following three statements:
\begin{itemize}
    \item[1)] Step 1:
    \begin{align} \label{e:rate1}
        \pi_\infty(P_N, P_N^{dis})= \mathcal{O}(N^{-\tau}).
    \end{align}
    \item[2)] Step 2:
    \begin{align} \label{e:rate2}
    \pi_\infty(P_N^{dis}, W^{dis})= \mathcal{O}(N^{-\tau}).
    \end{align}
    \item[3)] Step 3:
    \begin{align} \label{e:rate3}
    \pi_\infty(W^{dis}, W)= \mathcal{O}(N^{-\tau}).
    \end{align}
\end{itemize}
Each of these steps is non-trivial. For the first step, 
we need to demonstrate that $P_N$ and $P_N^{dis}$ are uniformly \
close with high probability. This requires finite sample bounds from 
empirical process theory to control the distance of stochastic processes 
on increasing domains (see, e.g.,  \cite{vaart:wellner:1996}).  
The second step is the most challenging one. Here, we exploit that 
the discretized processes live on a finite dimensional subspace of $L^\infty$ 
with slowly growing dimension. We employ normal approximations on vector 
spaces with increasing dimension for dependent random variables. 
For such results, we adapt tools developed in 
\cite{dehling:mikosch:sorensen:2002}. 
We also require some sophisticated bounds for the Prokhorov distance 
between high-dimensional Gaussian random variables, to transition 
from the finite sample variance to the asymptotic variance. In the last step, 
we exploit regularity properties of the Banach-valued Brownian motion, 
e.g. to control its modulus of continuity. To make the proofs of these 
steps easier to read, we have partitioned them into the following Lemmas.

\begin{lemma} \label{lemA1}
    Under the assumptions of Theorem \ref{thm1}, it holds for some $\tau>0$ and some $C'_1, C'_2>0$ only depending on the constants in Assumptions \ref{ass_1} and \ref{ass_1b} that
    \[
    \mathbb{P}(\|P_N^{dis}- P_N\|\ge C'_1 N^{-\tau})\le C'_2 N^{-\tau}.
    \]
\end{lemma}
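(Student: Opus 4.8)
The plan is to split the sup-norm distance at the truncation level $|u|=N^\rho$. Since $P_N^{dis}(\lambda,u)=0$ for $|u|>N^\rho$, while $P_N^{dis}(\lambda,u)=P_N(\lambda_{\mathcal{G}_N},u_{\mathcal{G}_N})$ for $|u|\le N^\rho$, the pointwise comparison yields
\[
\|P_N^{dis}-P_N\| \le \max(T_N, D_N) \le T_N + D_N,
\]
where $T_N:=\sup_{\lambda\in[0,1]}\sup_{|u|>N^\rho}|P_N(\lambda,u)|$ is the tail and $D_N:=\sup_{\lambda\in[0,1],\,|u|\le N^\rho}|P_N(\lambda,u)-P_N(\lambda_{\mathcal{G}_N},u_{\mathcal{G}_N})|$ is a modulus of continuity at the grid mesh width. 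By a union bound it then suffices to show that each of $\mathbb{P}(T_N\ge C_1' N^{-\tau})$ and $\mathbb{P}(D_N\ge C_1' N^{-\tau})$ is $\mathcal{O}(N^{-\tau})$.

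The tail $T_N$ is straightforward. Bounding the partial sum crudely by the sum of absolute values and using Jensen's inequality together with the exponential tail bound of Assumption \ref{ass_1b}, part iii), I would obtain
\[
\mathbb{E} T_N \le \frac{1}{\sqrt N}\sum_{i=1}^N \mathbb{E}\sup_{|u|>N^\rho}|X_{i,M_N}(u)| \le \sqrt{N}\, C_5^{1/2}\exp\!\big(-N^{\rho\kappa}/2\big).
\]
Since this decays faster than any power of $N$, Markov's inequality gives $\mathbb{P}(T_N\ge C_1' N^{-\tau})=o(N^{-\tau})$ for every fixed $\tau>0$.

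The modulus $D_N$ is the crux. I would control it via the maximal inequality of Theorem 2.2.4 in \cite{vaart:wellner:1996} applied with the Orlicz function $\psi(x)=x^J$ (so that $\|\cdot\|_\psi$ is the $L^J$-norm) on the index set $([0,1]\times[-N^\rho,N^\rho],d_{1/2,\xi})$. The first ingredient is an $L^J$ increment bound: decomposing $P_N(\lambda,u)-P_N(\lambda',u')$ into the spatial difference $N^{-1/2}\sum_{i\le\lfloor\lambda' N\rfloor}(X_{i,M_N}(u)-X_{i,M_N}(u'))$ and the temporal increment $N^{-1/2}\sum_{\lfloor\lambda' N\rfloor<i\le\lfloor\lambda N\rfloor}X_{i,M_N}(u)$, a Rosenthal-type moment inequality for $\alpha$-mixing sequences (valid under the mixing and moment conditions of Assumption \ref{ass_1}) bounds their $J$-th moments by $C(|u-u'|^\xi)^J$ and $C(|\lambda-\lambda'|^{1/2})^J$, respectively. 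This produces exactly
\[
\big\|P_N(\lambda,u)-P_N(\lambda',u')\big\|_J \le C\, d_{1/2,\xi}\big((\lambda,u),(\lambda',u')\big).
\]
The second ingredient is the packing-number bound $D_{d_{1/2,\xi}}([0,1]\times[-N^\rho,N^\rho],\epsilon)\le C N^\rho\epsilon^{-(2+1/\xi)}$, which follows from the product structure of the metric. As $\xi>1/2$ forces $2+1/\xi<4<J$, the entropy integral $\int_0^\eta(N^\rho\epsilon^{-(2+1/\xi)})^{1/J}\,d\epsilon$ converges; substituting this and $\delta=N^{-\varsigma/2}$ (the mesh diameter in $d_{1/2,\xi}$, using $\xi>1/2$) into Theorem 2.2.4 yields a bound on $\|D_N\|_J$ by a sum of two powers of $N$ that, after optimizing over $\eta$ and exploiting $\rho\ll\varsigma$, is of order $N^{-\tau'}$ for some $\tau'>0$. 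A final Markov inequality with $\tau$ chosen small enough that $\tau(J+1)\le J\tau'$ then gives $\mathbb{P}(D_N\ge C_1' N^{-\tau})\le C_2' N^{-\tau}$, completing the argument.

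I expect the modulus step to be the main obstacle. The delicate points are establishing the Rosenthal inequality uniformly over the (only approximately stationary) triangular array of $\alpha$-mixing sparse estimators, and balancing the domain-growth exponent $\rho$ against the mesh exponent $\varsigma$ so that the increasing index set $[-N^\rho,N^\rho]$ does not overwhelm the convergence produced by the refined grid. The entropy integral must converge and stay polynomially small simultaneously, which ties together the constants $J,\xi,\nu,\rho$ and $\varsigma$.
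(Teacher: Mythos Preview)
Your overall architecture is exactly that of the paper: split $\|P_N^{dis}-P_N\|$ into the tail $T_N$ (handled by Markov plus the exponential decay in Assumption~\ref{ass_1b}~iii)) and the grid modulus $D_N$ (handled by Theorem~2.2.4 of \cite{vaart:wellner:1996} on $([0,1]\times[-N^\rho,N^\rho],d_{1/2,\xi})$, with the packing bound $D(\epsilon)\le CN^\rho\epsilon^{-(2+1/\xi)}$). The balance of exponents you describe is also the right mechanism.

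There is, however, a genuine gap in your modulus step. You apply the chaining bound directly to $P_N$, but $P_N$ is a step function in $\lambda$, and the increment condition $\|P_N(\lambda,u)-P_N(\lambda',u)\|_J\le C|\lambda-\lambda'|^{1/2}$ that you claim is \emph{false} for this process: if $\lambda$ and $\lambda'$ lie on opposite sides of a jump point $k/N$ with $|\lambda-\lambda'|$ arbitrarily small, the increment equals $N^{-1/2}X_{k,M_N}(u)$, whose $L^J$-norm is of order $N^{-1/2}$, not $|\lambda-\lambda'|^{1/2}$. Theorem~2.2.4 requires the increment bound for \emph{all} pairs, so it does not apply to $P_N$ as stated. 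The paper repairs this by passing to the linearly interpolated process $P_N^{lin}$: one observes $P_N(\lambda,u)=P_N^{lin}(\lfloor\lambda N\rfloor/N,u)$, so $D_N$ is dominated by the modulus of continuity of $P_N^{lin}$ at scale $2N^{-\varsigma}$ (since $1/N\le N^{-\varsigma}$), and $P_N^{lin}$ \emph{does} satisfy the required increment bound in $d_{1/2,\xi}$.

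A second, smaller issue is your choice $\psi(x)=x^J$. Yoshihara-type moment inequalities for the $m$-th moment of an $\alpha$-mixing sum need $(m+\delta)$-th moments of the summands, and Assumption~\ref{ass_1} only gives $J$-th moments; so bounding the $J$-th moment of the partial sum is not justified. The paper works with $\psi(x)=x^4$ instead (applying Yoshihara with $m=4$, $\delta=J-4$), which is legitimate under Assumption~\ref{ass_1} and still makes the entropy integral converge because $\xi>1/2$ forces $(2+1/\xi)/4<1$.
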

Notice that according to Theorem 2.13 from \cite{huber:ronchetti:2009}  this result directly implies the approximation rate
in \eqref{e:rate1}. We also point out (for now and for later use of this theorem) that this direction of the Theorem does not require separability of the underlying space.

\begin{proof} Recall that according to the definition of the discretizations,  $P_N^{dis}(\lambda,u)=0$ whenever $|u|>N^\rho$. As a consequence, we can prove the Lemma, by verifying the two following statements.
\begin{itemize}
    \item[] Part i):
    $
    \mathbb{P}(\sup_{\lambda \in [0,1],|u| >N^{\rho}}|  P_N(\lambda,u)|\ge C'_1 N^{-\tau}) \le  (C'_2/2) N^{-\tau}.$
    \item[] Part ii):  $\mathbb{P}(\sup_{\lambda \in [0,1],u \in [-N^{\rho}, N^{\rho}]}|P_N(\lambda,u)-P_N^{dis}(\lambda,u)|\ge C'_1 N^{-\tau})\le  (C'_2/2) N^{-\tau} $.
\end{itemize}
\textbf{Part i):} Using Markov's inequality we obtain
\begin{align*}
& \mathbb{P}\Big(\sup_{\lambda \in [0,1],|u| >N^{\rho}}|  P_N(\lambda,u)|\ge C'_1 N^{-\tau}\Big)
\le  (C'_1)^{-1} N^{\tau} \mathbb{E}\bigg[\sup_{\lambda \in [0,1],|u| >N^{\rho}}|P_N(\lambda, u)| \bigg].
\end{align*}
Next, we employ the definition of the process $P_N$ (in \eqref{e:PNred}) to upper bound the right side by
\begin{align*} (C'_1)^{-1} N^{\tau-1/2}  \sum_{i=1}^{N} \mathbb{E}\big[\sup_{|u| >N^{\rho}}|X_{i,M_N}(u)|\big]
\le  C N^{\tau+1/2} \exp(-N^{\rho \kappa}) \le C N^{-\tau}.
\end{align*}
In the first inequality we have used the exponentially decaying tails of the sparse estimators from Assumption \ref{ass_1b}, part iii).\\
\textbf{Part ii):} By definition of the discretization, we have
\begin{align*}
    &\sup_{\lambda \in [0,1],u \in [-N^{\rho}, N^{\rho}]}|P_N(\lambda,u)-P_N^{dis}(\lambda,u)| \\
    =& \sup_{\lambda \in [0,1],u \in [-N^{\rho}, N^{\rho}]}|P_N(\lambda,u)-P_N(\lambda_{\mathcal{G}_N},u_{\mathcal{G}_N})|,
\end{align*}
where as before $(\lambda_{\mathcal{G}_N},u_{\mathcal{G}_N})$ is a closest gridpoint to $(u, \lambda)$ (for details see the construction in Dii).
In particular, this implies $(\lambda_{\mathcal{G}_N},u_{\mathcal{G}_N})$ is a gridpoint s.t. $|u-u_{\mathcal{G}_N}|, |\lambda-\lambda_{\mathcal{G}_N}|\le N^{-\varsigma}$. We hence get
 \begin{align*}
     & \mathbb{P}\Big(\sup_{\lambda \in [0,1],u \in [-N^{\rho}, N^{\rho}]}|P_N(\lambda,u)-P_N^{dis}(\lambda,u)| \ge C'_1 N^{-\tau}\Big) \\
     \le & \mathbb{P}\bigg(\max_{(\bar \lambda,  \bar u) \in \mathcal{G}_N}\sup_{\substack{\lambda \in [0,1], |u|\le N^{\rho} \\ |(\lambda, u)-(\bar \lambda, \bar u)|\le N^{-\varsigma}}}|P_N(\lambda,u)-P_N(\bar \lambda,  \bar u)| \ge C'_1 N^{-\tau}\bigg).
 \end{align*}
Now, we invoke the definition of the linearized process $P_{N}^{lin}$ (defined in \eqref{e:partialsl}), which can be explicitly written as
\begin{align} \label{e:lin}
P_{N}^{lin}(\lambda,u):= \frac{1}{\sqrt{N}} \sum_{i=1}^{\lfloor \lambda N \rfloor} X_{i,M_N}(u)+\frac{\lambda-(\lfloor N \lambda \rfloor/N ) }{\sqrt{N} } X_{\lceil N \lambda \rceil ,M_N}(u).
\end{align}
We transition to this continuous version of $P_N$, because we want to subsequently use entropy bounds that are only available for continuous processes (notice that $P_N$ is already continuous in $u$). Now, consider a pair of one gridpoint $(\bar u, \bar \lambda)$ and one point $(u, \lambda)$ with $|(\bar u, \bar \lambda)-(u, \lambda)|\le N^{-\varsigma}$. We can find natural numbers $k, \bar k$ such that $\lfloor \lambda N \rfloor =k$ and $\lfloor \bar \lambda N \rfloor =\bar k$. Then, it holds that
\[
P_N(\lambda,u) - P_N(\bar \lambda, \bar u) = P_N^{lin}(k/N, \bar u) -P_N^{lin}(\bar k/N, \bar u)
\]
and by construction that
\[
|k/N-\lambda|\le 1/N, \quad |\bar k/N-\bar \lambda|\le 1/N,
\]
which is $\le N^{-\varsigma}$. Consequently, the following upper bound holds as the supremum that we take in the second line is over a larger set of numbers than in the first line:
\begin{align} \label{e:omeg_N1}
    & \max_{(\bar \lambda,  \bar u) \in \mathcal{G}_N}\sup_{\substack{\lambda \in [0,1], |u|\le N^{\rho} \\ |(\lambda, u)-(\bar \lambda, \bar u)|\le N^{-\varsigma}}}|P_N(\lambda,u)-P_N(\bar \lambda,  \bar u)| \\[1ex]
    \le &  \sup_{\substack{|(\lambda, u)-( \lambda',  u')| \le  2N^{-\varsigma}\\  \lambda, \lambda'\in [0,1] , |u|,|u'| \le N^\rho}}|P_N^{lin}(\lambda,u)-P_N^{lin}( \lambda',   u')|=: \omega_N(P_N^{lin}). \nonumber
\end{align}
Combining the above bound with Chebychev's inequality  yields
\begin{align} \label{e:omeg_N}
& \mathbb{P}\Big(\sup_{\lambda \in [0,1],u \in [-N^{\rho}, N^{\rho}]}|P_N(\lambda,u)-P_N^{dis}(\lambda,u)| >C'_1 N^{-\tau}\Big)\\
\le & \mathbb{P}\Big(\omega_N(P_N^{lin}) >C'_1 N^{-\tau}\Big)  \le C  N^{4\tau}\mathbb{E}|\omega_N(P_N^{lin})|^4.\nonumber
\end{align}
To control the right side of \eqref{e:omeg_N}, we can analyze the modulus of continuity $\mathbb{E}|\omega_N(P_N^{lin})|^4$. Lemma \ref{lem:modcont} implies that there exists a   $\zeta>0$ that is independent of $N$ such that
\begin{align} \label{e:rateequ}
  \{\mathbb{E}|\omega_N(P_N^{lin})|^4\}^{1/4} \le C N^{-\zeta}.
\end{align}
In view of \eqref{e:omeg_N} choosing $\tau<\zeta$ then yields the desired result.

\end{proof}

\begin{lemma} \label{lem:modcont}
    Under the conditions of Lemma \ref{lemA1} and with the definition of $\omega_N(P_N^{lin})$ in \eqref{e:omeg_N1}, the bound \eqref{e:rateequ} holds for a sufficiently small $\zeta>0$.
\end{lemma}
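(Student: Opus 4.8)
The plan is to reduce the bound on $\mathbb{E}|\omega_N(P_N^{lin})|^4$ to an application of the maximal inequality for the modulus of continuity of a stochastic process in terms of its metric entropy, namely Theorem 2.2.4 in \cite{vaart:wellner:1996}, with the underlying semimetric taken to be $d_{1/2,\xi}$ from \eqref{e:dmet}. Concretely, I would work with the Orlicz norm $\psi_p(x)=x^p$ for some fixed $p\in(4,J)$; since $\big(\mathbb{E}|\omega_N|^4\big)^{1/4}\le \big(\mathbb{E}|\omega_N|^p\big)^{1/p}$ by Lyapunov's inequality, it suffices to control the $p$-th moment. The maximal inequality then requires two ingredients: (a) an increment bound $\big(\mathbb{E}|P_N^{lin}(\lambda,u)-P_N^{lin}(\lambda',u')|^p\big)^{1/p}\le C\,d_{1/2,\xi}((\lambda,u),(\lambda',u'))$ with $C$ independent of $N$, and (b) a bound on the packing numbers $D_{d_{1/2,\xi}}\big([0,1]\times[-N^\rho,N^\rho],\epsilon\big)$.

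For ingredient (a) I would treat the two coordinate directions separately and then combine them. In the time direction, $P_N(\lambda,u)-P_N(\lambda',u)$ is, up to the interpolation correction in \eqref{e:lin}, a normalized partial sum $N^{-1/2}\sum_i X_{i,M_N}(u)$ over roughly $|\lambda-\lambda'|N$ consecutive indices; a Rosenthal-type moment inequality for $\alpha$-mixing sequences yields $\mathbb{E}\big|\sum_{i=a+1}^{a+n}X_{i,M_N}(u)\big|^p\le C n^{p/2}$ uniformly in $a,N,u$, whence this increment is $\le C|\lambda-\lambda'|^{1/2}$ in $L^p$. In the space direction, $X_{i,M_N}(u)-X_{i,M_N}(u')$ again forms an $\alpha$-mixing sequence (a measurable transform of $(X_i,\delta_{i,M})$) whose $J$-th moments are controlled by the Hölder bound in Assumption \ref{ass_1}, part iii), i.e.\ $\mathbb{E}|X_{i,M_N}(u)-X_{i,M_N}(u')|^J\le C|u-u'|^{\xi J}$; the same inequality then shows that $P_N(\lambda,u)-P_N(\lambda,u')$ is $\le C|u-u'|^\xi$ in $L^p$. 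Taking a maximum of the two contributions produces exactly the $d_{1/2,\xi}$-increment bound. I expect this to be the main obstacle: the Rosenthal-type inequality must hold with a constant uniform in $N$ and $u$, and its validity at order $p$ is precisely what forces the mixing--moment trade-off $3+9/(J-4)<\nu$ together with $p<J$; carefully verifying this (and absorbing the single-term interpolation correction from \eqref{e:lin}, whose $L^p$ norm is $O(N^{-1/2})$) is the technical heart of the lemma.

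For ingredient (b), a $d_{1/2,\xi}$-ball of radius $\epsilon$ corresponds to a rectangle of sidelengths $\epsilon^{2}$ in $\lambda$ and $\epsilon^{1/\xi}$ in $u$, so covering $[0,1]\times[-N^\rho,N^\rho]$ gives $D_{d_{1/2,\xi}}(\cdot,\epsilon)\le C N^\rho\,\epsilon^{-(2+1/\xi)}$. Feeding (a) and (b) into Theorem 2.2.4 with $\psi_p^{-1}(x)=x^{1/p}$, the entropy integral $\int_0^\eta x^{-(2+1/\xi)/p}\,dx$ converges because $p>4>2+1/\xi$ (the latter since $\xi>1/2$). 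It then remains to insert the correct scale: the neighborhood $|(\lambda,u)-(\lambda',u')|\le 2N^{-\varsigma}$ defining $\omega_N$ translates, via $\xi>1/2$, into a $d_{1/2,\xi}$-radius $\delta\le C N^{-\varsigma/2}$. Optimizing the free parameter $\eta$ in the maximal inequality, the dominant contribution is of order $N^{\rho a_1/p}\,\delta^{\,a_2}$ for positive constants $a_1,a_2$ depending only on $p,\xi$; since the exponent of $\delta$ is strictly positive, this is bounded by $C N^{\rho a_1/p-(\varsigma/2)a_2}$. Choosing $\rho$ sufficiently small relative to $\varsigma$ (permissible, as $\rho<\varsigma$ are free and can be shrunk) renders the exponent negative, giving $\big(\mathbb{E}|\omega_N|^p\big)^{1/p}\le C N^{-\zeta}$ for some $\zeta>0$, and hence \eqref{e:rateequ} after a final application of Lyapunov's inequality. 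This last balancing of the growing domain $N^\rho$ against the shrinking mesh $N^{-\varsigma}$ is exactly the interplay between $\rho$ and $\varsigma$ anticipated in the proof outline.
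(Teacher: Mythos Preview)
Your proposal is correct and follows essentially the same route as the paper: establish a $d_{1/2,\xi}$-increment bound for $P_N^{lin}$ via a Rosenthal-type moment inequality for $\alpha$-mixing arrays (the paper cites Yoshihara's Theorem 3 specifically), compute the packing numbers of $[0,1]\times[-N^\rho,N^\rho]$ under $d_{1/2,\xi}$, apply Theorem 2.2.4 of \cite{vaart:wellner:1996}, and then balance $\rho$ against $\varsigma$ after converting the max-norm mesh width $2N^{-\varsigma}$ into a $d_{1/2,\xi}$-radius $\le CN^{-\varsigma/2}$. The only visible difference is cosmetic: the paper works directly with fourth moments (the entropy integral converges because $\xi>1/2$ gives $(2+1/\xi)/4<1$), whereas you pass to $L^p$ for some $p\in(4,J)$ and return via Lyapunov; this detour is harmless but unnecessary, and if you take $p$ too close to $J$ the mixing condition for the Rosenthal inequality would fail, so in practice you would also choose $p$ near $4$.
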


\begin{proof}
For the proof of this Lemma, we apply Theorem 2.2.4 in \cite{vaart:wellner:1996}, which provides a moment bound for the modulus of continuity of a stochastic process. A condition of named result is the proof of continuity of
$
\{\mathbb{E}[|P_{N}^{lin}(\lambda, u)-P_{N}^{lin}(\lambda', u')|^4]\}^{1/4}
$
w.r.t. to the metric $d_{1/2,\xi}$ and tuples $(\lambda, u)$, $(\lambda', u')$. We will now derive a bound for this object.
\\ W.l.o.g. we can assume that $\lambda>\lambda'$. In principle, we now have to consider two different cases, where either $\lfloor \lambda' N \rfloor <\lfloor \lambda N \rfloor $ or $\lfloor \lambda' N \rfloor = \lfloor \lambda N \rfloor $. For the purpose of illustration, we only focus on the first, more difficult case here. Using the representation of the linearly interpolated process $P_{N}^{lin}$ in eq. \eqref{e:lin} and the triangle inequality yields the decomposition
\begin{align*}
     &|P_{N}^{lin}(\lambda, u)-P_{N}^{lin}(\lambda', u')| \le  R_{1,N}+R_{2,N}\\
     R_{1,N}:= &|P_{N}^{lin}(\lambda, u)-P_{N}^{lin}(\lambda', u)|, \quad
       R_{2,N}:=|P_{N}^{lin}(\lambda', u)-P_{N}^{lin}(\lambda', u')|.
\end{align*}
 Using this decomposition and the upper bound $(|a|+|b|)^n \le 2^{n-1}(|a|^{n}+|b|^{n})$, we obtain
\[
\mathbb{E}[|P_{N}^{lin}(\lambda, u)-P_{N}^{lin}(\lambda', u')|^4] \le 8 \Big( \mathbb{E}[R_{1,N}^4]+ \mathbb{E}[R_{2,N}^4]\Big),
\]
and we can upper bound the fourth moments on the right separately. We begin with $R_{2,N}$. Theorem 3 in \cite{yoshihara:1978} (with parameter choices $m=4, \delta=J-4$), which is a moment inequality for sums of strongly mixing random variables, implies that
\begin{align} \label{e:yoshapp}
& \mathbb{E}\Big|\frac{1}{\sqrt{N}} \sum_{i=1}^{\lfloor \lambda' N \rfloor} \frac{X_{i,M_N}(u)-X_{i,M_N}(u')}{|u-u'|^\xi}\\
& \quad+\frac{\lambda'-(\lfloor N \lambda' \rfloor/N ) }{\sqrt{N} } \frac{X_{\lceil N \lambda' \rceil ,M_N}(u)-X_{\lceil N \lambda' \rceil ,M_N}(u')}{|u-u'|^\xi} \Big|^4\le C. \nonumber
\end{align}
The cited theorem has two conditions. First, it requires (in our case) that
\[
\mathbb{E}\Big|\frac{X_{i,M_N}(u)-X_{i,M_N}(u')}{|u-u'|^\xi}\Big|^J\le C,
\]
which holds by Assumption \ref{ass_1}, Condition iii) with $C=C_2$ uniformly in $i$ and $N$. The second
 condition of the cited theorem is $\sum_n  n \alpha(n)^{(J-4)/J}<\infty$ which holds according to Assumption \ref{ass_1b}, part iii). More precisely, with the constant $\nu>0$ from that assumption it follows that
 \begin{align} \label{e:yosh1}
 \sum_n  n \alpha_{X,\delta,M_N}(n)^{(J-4)/J} \le C  \sum_n  n n^{-\nu (J-4)/J} = C  \sum_n   n^{1-\nu (J-4)/J}.
 \end{align}
A small calculation shows that for any $J>4$ (which holds by assumption) we have
\[
\nu \frac{J-4}{J}>\Big(3+\frac{9}{J-4}\Big)\frac{J-4}{J} >2,
\]
and this implies that the right side of \eqref{e:yosh1} is bounded. Notice that we have used here the mixing rate in Assumption \ref{ass_1}, parts i) and ii). We have thus shown \eqref{e:yoshapp}, which in turn implies
\[
\mathbb{E} |R_{2,N}|^4 \le  C |u-u'|^{4 \xi}.
\]
Similar arguments can be used to show that for
$R_{1,N}$ it holds that
\[
\mathbb{E}|R_{1,N}|^4\le C  |\lambda-\lambda'|^{2}.
\]
Together, this demonstrates that
\begin{align} \label{e:QboundP}
& \{\mathbb{E}|P_{N}^{lin}(\lambda, u)-P_{N}^{lin}(\lambda', u')|^4 \}^{1/4} \le C \max(|\lambda-\lambda'|^{1/2}, |u-u'|^\xi)\\
= & C \cdot d_{1/2,\xi}((\lambda,u),(\lambda',u')). \nonumber
\end{align}
Here $d_{1/2,\xi}$ is the metric defined in \eqref{e:dmet}. Notice that by definition of this metric
\begin{align} \label{e:imp}
     &|(u, \lambda)-(u',\lambda')| \le 2N^{-\varsigma}  \\\Rightarrow &   d_{1/2,\xi}((u,\lambda), (u',\lambda'))\le \max(2N^{-\varsigma/2}, 2N^{-\varsigma\xi})
    \le  2N^{-\varsigma/2}. \nonumber
\end{align}
Let us hence define the index set
\begin{align*}
\mathcal{E}_N:= &\{((\lambda, u), (\lambda', u')): \lambda, \lambda' \in [0,1], |u|, |u'|<N^\rho,\\
& d_{1/2,\xi}((u,\lambda), (u',\lambda'))\le 2 N^{-\varsigma/2}\}.
\end{align*}
Recall the definition of the modulus of continuity in \eqref{e:omeg_N1}. By the implication \eqref{e:imp} it follows that
\[
\omega_N(P_N^{lin}) \le \sup_{((\lambda, u),( \lambda',  u')) \in \mathcal{E}_N}|P_N^{lin}(\lambda,u)-P_N^{lin}( \lambda',   u')|.
\]
The inequality holds because we have made the index set over which we take the supremum larger.\\
Next, we note a bound for the packing number of $[0,1]\times [-N^\rho, N^\rho]$ w.r.t. to the metric $d_{1/2,\xi}$ (recall the definition at the beginning of our Supplementary Information, right above eq. \eqref{e:dmet}). According to \cite{vaart:wellner:1996} (below their Definition 2.2.3) it suffices to prove a bound for the covering number, since packing and covering number are proportional to each other.
It is a standard exercise to show that the covering number of  $[0,1]\times [-N^\rho, N^\rho]$ w.r.t. the metric $d_{1/2,\xi}$ and radius $x$ can be bounded by $C x^{2+1/\xi}N^\rho$
and thus we obtain
\[
D_{d_{1/2,\xi}}([0,1]\times [-N^\rho, N^\rho], x) \le C x^{2+1/\xi}N^\rho.
\]
Now, using Theorem 2.2.4 in \cite{vaart:wellner:1996} (in the second inequality) shows that for any $\eta \in (0,1]$
\begin{align*}
& \{\mathbb{E}|\omega_N(P_N^{lin})|^4\}^{1/4} \le \Big\{\mathbb{E}\Big[\sup_{((\lambda, u),( \lambda',  u')) \in \mathcal{E}_N}|P_N^{lin}(\lambda,u)-P_N^{lin}( \lambda',   u')|^4\Big]\Big\}^{1/4}\\
\le & C \int_0^{\eta} \{D_{d_{1/2,\xi}}([0,1]\times [-N^\rho, N^\rho], x)\}^{1/4} dx \\
&+ C N^{-\varsigma/2} D_{d_{1/2,\xi}}([0,1]\times [-N^\rho, N^\rho], \eta)^{1/2}\\
\le &   C \int_0^{\eta}   x^{-(2+1/\xi)/4} N^{\rho/4} dx + C N^{-\varsigma /2}   \eta^{-(2+1/\xi)/2} N^{\rho/2}\\
\le &  C \Big( \eta^{1-(2+1/\xi)/4}N^{\rho/4}+C N^{-\varsigma/2}   \eta^{-2} N^{\rho/2}\Big).
\end{align*}
Now, choose $\eta = N^{-\rho'}$ for some $\rho'>0$. We can choose $\rho'>0$ in such a way that both terms on the right are (polynomially) decaying in $N$. This claim is easy to check, if we recall that $1-(2+1/\xi)/4<0$ by assumption (since $\xi>1/2$) and if we choose
\[
0<\rho << \rho'<<\varsigma<<1.
\]
   Now, by our above arguments, we have for a sufficiently small number, say $\zeta>0$,  that $\{\mathbb{E}|\omega_N(P_N^{lin})|^4\}^{1/4}=\mathcal{O}(N^{-\zeta})$, hence we obtain \eqref{e:rateequ}. This completes the proof. 
\end{proof}

\begin{lemma} \label{lemA2}
    Suppose that the assumptions of Theorem \ref{thm1} are met and that $\rho>0$ is sufficiently small. Then, there exists a small enough $\tau>0$ and some large enough $C>0$, such that the following approximation holds
    \begin{align*}
        \pi_\infty(P_N^{dis}, W_N^{dis}) \le C N^{-\tau}.
    \end{align*}
\end{lemma}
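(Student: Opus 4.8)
The plan is to treat both $P_N^{dis}$ and $W^{dis}$ as random vectors on the finite-dimensional space $Dis(N)$ and to exploit the slow growth of its dimension $|\mathcal{G}_N| \le C N^{\rho+2\varsigma}$. Since a discretization is constant on the grid rectangles and vanishes off $[0,1]\times[-N^\rho,N^\rho]$, the assignment of a discretized function to its vector of gridpoint values is an isometry from $(Dis(N),\|\cdot\|)$ onto $(\mathbb{R}^{|\mathcal{G}_N|},|\cdot|)$ with the max-norm. Consequently $\pi_\infty$ on $Dis(N)$ agrees with the max-norm Prokhorov metric on $\mathbb{R}^{|\mathcal{G}_N|}$, and I may argue entirely with coordinate vectors. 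Under this identification $W^{dis}$ is a centered Gaussian $\mathcal{N}(0,\tilde\Sigma_N)$ whose covariance $\tilde\Sigma_N$ is obtained by evaluating the long-run kernel $c$ from \eqref{e:lr_var} at the gridpoints, whereas $P_N^{dis}$ is a normalized partial sum built from the jointly $\alpha$-mixing vectors $V_i := (X_{i,M_N}(u_g))_{u_g}$, indexed by the $u$-gridpoints, whose exact finite-sample covariance is the matrix $\Sigma_N$ of the vector $P_N^{dis}$.

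I would then split the distance through the intermediate Gaussian $\mathcal{N}(0,\Sigma_N)$,
\[
\pi_\infty(P_N^{dis}, W^{dis}) \le \pi_\infty\big(P_N^{dis}, \mathcal{N}(0,\Sigma_N)\big) + \pi_\infty\big(\mathcal{N}(0,\Sigma_N), \mathcal{N}(0,\tilde\Sigma_N)\big),
\]
and control the two terms by different arguments. For the first term I would invoke the high-dimensional Gaussian approximation for $\alpha$-mixing sequences, namely Lemma \ref{lemdehl}, derived from Theorem 3.27 in \cite{dehling:mikosch:sorensen:2002}, applied to the partial-sum process of $(V_i)$. Its rate is polynomial in $N$ and degrades with the dimension $|\mathcal{G}_N|$, but since the required moment and smoothness bounds on the $V_i$ hold uniformly by Assumption \ref{ass_1} and the mixing coefficients decay polynomially, choosing $\rho,\varsigma$ small keeps this contribution below a fixed negative power $N^{-\tau}$, with the admissible rate governed jointly by $\nu$ and by $\rho,\varsigma$.

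For the second term, comparing two centered Gaussians, I would use the bound $\pi(\cdot,\cdot)^2 \le \mathcal{W}_2(\cdot,\cdot)$ recorded in the preliminaries, together with the elementary observation that the max-norm is dominated by the Euclidean norm, so that $\pi_\infty \le \pi_2 \le \mathcal{W}_2^{1/2}$, where $\mathcal{W}_2$ is now taken with respect to the Euclidean cost for which the Bures closed form applies. The Bures--Wasserstein distance between $\mathcal{N}(0,\Sigma_N)$ and $\mathcal{N}(0,\tilde\Sigma_N)$ can be bounded by the trace-norm discrepancy $\|\Sigma_N-\tilde\Sigma_N\|_1$ of the two covariances via the Powers--St{\o}rmer inequality \cite{powers:stormer:1970}. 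It then remains to show that this discrepancy is polynomially small: entrywise closeness of $\Sigma_N$ and $\tilde\Sigma_N$ follows from approximate weak stationarity \eqref{e:apprst} together with the summability of lagged covariances guaranteed by the mixing rate and moment bounds, and accumulating these errors over the $\le C N^{\rho+2\varsigma}$ entries stays controlled once $\rho,\varsigma$ are small.

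\textbf{The main obstacle} I anticipate is the dimension accounting. Both the Gaussian-approximation rate of Lemma \ref{lemdehl} and the trace-norm convergence $\|\Sigma_N-\tilde\Sigma_N\|_1 \to 0$ deteriorate as $|\mathcal{G}_N| = N^{\rho+2\varsigma}$ grows, so the delicate point is to choose $\rho$ and $\varsigma$ small enough that these dimension-driven losses are dominated by the central-limit and approximate-stationarity gains, while keeping $\varsigma$ large enough that the earlier discretization step in Lemma \ref{lemA1} still goes through. Establishing convergence of the finite-sample covariance to the long-run covariance in trace norm, uniformly over a grid whose cardinality grows with $N$, is the most technical part of the argument.
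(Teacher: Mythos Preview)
Your overall architecture matches the paper's proof exactly: the isometric reduction of $Dis(N)$ to $(\mathbb{R}^{|\mathcal{G}_N|},|\cdot|)$, the split through the intermediate Gaussian $\mathcal{N}(0,\Sigma_N)$, the use of Lemma \ref{lemdehl} for the first piece, and the chain $\pi_\infty \le \pi_2 \le \mathcal{W}_2^{1/2}$ followed by the Bures formula and the Powers--St{\o}rmer inequality for the Gaussian comparison are precisely the paper's tools. The entrywise covariance comparison (approximate weak stationarity plus mixing tails, accumulated over $|\mathcal{G}_N|^2$ entries) is also how the paper proceeds.

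There is, however, one genuine gap in your first piece. Lemma \ref{lemdehl} bounds $\pi_\infty\big(N^{-1/2}\sum_{i=1}^N v_i,\mathcal{N}(0,\Sigma_N)\big)$ for a \emph{single} normalized sum, not for a partial-sum process. Your vectors $V_i=(X_{i,M_N}(u_g))_{u_g}$, indexed only by the $u$-gridpoints, have the wrong dimension: the full sum $N^{-1/2}\sum_{i=1}^N V_i$ recovers only $P_N^{dis}(1,\cdot)$, not the values $P_N^{dis}(\lambda_g,\cdot)$ at all $\lambda$-gridpoints simultaneously, and ``applying Lemma \ref{lemdehl} to the partial-sum process'' is not a well-defined step. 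The paper's device is to index the summands by the full grid $\mathcal{G}_N$ and insert an indicator: for a gridpoint $x=(\lambda,u)$ one sets
\[
\vec{v}_i[d(x)] := X_{i,M_N}(u)\,\mathbb{I}\{1 \le i \le \lfloor \lambda N\rfloor\},
\]
so that the single sum $N^{-1/2}\sum_{i=1}^N \vec{v}_i$ already equals the $|\mathcal{G}_N|$-dimensional vector $\vec{P}_N^{dis}$. These $\vec{v}_i$ are measurable functions of $(X_i,\delta_{i,M_N})$ and inherit the $\alpha$-mixing rate, so Lemma \ref{lemdehl} applies with $q=|\mathcal{G}_N|$, giving the bound $CN^{-1/20}|\mathcal{G}_N|^3 \le C N^{-1/20+3(\rho+2\varsigma)}$. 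With this encoding in place, your dimension accounting and the rest of your outline go through as written.
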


\begin{proof}
    Review the definition of $P_N^{dis}$ as a bounded random function, that takes constant values on a finite number of  $|\mathcal{G}_N|\le C N^{\rho+2\varsigma}$ rectangular tiles for $(\lambda, u) \in [0,1] \times [-N^\rho, N ^\rho]$ and is $0$ for $|u|>N^\rho$ (see Di) and Dii)). By construction, for any fixed $N$, the space of functions
    \[
    Dis(N):= \{f^{dis}: f \in L^\infty([0,1]\times \mathbb{R}, \mathbb{R})\}
    \]
    is a finite dimensional subspace of $L^\infty([0,1]\times \mathbb{R})$ and it is naturally isomorphic to the space $(\mathbb{R}^{|\mathcal{G}_N|}, |\cdot|)$ (where $|\cdot|$ is, as always, the maximum norm) using the map
    \[
    \Vec{{}}:   Dis(N) \ni f \mapsto \Vec{f}:=(f(x))_{x \in \mathcal{G}_N}.
    \]
    For ease of notation, we will subsequently define for a gridpoint $x \in \mathcal{G}_N$ the corresponding dimension $d(x)$ under this isomorphism with
    \[
    f(x) = \Vec{f}[d(x)].
    \]
    By definition of the Prokhorov metric (and because the map $\,\Vec{\cdot}\,\,$ is norm preserving) it holds for two random variables $X,Y$ in $Dis(N)$ that
    \[
    \pi_\infty(X,Y) = \pi_\infty(\Vec{X},\Vec{Y}).
    \]
    Consequently it holds that
    \[
    \pi_\infty(P_N^{dis}, W_N^{dis})= \pi_\infty(\Vec{P}_N^{dis}, \Vec{W}_N^{dis})
    \]
    and we can thus derive the desired bound for the left side by bounding the right side.
    Let us now componentwise for $x = (\lambda,u) \in \mathcal{G}_N$ define the random vector
    \begin{align} \label{e:vecv}
    \vec{v}_i[d(x)] := X_{i,M_N}(u) \cdot \mathbb{I}\{1 \le i \le \lfloor \lambda N \rfloor\}.
    \end{align}
We will use a Gaussian approximation from  Lemma \ref{lemdehl} 
for $\frac{1}{\sqrt{N}}\sum_{i=1}^N \vec{v}_i$ to obtain the desired result. In this Lemma we set $\eta_1=J-3$.
    Let us denote by $\Sigma_N$ the covariance matrix of the sum $\frac{1}{\sqrt{N}}\sum_{i=1}^N \vec{v}_i$. In order to apply the approximation we have to check its two assumptions.
    \begin{itemize}
        \item[1)] There exists a $C>0$ s.t.
        \[
        \max_i\mathbb{E}|\vec{v}_i|^{J} \le \max_{i,M} \mathbb{E}\|X_{i,M}\|^{J}\le C.
        \]
        The last inequality  follows by combining Conditions ii) and iii) of Assumption \ref{ass_1}.
        \item[2)] The vectors $\vec{v}_1,\ldots,\vec{v}_N$ are strongly mixing. Indeed, since they are measurable transforms of $X_{i,M_N}$ (i.e., of $X_{i}$ and $\delta_{M_N}$) they inherit the decay of the mixing coefficients implied by Assumption \ref{ass_1}  Conditions i)  and ii), which means that
                \[
        \alpha_{v,N}(n) \le C n^{-\nu}, \qquad \textnormal{with} \qquad        \nu > 3+\frac{9}{J-3}.
        \]
    \end{itemize}
    Lemma \ref{lemdehl} now implies
    \begin{align} \label{e:Berry1}
        \pi_\infty(\Vec{P}_N^{dis}, \mathcal{N}(0, \Sigma_N))\le C N^{-1/20} |\mathcal{G}_N|^3 \le C N^{-1/20+3(\rho+2\varsigma)},
    \end{align}
    which tends to $0$ at polynomial speed if both $\rho, \varsigma$ are small enough.
    Next, the triangle inequality for the Prokhorov metric implies that
    \begin{align} \label{e:pidecmain}
        &\pi_\infty(P_N^{dis}, W_N^{dis}) =  \pi_\infty(\Vec{P}_N^{dis}, \Vec{W}_N^{dis})  \\
        \le & \pi_\infty(\Vec{P}_N^{dis}, \mathcal{N}(0, \Sigma_N)) + \pi_\infty(\Vec{W}_N^{dis}, \mathcal{N}(0, \Sigma_N)) \nonumber \\
        =& \mathcal{O}(N^{-1/20+3(\rho+2\varsigma)})+\pi_\infty(\Vec{W}_N^{dis}, \mathcal{N}(0, \Sigma_N)). \nonumber
    \end{align}
     In view of \eqref{e:pidecmain} , the remaining step to show the decay of $\pi(P_N^{dis}, W_N^{dis})$ is to prove a polynomial decay rate of $\pi(\Vec{W}_N^{dis}, \mathcal{N}(0, \Sigma_N))$ in $N$.  Therefore, notice that the vector $\Vec{W}_N^{dis}$ is a centered Gaussian and can thus be characterized by a covariance matrix $\tilde \Sigma_N$. We can specify this matrix by considering a pair of gridpoints $x=(\lambda,u), x' =(\lambda',u')$ in $ \mathcal{G}_N$ and noticing that
     \[
     \tilde \Sigma_N[d(x),d(x')] := \min(\lambda, \lambda') \cdot c(u,u'),
     \]
    where $c$ is again the covariance kernel of the Brownian motion $W$ defined in \eqref{e:lr_var}. Now, let us calculate for the fixed gridpoints $x,x'$ the corresponding entry $ \Sigma_N[d(x),d(x')] $.  For this calculation let us again assume that $\lambda'<\lambda$. Recall the constant  $\gamma>0$  from Assumption \ref{ass_1} part iii) and notice that we can assume w.l.o.g. that $\gamma<1$. In this proof, we will assume that $N^{-\gamma/2}<\lambda'$. The case where this is not so is much easier (essentially one Cauchy-Schwarz inequality) and is therefore omitted. We can now decompose the covariance of the sum $\frac{1}{\sqrt{N}}\sum_{i=1}^N \vec{v}_i$ (where $\vec{v}_i$ is defined in \eqref{e:vecv}).
    \begin{align*}
          \Sigma_N[d(x),d(x')] = &\mathbb{E}\Big[\Big(\frac{1}{\sqrt{N}} \sum_{i=1}^{\lfloor \lambda N\rfloor }X_{i,M_N}(u)\Big)\Big(\frac{1}{\sqrt{N}} \sum_{i=1}^{\lfloor \lambda' N\rfloor }X_{i,M_N}(u')\Big)\Big]\\
         = &  R_1 + R_2+R_3, \\
         R_1 := & \frac{1}{N}\sum_{|i-j|=0, i,j \le \lfloor \lambda' N\rfloor} \mathbb{E}[X_{i,M_N}(u)X_{j,M_N}(u')]\\
         &+ \frac{1}{N}\sum_{|i-j|=1, i,j \le \lfloor \lambda 'N\rfloor}  \mathbb{E}[X_{i,M_N}(u)X_{j,M_N}(u')]+\ldots\\
         & + \frac{1}{N}\sum_{|i-j|=\lfloor N^{\gamma/2}\rfloor-1, i,j \le \lfloor \lambda' N\rfloor}  \mathbb{E}[X_{i,M_N}(u)X_{j,M_N}(u')]\\
         R_2 := & \frac{1}{N}\sum_{|i-j|\ge \lfloor N^{\gamma/2}\rfloor, i,j \le \lfloor \lambda' N\rfloor}  \mathbb{E}[X_{i,M_N}(u)X_{j,M_N}(u')]\\
         R_3 := &\mathbb{E}\Big[\Big(\frac{1}{\sqrt{N}} \sum_{i=1}^{\lfloor \lambda' N\rfloor }X_{i,M_N}(u)\Big)\Big(\frac{1}{\sqrt{N}} \sum_{i=\lfloor \lambda' N\rfloor +1}^{\lfloor \lambda N\rfloor }X_{i,M_N}(u')\Big)\Big].
    \end{align*}
    In the following, $R_1$ will be the dominating term, while $R_2,R_3$ are negligible. To analyze $R_1$ notice that
    \begin{align*}
       & \mathbb{E}[X_{i,M_N}(u)X_{j,M_N}(u')]\\
       =& \mathbb{E}[X_i(u) X_{j,M_N}(u')]+\mathbb{E}[(X_{i,M_N}(u)-X_i(u))X_{j,M_N}(u')] \\
        =& \mathbb{E}[X_i(u) X_j(u')] + \mathbb{E}[X_i(u) (X_{j,M_N}(u')-X_j(u'))]\\
        &+\mathbb{E}[(X_{i,M_N}(u)-X_i(u))X_{j,M_N}(u')] \\
        =& \mathbb{E}[X_1(u) X_{|i-j|+1}(u')] + \mathcal{O}( \sup_{i}\{\mathbb{E} \|X_i-X_{i,M_N}\|^2\}^{1/2} )\\
        =& \mathbb{E}[X_1(u) X_{|i-j|+1}(u')]  + \mathcal{O}(N^{-\gamma}),
    \end{align*}
    where we have used the weak stationarity of the latent functions, together with the approximation rate in Assumption \ref{ass_1} part iii). As a consequence, we observe that
    \begin{align*}
        & R_1 =  \frac{\lfloor \lambda' N \rfloor}{N}\mathbb{E}[X_1(u) X_{1}(u')]\\
        &+\frac{2\lfloor \lambda' N \rfloor}{N} \sum_{i=1}^{\lfloor N^{\gamma/2}\rfloor} \Big\{\Big( 1-\frac{i}{N}\Big)\mathbb{E}[X_1(u) X_{i+1}(u')] \Big\}+\mathcal{O}(N^{-\gamma/2}) \\
        = & \lambda' \mathbb{E}[X_1(u) X_{1}(u')]+2\lambda' \sum_{i=1}^{\lfloor N^{\gamma/2}\rfloor} \Big\{\Big( 1-\frac{i}{N}\Big)\mathbb{E}[X_1(u) X_{i+1}(u')] \Big\}\\
        &+\mathcal{O}(N^{-\gamma/2}) + \mathcal{O}(N^{-1+\gamma/2}) \\
        = & \lambda' \mathbb{E}[X_1(u) X_{1}(u')] + 2\lambda' \sum_{i=1}^\infty
        \mathbb{E}[X_1(u) X_{i+1}(u')] \\
        &-2\lambda' \sum_{i=\lfloor N^{\gamma/2}\rfloor+1}^\infty
        \mathbb{E}[X_1(u) X_{i+1}(u')] +\mathcal{O}(N^{-\gamma/2}) + \mathcal{O}(N^{-1+\gamma}).
    \end{align*}
    Now, combining the mixing rate in Assumption \ref{ass_1} part i) with Lemma 3.11 in \cite{dehling:mikosch:sorensen:2002}, we observe that
    \begin{align*}
        \mathbb{E}[X_1(u) X_{n+1}(u')] \le C \{\mathbb{E}\|X_1\|^{J}\}^{2/J} \{\alpha_X(n)\}^{(J-2)/J} \le C n^{-\nu (J-2)/J}.
    \end{align*}
    As by Assumption \ref{ass_1}, part ii) it holds that $\nu (J-2)/J>1$ it follows that
    \[
    \sum_{i=N^{\lfloor \gamma/2\rfloor}+1}^\infty
        \mathbb{E}[X_1(u) X_{i+1}(u')] =\mathcal{O}(N^{(1-\nu (J-2)/J)(\gamma/2)}).
    \]
    Hence, we have shown that for a sufficiently small number, say $\gamma_2>0$, it holds that
    \[
    R_1 = \lambda' c(u,u')+ \mathcal{O}(N^{-\gamma_2})=\min(\lambda, \lambda')c(u,u') + \mathcal{O}(N^{-\gamma_2}),
    \]
    and the approximation rate holds uniformly over $(u, \lambda), (u',  \lambda') \in \mathcal{G}_N$. Using analogous techniques, we can show that
    \[
    R_2 = \mathcal{O}(N^{-\gamma_2}).
    \]
    In order to  show that $R_3$ is also polynomially decreasing in $N$, consider a sequence of natural numbers $(b_N)_{N \in \mathbb{N}}$ with $b_N:=\lfloor N^{1/3}\rfloor $. Then,  it holds that
    \begin{align*}
        & \bigg|\mathbb{E}\bigg[\Big(\frac{1}{\sqrt{N}} \sum_{i=1}^{\lfloor \lambda' N\rfloor }X_{i,M_N}(u)\Big)\Big(\frac{1}{\sqrt{N}} \sum_{i=\lfloor \lambda' N\rfloor +1}^{\lfloor \lambda N\rfloor }X_{i,M_N}(u')\Big)\bigg] \bigg| \le B_{1,N} + B_{2,N}\\
       & B_{1,N}:=  \bigg|\mathbb{E}\bigg[\Big(\frac{1}{\sqrt{N}} \sum_{i=1}^{\lfloor \lambda' N\rfloor }X_{i,M_N}(u)\Big)\Big(\frac{1}{\sqrt{N}} \sum_{i=\lfloor \lambda' N\rfloor +1}^{\lfloor \lambda' N\rfloor +b_N}X_{i,M_N}(u')\Big)\bigg] \bigg|\\
        &B_{2,N}:= \bigg|\mathbb{E}\bigg[\Big(\frac{1}{\sqrt{N}} \sum_{i=1}^{\lfloor \lambda' N\rfloor }X_{i,M_N}(u)\Big)\Big(\frac{1}{\sqrt{N}} \sum_{i=\lfloor \lambda' N\rfloor +b_N +1}^{\lfloor \lambda N\rfloor }X_{i,M_N}(u')\Big)\bigg] \bigg|.
    \end{align*}
    Now, we will prove that both terms $B_{i,N}$ are polynomially decreasing in $N$. For the first one, we employ the Cauchy-Schwarz inequality to obtain
    \begin{align*}
         & B_{1,N} \le \bigg\{\mathbb{E}\bigg[\Big(\frac{1}{\sqrt{N}} \sum_{i=1}^{\lfloor \lambda' N\rfloor }X_{i,M_N}(u)\Big)^2\bigg]\mathbb{E}\bigg[\Big(\frac{1}{\sqrt{N}} \sum_{i=\lfloor \lambda' N\rfloor +1}^{\lfloor \lambda' N\rfloor +b_N}X_{i,M_N}(u')\Big)^2\bigg]\bigg\}^{1/2}\\
         =&\mathcal{O}(1) \times \mathcal{O}(b_N^{1/2}N^{-1/2})=\mathcal{O}(N^{-1/6}).
    \end{align*}
    To upper bound the moments of the sums, we have used once more Theorem 3 in \cite{yoshihara:1978}, where the conditions of named theorem are easily seen to be satisfied in the above setup. To upper bound $B_{2,N}$, notice that  we have according to Lemma 3.11 in \cite{dehling:mikosch:sorensen:2002}
    \begin{align*}
        B_{2,N}\le  &\bigg\{\mathbb{E}\bigg[\Big|\frac{1}{\sqrt{N}} \sum_{i=1}^{\lfloor \lambda' N\rfloor }X_{i,M_N}(u)\Big|^{4}\bigg]\\
        & \,\,\,\times   \mathbb{E}\bigg[\Big|\frac{1}{\sqrt{N}} \sum_{i=\lfloor \lambda' N\rfloor +b_N +1}^{\lfloor \lambda N\rfloor }X_{i,M_N}(u')\Big)\Big|^{4}\bigg]\bigg\}^{1/2} (\alpha_{X,\delta,M_N}(b_N))^{1/2}.
    \end{align*}
    The sums on the right are again bounded according to Theorem 3 in \cite{yoshihara:1978}. The key assumption of the cited Theorem is that
    \[
    \sum_{n=1}^\infty  n^{4/2-1} [\alpha(n)]^{(J-4)/J} \le C_\alpha<\infty.
    \]
    This condition follows by Assumption
    \ref{ass_1b} part iii) and a small calculation. Recall that $\alpha_{X,\delta,M_N}(n) \le C n^{- \nu}$ for
    \[
    \nu>3+\frac{9}{J-4}.
    \]
    To show convergence of the series, it will suffice to show that
    \[
     n^{4/2-1} [\alpha_{X,\delta,M_N}(n)]^{(J-4)/J} \le C n^{1-\big[3+\frac{9}{J-4}\big]\frac{J-4}{J}}<C n^{-(1+\zeta)}
    \]
    for some $\zeta>0$. We focus only on the exponents and observe that
    \begin{align*}
        &1-\Big[3+\frac{9}{J-4}\Big]\frac{J-4}{J} = 1- \frac{3(J-4)+9}{J} = -\frac{2J+9-12}{J}\\
        =& -\frac{2J-3}{J}>-\frac{J+1}{J},
    \end{align*}
where in the last step we have used that $J>4$. Now, we return to upper bounding $B_{2,N}$ and notice that the
     factor $(\alpha_{X,\delta,M_N}(b_N))^{1/2}$ is polynomially decaying in $N$. More precisely
    \[
    (\alpha_{X,\delta,M_N}(b_N))^{1/2} \le C N^{-\nu/6}
    \]
    where we have used the definition of $b_N = \lfloor N^{1/3} \rfloor$ and the bound $\alpha_{X,\delta,M_N}(n) \le Cn^{-\nu}$.
    Potentially shrinking $\gamma_2>0$ we thus obtain $R_3 =  \mathcal{O}(N^{-\gamma_2})$, and by our above arguments
    \[
    \Sigma_N[d(x),d(x')] = R_1 +R_2+R_3 = \tilde \Sigma_N[d(x),d(x')] +  \mathcal{O}(N^{-\gamma_2}).
    \]
    Since the $\mathcal{O}$-term was independent of $x,x'$ we obtain
    \begin{align} \label{e:matappr}
    \max_{x,x'}|\Sigma_N(d(x),d(x'))-\tilde \Sigma_N(d(x),d(x'))| \le C N^{-\gamma_2}.
    \end{align}
    Now, let us consider the Prokhorov distance
    \[
    \pi_\infty(\Vec{W}_N^{dis}, \mathcal{N}(0, \Sigma_N)) = \pi_\infty(\mathcal{N}(0, \tilde \Sigma_N), \mathcal{N}(0, \Sigma_N)).
    \]
    Since the Euclidean norm $|\cdot|_2$ is stronger than the maximum norm, it follows that
    \[
    \pi_\infty(\mathcal{N}(0, \tilde \Sigma_N), \mathcal{N}(0, \Sigma_N)) \le \pi_2(\mathcal{N}(0, \tilde \Sigma_N), \mathcal{N}(0, \Sigma_N)),
    \]
    where as defined before, the right side refers to the Prokhorov metric w.r.t. the probability measures on $(\mathbb{R}^{|\mathcal{G}_N|}, |\cdot|_2)$. It is well known (see, e.g. \cite{gibbs:su:2002}), that the Prokhorov metric can be further upper bounded by the Wasserstein metric
    \[
    \pi_2(\mathcal{N}(0, \tilde \Sigma_N), \mathcal{N}(0, \Sigma_N))^2 \le \mathcal{W}_2(\mathcal{N}(0, \tilde \Sigma_N), \mathcal{N}(0, \Sigma_N)).
    \]
    In turn, the Wasserstein metric between the two Gaussian measures can be expressed as
    \begin{align} \label{e:wasserb}
    \mathcal{W}_2(\mathcal{N}(0, \tilde \Sigma_N), \mathcal{N}(0, \Sigma_N)) =  Tr\Big[ \tilde \Sigma_N+ \Sigma_N-2\big( \tilde \Sigma_N^{1/2}\Sigma_N\tilde \Sigma_N^{1/2}\big)^{1/2}\Big].
    \end{align}
    Here $Tr$ refers to the trace of a matrix and $()^{1/2}$ to the canonical root (for details see \cite{panaretos:zemel:2020}, Section 1.6.3).
    Recalling that the matrices $\tilde \Sigma_N,  \Sigma_N$ act on a space of dimension $|\mathcal{G}_N|\le C N^{\rho+2\varsigma}$ the inequality \eqref{e:matappr}
    implies
    \begin{align}
        \label{e:matbound1}
    \|\tilde \Sigma_N-\Sigma_N\|_{Tr} \le &  C |\mathcal{G}_N|^{2} \max_{x,x'}|\Sigma_N[d(x),d(x')]-\tilde \Sigma_N[d(x),d(x')]|  \\
    \le & C N^{2\rho+4\varsigma-\gamma_2}. \nonumber
    \end{align}
    Here $\|\cdot\|_{Tr}$ denotes the trace norm of a matrix and for subsequent reference we denote by $\|\cdot\|_2$ the Frobenius norm. In particular, we obtain
    \begin{align}\label{e:matbound1b}
    Tr\Big[ \tilde \Sigma_N+ \Sigma_N\Big] = 2 Tr\Big[ \tilde \Sigma_N\Big] + \mathcal{O}(N^{2\rho+4\varsigma-\gamma_2}).
    \end{align}
    For the next step, we use a proximity result for the square roots of matrices, found in Lemma 4.1 of \cite{powers:stormer:1970}. It states for any two covariance matrices $\Sigma_1, \Sigma_2$ that
    \[
    \|\Sigma_1^{1/2}-\Sigma_2^{1/2}\|_2^2 \le \|\Sigma_1-\Sigma_2\|_{Tr}.
    \]
    Since the root of a covariance matrix is again a covariance this also implies
    \[
    \|\Sigma_1^{1/2}\|_2^2 \le \|\Sigma_1\|_{Tr}.
    \]
    We also notice that for square matrices it holds that $\|\Sigma_1\Sigma_2\|_{Tr} \le \|\Sigma_1\|_{Tr} \|\Sigma_2\|_{2}$.
    In the following calculations we also use the fact that for a covariance matrix $\Sigma$ of dimension $q \times q$ the well-known identities hold:
    \begin{align*}
        \|\Sigma\|_2 \le \|\Sigma\|_{Tr} \le \sqrt{q}\|\Sigma\|_{2} \quad \textnormal{and} \quad \|\Sigma\|_{Tr} \le q^2 \max_{1 \le \ell, k \le q} \Sigma[\ell, k].
    \end{align*}
    Using these results implies
\begin{align}\label{e:matbound2}
    &\big\|\big( \tilde \Sigma_N^{1/2}\Sigma_N\tilde \Sigma_N^{1/2}\big)^{1/2}-\tilde \Sigma_N\big\|_{Tr}  \le |\mathcal{G}_N|^{1/2} \big\|\big( \tilde \Sigma_N^{1/2}\Sigma_N\tilde \Sigma_N^{1/2}\big)^{1/2}-\big(\tilde \Sigma_N^2\big)^{1/2}\big\|_{2} \\
    \le & |\mathcal{G}_N|^{1/2} \sqrt{\|\tilde \Sigma_N^{1/2}\Sigma_N\tilde \Sigma_N^{1/2}-\tilde \Sigma_N^2\|_{Tr} } \le  |\mathcal{G}_N|^{1/2}  \sqrt{\|\tilde \Sigma_N^{1/2}\|_2^2 \| \Sigma_N - \tilde\Sigma_N\|_{Tr}} \nonumber \\
    \le &  |\mathcal{G}_N|^{3/2}\sqrt{\|\tilde \Sigma_N\|_{Tr}\max_{x,x'}|\Sigma_N[d(x),d(x')]-\tilde \Sigma_N[d(x),d(x')]|}\nonumber \\
    \le &  |\mathcal{G}_N|^{5/2}\sqrt{\max_{x,x'}|\Sigma_N[d(x),d(x')]-\tilde \Sigma_N[d(x),d(x')]|}
    = \mathcal{O}(N^{5/2(\rho+2\varsigma)-\gamma_2/2}). \nonumber
    \end{align}
    Together, \eqref{e:matbound1b} and \eqref{e:matbound2} imply that
    \[
    \eqref{e:wasserb} \le \mathcal{O}(N^{2\rho+4\varsigma-\gamma_2}) + \mathcal{O}(N^{5/2(\rho+2\varsigma)-\gamma_2/2}).
    \]
    Choosing $\rho, \varsigma$ sufficiently small, the right side tends to $0$ polynomially in $N$.
    Together with the decomposition \eqref{e:pidecmain}, this now yields the desired result for a sufficiently small choice of $\tau>0$. 
\end{proof}

\begin{lemma} \label{lemA3}
     Suppose that the assumptions of Theorem \ref{thm1} are met. Then for some sufficiently small $\tau>0$ and some fixed $C_1', C_2'>0$ (only depending on the constants in Assumptions \ref{ass_1} and \ref{ass_1b})
     \[
     \mathbb{P}(\|W^{dis}- W\|>C_1' N^{-\tau}) <C_2' N^{-\tau}.
     \]
\end{lemma}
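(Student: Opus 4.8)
The plan is to mirror the decomposition used for $\|P_N - P_N^{dis}\|$ in Lemma \ref{lemA1}: since $W^{dis}$ vanishes for $|u| > N^\rho$ and equals $W$ evaluated at the nearest gridpoint for $|u| \le N^\rho$, I would split
\[
\|W - W^{dis}\| \le T + D, \quad T := \sup_{\lambda \in [0,1], |u| > N^\rho} |W(\lambda,u)|, \quad D := \sup_{\substack{\lambda \in [0,1] \\ |u| \le N^\rho}} |W(\lambda,u) - W(\lambda_{\mathcal{G}_N}, u_{\mathcal{G}_N})|,
\]
and show that each term is $\mathcal{O}(N^{-\tau})$ outside an event of probability $\mathcal{O}(N^{-\tau})$. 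The techniques differ genuinely from those in Lemma \ref{lemA1}, because here I would exploit the Gaussianity of $W$ directly rather than moment inequalities for mixing sums.

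For the tail term $T$, the first step is to prove that the diagonal of the long-run covariance kernel decays exponentially, i.e.\ $c(u,u) \le C e^{-\theta |u|^\kappa}$ for some $\theta > 0$. Writing $c(u,u) = \mathbb{E}[X_1(u)^2] + 2\sum_{n \ge 2}\mathbb{E}[X_1(u) X_n(u)]$, the diagonal term is bounded by $C_5 e^{-|u|^\kappa}$ using weak stationarity and the tail moment condition of Assumption \ref{ass_1b} iii). For the cross terms I would interpolate between two bounds: the Cauchy--Schwarz bound $|\mathbb{E}[X_1(u)X_n(u)]| \le C_5 e^{-|u|^\kappa}$ (again via the tail condition and stationarity) and the mixing covariance bound $|\mathbb{E}[X_1(u) X_n(u)]| \le C \alpha_X(n)^{(J-2)/J}$ from Lemma 3.11 in \cite{dehling:mikosch:sorensen:2002}. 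Since $\min(A,B) \le A^\theta B^{1-\theta}$, choosing $\theta > 0$ small enough that $\sum_n \alpha_X(n)^{(1-\theta)(J-2)/J} < \infty$ --- which is possible because $\nu(J-2)/J > 1$ by Assumption \ref{ass_1} --- yields the claimed exponential spatial decay after summation. Consequently $\sup_{\lambda \in [0,1], |u| > N^\rho}\mathrm{Var}(W(\lambda,u)) = \sup_{|u|>N^\rho} c(u,u) \le C e^{-\theta N^{\rho\kappa}}$, and a standard Gaussian concentration argument (the Borell--TIS inequality together with Fernique's bound \cite{fernique:1975} on $\mathbb{E}\|W\|$) shows that $T$ is super-polynomially small on a super-polynomially likely event.

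For the oscillation term $D$, the key is the increment bound $\{\mathbb{E}|W(\lambda,u) - W(\lambda',u')|^2\}^{1/2} \le C\, d_{1/2,\xi}((\lambda,u),(\lambda',u'))$ with $d_{1/2,\xi}$ as in \eqref{e:dmet}; this uses Brownian scaling in $\lambda$ and the H\"older smoothness of $c$ inherited from the H\"older moment condition of Assumption \ref{ass_1} ii). Since the nearest gridpoint lies within mesh-width $N^{-\varsigma}$ of $(\lambda,u)$, $D$ is dominated by the modulus of continuity of $W$ at scale $N^{-\varsigma}$ over $[0,1]\times[-N^\rho,N^\rho]$. I would control this using the concentration inequalities for the modulus of continuity of the Banach-valued Brownian motion in the proofs of \cite{acosta:1985} (equivalently, a Gaussian chaining bound via the covering numbers of $[0,1]\times[-N^\rho,N^\rho]$ already computed in Lemma \ref{lem:modcont}). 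This gives $D = \mathcal{O}(N^{-\varsigma'})$ for some $\varsigma' > 0$, up to an entropy factor polynomial in the domain radius $N^\rho$; choosing $\rho \ll \varsigma$ keeps this factor dominated so that $D$ decays polynomially with high probability.

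The hard part will be the tail term, and specifically the exponential decay of the long-run variance on the tail: neither bound suffices alone --- Cauchy--Schwarz carries spatial decay but is not summable in $n$, while the mixing bound is summable but spatially flat --- so the whole argument hinges on the interpolation and on verifying that a positive $\theta$ can be chosen below the summability threshold $1 - J/(\nu(J-2))$. A secondary subtlety is that $D$ is taken over a growing domain, so the entropy contribution must be absorbed by taking the mesh fine ($\varsigma$ large) relative to the domain growth ($\rho$ small), exactly the balancing already required in Step 1.
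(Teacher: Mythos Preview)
Your decomposition $\|W-W^{dis}\|\le T+D$ and the treatment of $D$ via Gaussian chaining match the paper's strategy, and your interpolation argument for the long-run variance decay $c(u,u)\le Ce^{-\theta|u|^\kappa}$ is correct and in fact cleaner than the paper's truncation (the paper splits the sum $\sum_n\mathbb{E}[X_1(u)X_n(u)]$ at a cutoff $(j+N)^B$, using the tail moment bound for early terms and the mixing bound for the remainder, rather than interpolating term by term).

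There is, however, a genuine gap in your tail argument. Borell--TIS gives $\mathbb{P}(T>\mathbb{E}[T]+t)\le 2\exp(-t^2/2\sigma^2)$ with $\sigma^2=\sup_{|u|>N^\rho}c(u,u)$, but to conclude that $T$ is small you need $\mathbb{E}[T]$ to be small, not merely finite. Fernique's theorem only yields $\mathbb{E}[T]\le\mathbb{E}\|W\|<\infty$, a fixed constant; plugging this into Borell--TIS gives $T\le C+t$ on the good event, which is useless. The exponential decay of the \emph{pointwise} variance does not by itself control the supremum over the unbounded set $\{|u|>N^\rho\}$ --- you still need an entropy/chaining argument to pass from pointwise to uniform. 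The paper handles this by partitioning $[N^\rho,\infty)$ into unit intervals $I_j^{(N)}$, placing a grid of $j+\lfloor N^\varsigma\rfloor$ points on each, controlling the gridpoint values via the variance bound and the between-gridpoint oscillation via a $Q$th-moment chaining bound (Theorem 2.2.4 in \cite{vaart:wellner:1996}), and then summing over $j$; the $\lambda$-direction is finished separately via a grid on $[0,1]$ and the H\"older continuity of the Banach-valued Brownian motion from \cite{acosta:1985}. Your exponential bound on $c(u,u)$ would slot into such a shell decomposition and actually simplify it, but some shell-by-shell argument is unavoidable.
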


\begin{proof}
    We first derive for fixed but arbitrary $\lambda \in [0,1]$ and  $\chi>0$ a bound on the tail probability
   $ \mathbb{P}(\sup_{|u|>N^{\rho}}|W(\lambda,u)|>\chi)$. We can decompose this probability into
   \begin{align*}
   &\qquad \quad\,\mathbb{P}\Big(\sup_{|u|>N^{\rho}}|W(\lambda,u)|>\chi\Big) \le  A_1+A_2\\
   &A_1:= \mathbb{P}\Big(\sup_{u>N^{\rho}}|W(\lambda,u)|>\chi\Big), \quad  A_2:=\mathbb{P}\Big(\sup_{u<-N^{\rho}}|W(\lambda,u)|>\chi\Big).
   \end{align*}
   For the purpose of illustration, we focus on the first term $A_1$. Let us define the intervals
   \[
   I^{(N)}_1:=[\lfloor N^\rho\rfloor, \lfloor N^\rho\rfloor + 1], I^{(N)}_2:=[\lfloor N^\rho\rfloor+1, \lfloor N^\rho\rfloor+2],\ldots
   \]
   Then, for the interval $I_j^{(N)}$ define a total of $j+\lfloor N^\varsigma \rfloor $ gridpoints that are evenly spread across the interval s.t. for any $u \in I_j^{(N)}$ there exists a gridpoint $g_j(u)$ with
   \[
    |u-g_j(u)|\le (j+\lfloor N^\varsigma \rfloor )^{-1}.
   \]
   Now, we observe the following upper bound
   \begin{align}\label{e:splitpro}
       &\mathbb{P}\Big(\sup_{u>N^{\rho}}|W(\lambda,u)|>\chi\Big) \le  \sum_{j \ge 1} \mathbb{P}\Big(\sup_{u \in I_j^{(N)}}|W(\lambda,u)|>\chi\Big) \\
       \le  &\sum_{j \ge 1}  A_{1,1,j} + A_{1,2,j}, \nonumber\\
         A_{1,1,j}:=  &  \mathbb{P}\Big(\sup_{u \in I_j^{(N)}}|W(\lambda,u)-W(\lambda,g_j(u))|>\chi/2\Big), \nonumber\\
         A_{1,2,j}:= & \mathbb{P}\Big(\sup_{u \in I_j^{(N)}}|W(\lambda,g_j(u))|>\chi/2\Big).\nonumber
   \end{align}
   We consider $A_{1,2,j}$  first. Recall that by $c$ (defined in \eqref{e:lr_var}), we denote the covariance kernel of the Brownian motion $W$. A small calculation yields that
   \begin{align*}
       A_{1,2,j}  \le &(j+\lfloor N^\varsigma \rfloor) \max_{u \in I_j^{(N)}} \mathbb{P}(|W(\lambda,g_j(u))|>\chi/2)  \\
       \le &C (j+N^\varsigma) \chi^{-2} \max_{u \in I_j^{(N)}}\mathbb{E}[|W(\lambda,g_j(u))|^2] \\
       \le & C (j+N^\varsigma)\chi^{-2} \max_{u \in I_j^{(N)}}c(g_j(u),g_j(u)).
   \end{align*}
   Now, let us study $c(g_j(u),g_j(u))$. Let $B>0$ be a fixed number to be determined later. Using the definition of $c$, we observe that it can be upper bounded as follows:
   \begin{align*}
      & c(g_j(u),g_j(u)) \le  2 \sum_{n \ge 1} |\mathbb{E}[X_1(g_j(u))X_n(g_j(u))]| \\\le &2 \lfloor (j+N)^B \rfloor  \mathbb{E}[(X_1(g_j(u))^2] + 2 \sum_{n \ge \lfloor (j+N)^B \rfloor} |\mathbb{E}[X_1(g_j(u))X_n(g_j(u))]|.
   \end{align*}
   We have thus split up $c(g_j(u),g_j(u))$ in a (small) number of "early terms" and an infinite sum of "later terms".
   Using first the tail condition in Assumption \ref{ass_1b} part iii) and second  that $u \in I_j^{(N)}$ implies $g_j(u) \ge  j+\lfloor N^{\rho} \rfloor -1$, we obtain as a bound for the early terms
   \begin{align*}
   &2 \lfloor (j+N)^B \rfloor \mathbb{E}(X_1(g_j(u))^2 \\
   \le & C \lfloor (j+N)^B \rfloor \exp(-g_j(u)^{\kappa}) \le  C \lfloor (j+N)^B \rfloor \exp(-|j+\lfloor N^{\rho} \rfloor -1|^{\kappa}).
   \end{align*}
    Next, we upper bound the tail of the later terms, yielding
\begin{align*}
    & \sum_{n \ge \lfloor (j+N)^B \rfloor} |\mathbb{E}[X_1(g_j(u))X_n(g_j(u))]| \\
    \le &   \sum_{n \ge \lfloor (j+N)^B \rfloor}\sup_\ell\{\mathbb{E}\|X_\ell\|^J\}^{2/J} \alpha_X(n)^{(J-2)/J} \\
    \le & \sum_{n \ge \lfloor (j+N)^B \rfloor} C n^{-\nu (J-2)/J} \le C (j+N)^{B(1-\nu (J-2)/J))}.
\end{align*}
 Putting these bounds together shows that
   \begin{align*}
    &\sum_{j \ge 1} A_{1,2,j}
   \le \sum_{j \ge 1} \big\{C \chi^{-2} (j+N^\varsigma)\big\} \\
   &
   \times \Big[  \big\{C  \lfloor (j+N)^B \rfloor \exp(-|j+\lfloor  N^{\rho} \rfloor -1|^{\kappa})\big\}+ \big\{C(j+N)^{B(1-\nu (J-2)/J))}\big\} \Big].
   \end{align*}
   Here, we have used the mixing inequality from Lemma 3.11 in \cite{dehling:mikosch:sorensen:2002} and the mixing assumption from Assumption \ref{ass_1}.
   Choosing $B$ sufficiently large (notice that we can choose it arbitrarily large) we obtain that the right side is of size $\mathcal{O}(\chi^{-2}N^{-1})$. Notice that this rate is not sharp, but it suffices for our subsequent investigations. Now, we have derived a decay rate for the second sum in \eqref{e:splitpro} and can turn to the first one. We obtain by Markov's inequality
   \[
  A_{1,1,j} \le C\chi^{-Q} \mathbb{E}[\sup_{u \in I_j}|W(\lambda,u)-W(\lambda,g_j(u))|^Q].
   \]
    Here,  $Q>0$ is an even natural number to be determined later. To upper bound the right side, we want to use again Theorem 2.2.4 in \cite{vaart:wellner:1996}, a bound on the modulus of continuity for stochastic processes. For this purpose, we notice that for any $u,u'$
   \begin{align*}
   \mathbb{E}|W(\lambda,u)-W(\lambda,u')|^Q \le  C\{\mathbb{E}|W(\lambda,u)-W(\lambda,u')|^2\}^{Q/2}.
   \end{align*}
   Here we have exploited that the marginals of the Brownian motion are normally distributed. The constant $C$ can here be explicitly given as $(Q-1)!!$. Now we can further bound the second moment by
   \[
   \mathbb{E}|W(\lambda,u)-W(\lambda,u')|^2 \le \mathbb{E}|W(1,u)-W(1,u')|^2 = c(u,u)+c(u',u')-2c(u,u').
   \]
    We can now show that, e.g., the function $c(u,u)-c(u,u')$ is Hölder continuous in the second component. Therefore, simply observe that by definition of $c$ we have
    \begin{align*}
        \frac{|c(u,u)-c(u,u')|}{|u-u'|^\xi}  \le & 2 \sum_{i \ge 1} \bigg|\mathbb{E}\bigg[\frac{(X_1(u)-X_1(u'))}{|u-u'|^\xi}X_i(u)\bigg]\bigg| \\
        \le & 2 \sum_{i \ge 1} \mathbb{E}\bigg|\frac{(X_1(u)-X_1(u'))}{|u-u'|^\xi}\bigg|^J\mathbb{E}|X_i(u)|^J\Big\}^{1/J} \alpha_X(i)^{(J-2)/J}.
    \end{align*}
   Using the mixing Conditions i) and ii)  in Assumption \ref{ass_1}, implies that the sum on the right side is uniformly bounded. With our above considerations this yields
   \[
   \{\mathbb{E}|W(\lambda,u)-W(\lambda,u')|^Q\}^{1/Q} \le C |u-u'|^{\xi}
   \]
   for any even $Q$.
   Now, using Theorem 2.2.4 in \cite{vaart:wellner:1996}, we obtain for any constant $\eta>0$
   \begin{align*}
       & \{\mathbb{E}[\sup_{u \in I_j}|W(\lambda,u)-W(\lambda,g_j(u))|^Q]\}^{1/Q}\\
       \le & C \int_0^{\eta} x^{1/(\xi Q)} dx + C (j+N^\varsigma)^{-\xi} \eta^{-2/(\xi Q)} \le C \eta^{1-1/(\xi Q)} + C(j+N^\varsigma)^{-\xi} \eta^{-2/(\xi Q)}.
   \end{align*}
   Now, choose $\eta=(j+N^\varsigma)^{-1}$ and $Q$ so large that $2/(\xi Q)<\xi/2$. Then the right side is of size $C (j+N^\varsigma)^{-\xi/2}$.
   To see this, recall that $\xi\le 1$ and notice that
    \begin{align*}
        & -\xi + 2/(\xi Q)<-\xi + \xi/2 = -\xi/2\qquad \textnormal{and}\\
        & -1+ 1/(\xi Q) <-1+\xi/4 \le -3/4 <-1/2\le -\xi/2.
    \end{align*}
   As a consequence, we obtain for any $j$ that
   \[
   \mathbb{E}[\sup_{u \in I_j}|W(\lambda,u)-W(\lambda,g_j(u))|^Q] \le C (j+N^\varsigma)^{-\xi Q/2}
   \]
   and hence, since (by our above choice of $Q$) $\xi Q/2>2$ we get
   \begin{align*}
      &  \sum_{j \ge 1} A_{1,1,j}
      \le  \sum_{j \ge 1}C \chi^{-Q} (j+N^\varsigma)^{-\xi Q/2} \\
      \le & C\sum_{j \ge 1}\chi^{-Q}(j)^{-(\xi Q/2-1)} N^{-\varsigma} = \mathcal{O}(N^{-\varsigma}\chi^{-Q}).
   \end{align*}
   So, up to this point, we have shown that
   \[
   A_1 \le   \sum_{j \ge 1}  A_{1,1,j} + A_{1,2,j} \le  C \big[ \chi^{-2}N^{-1} + \chi^{-Q} N^{-\varsigma}\big].
   \]
   Now choosing $\chi=N^{-\varsigma'}$ for a $\varsigma'=\varsigma/(2Q)$, the fact that $\varsigma \le 1$ and $Q \ge 2$ even, shows that
   \[
   \mathbb{P}\Big(\sup_{|u|>N^{\rho}}|W(\lambda,u)|>\chi\Big)= \mathcal{O}(N^{-\varsigma/2}).
   \]
   Next, we consider an even grid $\mathcal{G}^{\lambda}_N$ on the interval $[0,1]$ with gridsize $ N^{-\varsigma/4} $. By our above calculations we have
   \[
   \mathbb{P}\Big(\max_{\lambda \in \mathcal{G}^{\lambda}_N}\sup_{|u|>N^{\rho}}|W(\lambda,u)|>\chi\Big)=\mathcal{O}( N^{-\varsigma/4}).
   \]
  Now, we can leverage the fact that Brownian motion as a process indexed in $[0,1]$ on the space $\mathcal{C}_0(\mathbb{R}, \mathbb{R})$ is Hölder continuous for a constant $(0,1/2)$ (just as the real valued Brownian motion). In particular, for a sufficiently small choice of $\varsigma''>0$ we have
  \[
  \mathbb{P}\Big(\sup_{|u|>N^{\rho}}\sup_{|\lambda'-\lambda|\le N^{-\varsigma/4}, \lambda \in \mathcal{G}^{\lambda}_N}|W(\lambda,u)-W(\lambda',u)|>N^{-\varsigma''}\Big)\le C N^{-\varsigma''}.
  \]
  This claim follows directly from the proof of Lemma 3.1 of \cite{acosta:1985} (the named proof actually shows the stronger result of an exponentially decaying probability, which however is not required for our investigation). Putting these observations together shows for a sufficiently small constant $\tau>0$ (depending on $\varsigma, \varsigma''$) that
  \begin{align*}
   & \mathbb{P}\Big(\sup_{|u|>N^{\rho}} \sup_\lambda |W(u,\lambda)-W^{dis}(u,\lambda)|>CN^{-\tau}\Big)\\
   = & \mathbb{P}\Big(\sup_{|u|>N^{\rho}} \sup_\lambda |W(u,\lambda)|>CN^{-\tau}\Big) \le C N^{-\tau}.
  \end{align*}
  Next, we consider the difference $W(u,\lambda)-W^{dis}(u,\lambda)$ for indices with $|u|<N^{\rho}$. Recalling the definition of the discretization function, it follows that
  \[
  |W(u,\lambda)-W^{dis}(u,\lambda)| \le \sup_{|(u,\lambda)-(u',\lambda')|\le N^{-\varsigma}}|W(u,\lambda)-W(u',\lambda')|.
  \]
  We can now investigate the modulus of continuity  on the right side. The proof follows by another application of the bound in Theorem 2.2.4 in \cite{vaart:wellner:1996} and we omit the details. It shows for a sufficiently small $\tau>0$ that
  \[
  \mathbb{P}\Big(\sup_{|(u,\lambda)-(u',\lambda')|\le N^{-\varsigma}}|W(u,\lambda)-W(u',\lambda')|>N^{-\tau}\Big)\le C N^{-\tau},
  \]
  completing the proof. 
\end{proof}

\bigskip \smallskip

\noindent{\sc Proof of Theorem \ref{thm1}.}
Lemmas \ref{lemA1}, \ref{lemA2}, \ref{lemA3} imply respectively the identities \eqref{e:rate1}, \eqref{e:rate2},  \eqref{e:rate3}. We recall therefore -again that proximity of two random variables, say of $P_N^{dis}$ and $P_N$ in probability as shown in Lemma \ref{lemA1}, implies proximity in the Prokhorov metric, as discussed in Theorem 2.13 from \cite{huber:ronchetti:2009}.
Using identities \eqref{e:rate1}, \eqref{e:rate2},  \eqref{e:rate3} together with the triangle inequality for the Prokhorov metric yields the desired result.  \hfill $\square$\\

\section{Additional lemmas }

\subsection{Approximation results} $ $

 Lemma \ref{lemboundPN} is  used in the proof of Theorem \ref{t:wass}. There, we exploit that $\bar \tau = 6\rho$ is small compared to $\tau$. Notice that the size of $\tau$ in the previous proofs does not increase when making $\rho$ smaller. We also notice that in named proof, $q<4$ can be brought arbitrarily close to $4$ if $\rho$ is sufficiently small (in particular, in the case of $I_2=\mathbb{R}$, we could make $\rho$ arbitrarily small).

\begin{lemma} \label{lemboundPN}
    Under the conditions of Theorem \ref{t:wass} it holds for $\bar \tau =10 \rho$ that
    \[
\mathbb{E}\big[\|P_N^{lin}\|^4\big]\le C N^{\bar \tau}, \qquad \mathbb{E}\big[\|W\|^4\big] \le C.
\]
In particular, $\bar \tau$ can be made arbitrarily small, for sufficiently small $\rho$.
\end{lemma}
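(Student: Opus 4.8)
The plan is to prove the two estimates separately, the bound on $\mathbb{E}\|P_N^{lin}\|^4$ being the substantive one. I would follow the same chaining strategy as in Lemma \ref{lem:modcont}, except that now I must control the \emph{full} supremum of $P_N^{lin}$ rather than a restricted modulus of continuity. First I would split the index set into the compact block $[0,1]\times[-N^\rho,N^\rho]$ and its complement, using
\[
\|P_N^{lin}\|\le \sup_{\lambda\in[0,1],\,|u|\le N^\rho}|P_N^{lin}(\lambda,u)|+\sup_{\lambda\in[0,1],\,|u|>N^\rho}|P_N^{lin}(\lambda,u)|,
\]
together with the elementary inequality $(a+b)^4\le 8(a^4+b^4)$.

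For the compact block I would exploit that $P_N^{lin}(0,u)\equiv 0$, since the summation index set in \eqref{e:partials} is empty at $\lambda=0$. Hence the supremum over $[0,1]\times[-N^\rho,N^\rho]$ equals a supremum of increments anchored at the base point $(0,0)$, and Theorem 2.2.4 in \cite{vaart:wellner:1996} applies exactly as in Lemma \ref{lem:modcont}. The pointwise fourth-moment control furnished by Theorem 3 in \cite{yoshihara:1978}, combined with the H\"older estimate \eqref{e:QboundP}, shows that the $L^4$-norm of the increments is Lipschitz with respect to the metric $d_{1/2,\xi}$, and the packing bound $D_{d_{1/2,\xi}}([0,1]\times[-N^\rho,N^\rho],x)\le C x^{-(2+1/\xi)}N^\rho$ drives the entropy integral. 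Because $\xi>1/2$ forces $(2+1/\xi)/4<1$, the integral converges at $0$; evaluated over a domain of $d_{1/2,\xi}$-diameter of order $N^{\rho\xi}$ it is of order $N^{\rho/4}\cdot N^{\rho\xi(1-(2+1/\xi)/4)}$, so that $\{\mathbb{E}\sup_{|u|\le N^\rho}|P_N^{lin}|^4\}^{1/4}\le C N^{\rho\xi/2}$. Raising to the fourth power yields a bound of order $N^{2\xi\rho}\le N^{2\rho}$, comfortably below $N^{10\rho}$.

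For the tail I would use that $P_N^{lin}$ is a convex combination of partial sums, whence $\sup_{|u|>N^\rho}|P_N^{lin}(\lambda,u)|\le N^{-1/2}\sum_{i=1}^N\sup_{|u|>N^\rho}|X_{i,M_N}(u)|$, and Jensen's inequality gives $\mathbb{E}\sup_{|u|>N^\rho}|P_N^{lin}|^4\le N^2\max_i\mathbb{E}[\sup_{|u|>N^\rho}|X_{i,M_N}(u)|^4]$. The exponential tail condition in Assumption \ref{ass_1b}(iii) controls the second moment, and interpolating against the uniform $J$-th moment bound with $J>4$ (log-convexity of $L^p$-norms) upgrades this to $\mathbb{E}[\sup_{|u|>N^\rho}|X_{i,M_N}(u)|^4]\le C\exp(-cN^{\rho\kappa})$, so the tail contribution is super-polynomially small and is absorbed into $CN^{\bar\tau}$. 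The second estimate is immediate: $W$ is a centered Gaussian element of the separable Banach space $\mathcal{C}_0([0,1]\times I_2,\mathbb{R}^d)$, so by Fernique's theorem \cite{fernique:1975} it has a finite exponential-square moment and in particular $\mathbb{E}\|W\|^4<\infty$; since its covariance is the fixed long-run kernel \eqref{e:lr_var} (and in the increasing-domain case $\|W\|$ over $I_2(N)$ is dominated by $\|W\|$ over all of $\mathbb{R}$, which is finite under the exponential-tail condition), this constant is uniform in $N$. I expect the only genuine bookkeeping to be tracking the exponent produced by the entropy integral over the growing domain to confirm that $\bar\tau=10\rho$ suffices; everything else reduces to the tools already deployed in Lemma \ref{lem:modcont} and to Fernique's theorem.
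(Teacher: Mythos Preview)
Your proposal is correct and follows essentially the same route as the paper: split into the compact block $|u|\le N^\rho$ and the tail, anchor the compact supremum at $P_N^{lin}(0,\cdot)=0$, apply Theorem~2.2.4 of \cite{vaart:wellner:1996} with the metric $d_{1/2,\xi}$ and the packing bound from Lemma~\ref{lem:modcont}, dispose of the tail via the exponential decay in Assumption~\ref{ass_1b}(iii), and invoke Fernique for $W$.

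The only substantive difference is bookkeeping in the entropy estimate. The paper chooses $\eta=1$ and bounds the $d_{1/2,\xi}$-diameter crudely by $N^{2\rho}$, so the second term in the chaining bound contributes $N^{2\rho}\cdot D(1)^{1/2}\le CN^{5\rho/2}$, giving $\mathbb{E}[R_1]\le CN^{10\rho}$ and hence the stated $\bar\tau=10\rho$. You instead integrate the entropy up to the true diameter $\sim N^{\rho\xi}$, obtaining the sharper exponent $\rho\xi/2$ for $\{R_1\}^{1/4}$ and thus $\mathbb{E}[R_1]\le CN^{2\xi\rho}\le CN^{2\rho}$. Both are valid; yours is tighter but the paper's crude bound already suffices for the stated lemma. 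Your explicit interpolation between the second moment (exponential tail) and the $J$-th moment to upgrade the tail estimate to fourth moments is also more detailed than the paper's brief pointer to Lemma~\ref{lemA1}(i).
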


\begin{proof}
It follows directly from the theorem of \cite{fernique:1975} that $\mathbb{E}\big[\|W\|^4\big] \le C.$ Hence, we focus on $\mathbb{E}[\|P_N^{lin}\|^4]$.
First, we notice that
\begin{align*}
    &\mathbb{E}[\|P_N^{lin}\|^4] =  \mathbb{E}\big[\sup_{u \in \mathbb{R}, \lambda \in [0,1]} |P_N^{lin}(\lambda, u)|^4 \big] \\
    \le & \mathbb{E}\big[\sup_{|u|\le N^\rho, \lambda \in [0,1]} |P_N^{lin}(\lambda, u)|^4 \big] +  \mathbb{E}\big[\sup_{|u|>N^\rho , \lambda \in [0,1]} |P_N^{lin}(\lambda, u)|^4 \big]\\
    = &  \mathbb{E}\big[\sup_{|u|\le N^\rho, \lambda \in [0,1]} |P_N^{lin}(\lambda, u)-P_N^{lin}(0,0)|^4 \big]+\mathbb{E}\big[\sup_{|u|>N^\rho , \lambda \in [0,1]} |P_N^{lin}(\lambda, u)|^4 \big]\\
    =: &R_1+R_2.
\end{align*}
In the second step, we have used that $P_N(0,0)=0$ (empty partial sum).  We can now analyze $R_1$ and $R_2$ separately. Exactly as in beginning of the proof of Lemma \ref{lemA1} (step i), we can establish that $R_2=o(1)$ and we can hence focus on $R_1$. Recall the definition of the metric \eqref{e:dmet}.
In the proof of Lemma \ref{lem:modcont}, we have established the continuity result \eqref{e:QboundP}. Together with Theorem 2.2.4 in \cite{vaart:wellner:1996} it implies that
\begin{align*}
    \{R_1\}^{1/4} \le & C \int_0^{1} \{D_{d_{1/2, \xi}}([0,1]\times [-N^\rho,N^\rho],x)\}^{1/4} dx\\
    & + C N^{2\rho } D_{d_{1/2, \xi}}([0,1]\times [-N^\rho,N^\rho],1)^{1/2}\\
\le &  C \int_0^{1}   x^{-(2+1/\xi)/4} N^{\rho/4} dx + C N^{5\rho/2} \le C N^{ 5\rho/2}.
\end{align*}
Thus, the claim of the Lemma follows.
\end{proof}

\medskip

Lemma \ref{lemdehl} is used for the proof of the high-dimensional Gaussian approximation in Lemma \ref{lemA2}.

\begin{lemma} \label{lemdehl}
    Suppose that $v_1, v_2,\ldots \in \mathbb{R}^q$ is a sequence of random vectors, satisfying for some $\eta_1>0$ the moment condition $\mathbb{E}|v_i|^{3+\eta_1}\le C'_1$. Moreover, suppose the vectors are strongly mixing with coefficients $\alpha_v(n) \le C'_2  n^{-\eta_2}$ where
    $
    \eta_2 >3+9/\eta_1.
    $
    Then, for $N \in \mathbb{N}$ define $\Sigma_N$ as the covariance matrix of the sum $\frac{1}{\sqrt{N}}\sum_{i=1}^N v_i$. It holds that
    \[
    \pi_\infty\bigg( \frac{1}{\sqrt{N}}\sum_{i=1}^N v_i,\mathcal{N}\big(0, \Sigma_N\big)\bigg) \le C
    N^{-1/20}q^{3}\]
    where $C$ only depends on $C'_1, C'_2, \eta_1, \eta_2$.
\end{lemma}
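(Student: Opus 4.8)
The plan is to obtain the bound as a quantitative, dimension-explicit corollary of the multivariate Gaussian approximation for strongly mixing random vectors, namely Theorem 3.27 in \cite{dehling:mikosch:sorensen:2002}. That theorem furnishes, for a fixed dimension, a polynomial rate of convergence in the Prokhorov metric between the normalized sum $\frac{1}{\sqrt{N}}\sum_{i=1}^N v_i$ and the centered Gaussian with its exact covariance $\Sigma_N$. The whole difficulty here is that in our application $q = |\mathcal{G}_N|$ grows with $N$, so the constant cannot be treated as fixed; instead I would reopen the proof of the cited theorem and track how every error term scales with $q$, arriving at the stated factor $q^3$.

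First I would verify the two hypotheses of the cited theorem. The moment bound $\mathbb{E}|v_i|^{3+\eta_1} \le C'_1$ supplies the required $(3+\eta_1)$-th absolute moments, and the mixing assumption $\alpha_v(n) \le C'_2 n^{-\eta_2}$ with $\eta_2 > 3 + 9/\eta_1$ is exactly the decay needed so that the block-weighted mixing series appearing in the proof converge; this is the same arithmetic that is carried out in \eqref{e:yosh1} and in the verification surrounding $B_{2,N}$ in the proof of Lemma \ref{lemA2}. These checks are routine and require no new ideas.

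The core of the argument, and the main obstacle, is the explicit dependence on $q$. The proof of the Gaussian approximation proceeds by a big-block/small-block decomposition: one couples the big blocks to independent copies at a cost controlled by the mixing coefficients, applies a Berry--Esseen bound to the independent blocks, and discards the small blocks. Each of these steps contributes a factor that is polynomial in $q$ rather than uniform: the Lindeberg-type replacement error accumulates the coordinatewise third moments through a triple sum over coordinates, and passing from a smoothed comparison to the genuine Prokhorov distance requires controlling the Gaussian mass of thin max-norm shells around sets (a boundary effect that costs further powers of $q$). Collecting these contributions gives the claimed $q^3$. The hard part is precisely to keep these powers bounded and to confirm that no exponential-in-$q$ factor is hidden in the Gaussian smoothing step.

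Finally, I would optimize the free block-length parameters to balance the three error terms --- the coupling error, the block CLT error, and the small-block remainder. Under the borderline mixing and moment conditions admitted here, the best attainable balance yields the universal polynomial rate $N^{-1/20}$; a sharper exponent is possible under stronger assumptions but is unnecessary, since $N^{-1/20}$ multiplied by $q^3 = |\mathcal{G}_N|^3 \le C N^{3(\rho + 2\varsigma)}$ already decays once $\rho, \varsigma$ are taken small enough, which is all that Lemma \ref{lemA2} requires.
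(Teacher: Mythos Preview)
Your approach is correct in spirit---both you and the paper rely on Theorem 3.27 in \cite{dehling:mikosch:sorensen:2002}---but you take a much more laborious route than necessary. You propose to reopen the proof of that theorem and track the dimension dependence through every step of the big-block/small-block construction, the Lindeberg replacement, and the Gaussian smoothing. The paper instead applies Theorem 3.27 as a black box after a simple norm conversion: since $|v|_2 \le \sqrt{q}\,|v|$, the max-norm moment bound $\mathbb{E}|v_i|^{3+\eta_1}\le C_1'$ yields the Euclidean-norm moment bound $\mathbb{E}|v_i|_2^{3+\eta_1} \le C_1'\, q^{(3+\eta_1)/2}$, and feeding this inflated moment constant into the cited theorem directly produces the factor $q^3$ in the bound for $\pi_2$. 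The final step is the trivial inequality $\pi_\infty \le \pi_2$ (since the Euclidean norm dominates the max norm on $\mathbb{R}^q$), which transfers the bound to $\pi_\infty$. Your route would likely also succeed and might even give sharper control of the power of $q$, but at the cost of substantially more work and a genuine risk of losing track of constants in the smoothing step you flagged; the paper's argument fits in a few lines and sidesteps that difficulty entirely.
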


\begin{proof}
    The result is a direct consequence of Theorem 3.27 from \cite{dehling:mikosch:sorensen:2002}. If $\mathbb{E}|v_i|^{3+\eta_1}\le C'_1$ this implies
    \[
    \mathbb{E}|v_i|_2^{3+\eta_1}\le q^{(3+\eta_1)/2}\mathbb{E}|v_i|^{3+\eta_1} \le C'_1 q^{(3+\eta_1)/2}.
    \]
    Now, w.l.o.g. assuming that $C'_1\ge 1$, named theorem implies
    \begin{align}\label{e:dehlest}
     \pi_2\bigg( \frac{1}{\sqrt{N}}\sum_{i=1}^N v_i,\mathcal{N}\big(0, \Sigma_N\big)\bigg) \le C N^{-1/20} q^{3}.
    \end{align}
    Finally, since the Euclidean norm is stronger than the maximum norm, it follows that
    \[
    \pi_\infty\bigg( \frac{1}{\sqrt{N}}\sum_{i=1}^N v_i,\mathcal{N}\big(0, \Sigma_N\big)\bigg) \le \pi_2\bigg( \frac{1}{\sqrt{N}}\sum_{i=1}^N v_i,\mathcal{N}\big(0, \Sigma_N\big)\bigg),
    \]
    yielding the desired result.
\end{proof}

\subsection{Examples of sparse functional data} \label{sec_ex} $ $

We now show  that the estimators listed in Example \ref{ex1}
satisfy Assumption \ref{ass_1}, Condition iii) in an appropriate way.
We denote by $\nabla^{(b)} f$ the $b$th derivative of a function $f$. We impose the following condition for all three examples.

\begin{itemize}
    \item[(S1)] Suppose that for some $\beta>0$ and constants
$C_{\beta,1}< C_{\beta,2}$ it holds that
$C_{\beta,1}N^\beta \le M_N \le C_{\beta,2}N^\beta  $
(polynomial growth condition). Furthermore,
suppose that the kernel function $K$ in
parts i) and iii) is Gaussian.
\end{itemize}
Our aim is to derive regularity properties (Assumption \ref{ass_1}, Condition iii)) for the estimators in Example \ref{ex1} directly from the estimation technique and the regularity of the original functions.\\
\textbf{Part i)} We impose an additional assumption for this part. 
\begin{itemize}
    \item[(S2)] Suppose that for some $\zeta>0$, $\sup_n \mathbb{E}\|\nabla^{(b)} X_n\|^{J+\zeta}<\infty$ for $b=0,1,2$. Furthermore, suppose that the design density $f$ is twice continuously differentiable, with bounded 
derivatives and that the bandwidth $h=h_N$ is chosen such that $h=C M_N^{-b}$ for some $0 < b<1/4$.
\end{itemize}
We will show under (S1) and  (S2) that we can define a version of the sparse estimators $\tilde X_{n,M_N}$ such that for all large enough $N$
\begin{align} \label{e:vers}
\mathbb{P}(\tilde X_{n,M_N} = X_{n,M_N}: 1 \le n \le N)\ge 1-1/N
\end{align}
such that $(\tilde X_{n,M_N})_{n \in \mathbb{N}}$ satisfies Assumption \ref{ass_1}, part iii). Therefore, recall the definition 
\[
X_{n,M_N} := \frac{\frac{1}{M_N}\sum_{m=1}^{M_N} K_{h}(u-u_{n,m}^{(M_N)})y_{n,m}^{(M_N)}}{\frac{1}{M_N}\sum_{m=1}^{M_N} K_{h}(u-u_{n,m}^{(M_N)})}:= \frac{\frac{1}{M_N}\sum_{m=1}^{M_N} K_{h}(u-u_{n,m}^{(M_N)})y_{n,m}^{(M_N)}}{\hat f_{n,N}(u)},
\]
where $\hat f_{n,N}$ is defined in the obvious way.  $\hat f_{n,N}$ approximates the density of the design points $f$ which is positive and hence, on the interval $I_2$ bounded away from $0$ with 
\[
\inf_{v \in I_2}f(v)\ge c_f>0.
\]
We will now define the event 
\[
\mathcal{E}_{n,N}:=\big\{\sup_{v \in I_2}|\hat f_{n,N}(v)-f(v)|\le c_f/2\big\}
\]
and notice that on this event it holds that
\[
\inf_{v \in I_2}\hat f_{n,N}(v)\ge c_f/2.
\]
Using Lemma 2.4 in \cite{schuster:1969} implies that (for sufficiently large $N$)
\[
\mathbb{P}(\mathcal{E}_{n,N}) \le C \exp(-CM_N h^2),
\]
which decays exponentially in $N$ since the product $M_N h^2$ is by assumptions (S1) and (S2) polynomially going to $\infty$. Accordingly, if we define 
\[ 
\tilde X_{n,M_N} = X_{n,M_N} \cdot \mathbb{I}\{\mathcal{E}_{n,N}\},
\]
it follows directly, that \eqref{e:vers} is satisfied, since 
\[
\mathbb{P}\Big(\bigcup_{n=1}^N\{\tilde X_{n,M_N} \neq X_{n,M_N}\}\Big) \le \sum_{n=1}^N \mathbb{P}(\mathcal{E}_{n,N}^c) \le C N\exp(-CM_N h^2) \le 1/N,
\]
where the last inequality holds for large enough $N$.
Now, having constructed the version of the estimators $\tilde X_{n,M_N}$, we can establish Assumption \ref{ass_1}, part iii). Indeed, we will prove the stronger result 
\[
\sup_n\mathbb{E}\|\nabla^{(r)}X_n-\nabla^{(r)}\tilde X_{n,M_N}\|^J=\mathcal{O}(N^{-\gamma}),
\]
for $r=0,1$.
 Both cases work analogously and hence we confine ourselves to the case $r=0$. The tools in the work that our approximations rely on (see \cite{schuster:yakowitz:1979}) are developed for derivatives of any order. First, we notice that by definition of the sparse estimators the following upper bound holds
\begin{align*}
  & \|\tilde X_{n,M_N}\| =  \bigg\| \frac{\frac{1}{M_N}\sum_{m=1}^{M_N} K_h(u-u_{n,m}^{({M_N})})(X_n(u_{n,m}^{({M_N})})+\epsilon_{n,m}^{({M_N})})}{\frac{1}{M_N}\sum_{m=1}^{M_N} K_h(u-u_{n,m}^{({M_N})})} \bigg\| \mathbb{I}\{\mathcal{E}_{n,N}\}\\
   \le & (2/c_f) \Big\{ \frac{1}{M_N}\sum_{m=1}^{M_N} K_h(u-u_{n,m}^{({M_N})})\Big\}\max_{m}|X_n(u_{n,m}^{({M_N})})+\epsilon_{n,m}^{({M_N})}| \mathbb{I}\{\mathcal{E}_{n,N}\}\\
   \le &  (2/c_f) (c_f/2+\|f\|)\{\|X_n\|+ \max_{m} |\epsilon_{n,m}^{({M_N})}|\}.
\end{align*}
As a consequence, we can deduce that
\[
\mathbb{E}\|\tilde X_{n,M_N}\|^{J+\zeta}\le C \mathbb{E}\|X_{n}\|^{J+\zeta}+C \log(N).
\]
Here we have used that $M_N \sim N^\beta$ by assumption and that  $\mathbb{E}\max_{m} |\epsilon_{n,m}^{({M_N})}|^{J+\zeta} \le C \log(N^\beta)$ according to Lemma 2.2.2 in \cite{vaart:wellner:1996}, with $\psi(x)=\exp(x)-1$ (sharper bounds can be obtained but are not necessary in our case).
Now, we consider the approximation
\[
 \mathbb{E}\big[\|\tilde X_{n,M_N}-X_n\|^{J} \big]\le  \mathbb{E}\big[\|\tilde X_{n,M_N}-X_n\|^{J}\mathbb{I}\{\|\tilde X_{n,M_N}-X_n\|>N^{-\eta}\}\big]+ N^{-\eta J}.
\]
Here $\eta>0$ is positive but arbitrary and will be specified below. Evidently, the second term is polynomially decaying in $N$. We hence focus on the first one, that is bounded by
\begin{align*}
& C\big(\mathbb{E}\|\tilde X_{n,M_N}\|^{J+\zeta}+\mathbb{E}\|X_{n}\|^{J+\zeta}\big)^{J/(J+\zeta)} \mathbb{P}\big(\|\tilde X_{n,M_N}-X_n\|>N^{-\eta}\big)^{\zeta/(J+\zeta)}\\
\le &
C \log(N) \mathbb{P}\big(\|\tilde X_{n,M_N}-X_n\|>N^{-\eta}\big)^{\zeta/(J+\zeta)}.
\end{align*}
Here we have used the above fact that $\mathbb{E}\|\tilde X_{n,M_N}\|^{J+\zeta}\le C \log(N)$. If we can show for the probability on the right that it decays polynomially in $N$, then it also follows that $\mathbb{E}\|\tilde X_{n,M_N}-X_n\|^{J} $ is polynomially decaying in $N$. The proof of this result follows by the concentration results derived in \cite{schuster:yakowitz:1979}. More precisely, we define for $X_n$ the function
\[
w_n(v):=X_n(v)f(v), \qquad w_n^{(M)}(v):=\frac{1}{M}\sum_{m=1}^M K_h(v-u_{n,m}^{(M)})y_{n,m}^{(M)}.
\]
We can now for any $M$ observe that
\[
|\mathbb{E}[w_n^{(M)}(v)|X_n] - w_n(v)| = \bigg| \int \big[w_n(v-s)-w_n(v)\big] K_h(s) ds \bigg| \le C h \|\nabla w_n\|.
\]
Notice that by the product rule and the fact that $f$ has a bounded derivative we get that $ \|\nabla w_n\| \le C (\|X_n\|+ \|\nabla X_n\|)$. Accordingly, we obtain
\[
\mathbb{P}(\|\mathbb{E}[w_n^{(M_N)}|X_n] - w_n \|\ge N^{-\eta}/2) \le C h N^{\eta} \mathbb{E}(\|X_n\|+ \|\nabla X_n\|).
\]
By assumption, $h$ is polynomially decaying in $N$ with $h \sim M_N^{-b} \sim N^{-b\beta}$. If $\eta<b\beta$ we obtain polynomial decay in $N$ for
 $h N^{\eta}$
and thus polynomial decay of the above probabilities. Let us now for ease of reference define the event
\[
\mathcal{F}_{M_N}:=\big\{\|\mathbb{E}[w_n^{({M_N})}|X_n] - w_n \|< N^{-\eta} \big\}
\]
Choosing $\eta<b\beta$, by our above arguments, the complement $\mathcal{F}_{M_N}^c$ has polynomially decaying probability in $N$.
Now, we get the decomposition
\begin{align*}
    &\mathbb{P}\big(\|\tilde X_{n,{M_N}}-X_n\|>N^{-\eta}\big) \\
    \le & \mathbb{P}\big(\{\|X_{n,{M_N}}-X_n\|>N^{-\eta}\} \cap \mathcal{F}_{M_N}\big) +\mathbb{P}\big(\mathcal{F}_{M_N}^c\big)+\mathbb{P}\big(\mathcal{E}_{n,N}^c\big) .
\end{align*}
Finally, we analyze the decay of the first probability on the right side, which can be rewritten as
\begin{align} \label{e:schuster}
\mathbb{E}\big[\mathbb{P}\big(\{\|X_{n,{M_N}}-X_n\|>N^{-\eta}\} \cap \mathcal{F}_{M_N}\big|X_n \big)\big].
\end{align}
We apply Theorem 1 in \cite{schuster:yakowitz:1979}, which is a finite sample concentration bound for non-parametric regression. It is necessary to revisit the proof of this Theorem, to make two observations. First, the constant $C$ in the theorem depends only on the regression function through the constant $C_v$ in the proof from Lemma 3. Here in turn we see, that $C_v$ only depends on the regression function by its squared sup-norm (see constant $C_1$ in the proof of named result). Second, the bounds in the cited paper are formulated for "sufficiently large sample size". The condition of sufficiently large sample size also stems from Lemma 3. Here, it is used that the bias of the function $w_n$ (or $\nabla w_n$) is eventually smaller than any fixed $\varepsilon/2$. On the event $\mathcal{F}_{M_N}$ this condition is always satisfied with $\varepsilon=4N^{-\eta}$  and hence the cited bound holds for all sample sizes. It implies that
\[
\mathbb{P}\big(\{\|X_{n,{M_N}}-X_n\|>N^{-\eta}\} \cap \mathcal{F}_{M_N}\big|X_n \big) \le C \|X_n\|^2 M_N^{-\eta'} \le C N^{-\beta\eta'}
\]
for a sufficiently small
\[
\eta' = 1-2b-2\eta.
\]
Since $b<1/4$, we have $\eta'>1/2-2\eta$, which is positive if $\eta$ is small enough. In the case where we do all of these calculations for the first derivative, the condition turns out to be slightly stricter and $\eta'= 1-4b-2\eta$. Since $1-4b$ is positive, we can choose $\eta$ small enough such that  $\eta'= 1-4b-2\eta>0$. Notice that this is where the condition on the growth rate for $h$ stems from. We have thus shown that
 the expectation in \eqref{e:schuster} decays polynomially in $N$, proving the first property in Assumption \ref{ass_1} part iii). We get an analogous result for the first derivative, showing the second property in Assumption \ref{ass_1} part iii) with $\xi=1$. \\
 \textbf{Part ii)} We impose an additional assumption.
 \begin{itemize}
     \item[(S3)] We assume that $\sup_n\mathbb{E}\|X_n\|^J<\infty$ and
    \[
    \sup_n \mathbb{E} \bigg[\Big(\sup_{u \neq v} \frac{|X_n(u)-X_n(v)|}{|u-v|^{\xi}}\Big)^J\bigg] <\infty.
    \]
 \end{itemize}
Under (S1) and (S3), we show that Condition iii) of Assumption \ref{ass_1}, follows with the same $\xi$ as for the original functions and with $C_3=3^JC_2$.\\
We begin our proof with the second claim of Assumption \ref{ass_1}, part iii). Let for the moment $M$ be fixed but arbitrary. The random function $X_n$ is by assumption a.s. Hölder continuous. We fix the outcome and call the Hölder constant $C_X$. Now, consider two points $u,v \in I_2$ and the difference $X_{n,M}(u)-X_{n,M}(v)$. The values that $X_{n,M}$ takes in these points depend on the intervals $[0,1/M], [1/M, 2/M],\ldots$ that these points fall into. Hence, we have to consider different cases - namely, both points fall into the same interval, both points fall into adjacent intervals, both points fall into non-adjacent intervals. All cases are treated similarly and we consider the third case, where $u \in [k/M, (k+1)/M]$ and $v \in [\ell/M, (\ell+1)/M]$ with $k+1<\ell$. We now further decompose the difference into
\begin{align*}
& |X_{n,M}(u)-X_{n,M}(v)| \\
\le &|X_{n,M}(u)-X_{n,M}((k+1)/M)|
+|X_{n,M}((k+1)/M)-X_{n,M}(\ell/M)|\\
& +|X_{n,M}(\ell/M)-X_{n,M}(v)|\\
= & |X_{n,M}(u)-X_{n}((k+1)/M)|
+|X_{n}((k+1)/M)-X_{n}(\ell/M)|\\
& +|X_{n}(\ell/M)-X_{n,M}(v)| \\
\le & |X_{n}(k/M)-X_{n}((k+1)/M)|
+|X_{n}((k+1)/M)-X_{n}(\ell/M)|\\
& +|X_{n}(\ell/M)-X_{n}((\ell+1)/M)|\\
\le & C_X|[1/M]^\xi+[(\ell-k-1)/M]^\xi+[1/M]^\xi|
\le 3 C_X  |u-v|^\xi.
\end{align*}
In the first step we have added and subtracted terms, in the second step, we have used that sparse estimators and latent functions are identical in the gridpoints, in the third one, we have used that the sparse estimators are linear interpolations of the latent function between gridpoints, in the fourth one Hölder continuity of $X_n$ and in the last one that $1/M, (\ell-k-1)/M \le |u-v|$. Hence,
\[
    \sup_{n,M} \mathbb{E} \bigg[\Big(\sup_{u \neq v} \frac{|X_{n,M}(u)-X_{n,M}(v)|}{|u-v|^{\xi}}\Big)^J\bigg] <\infty
    \]
and thus the
 second claim of Assumption \ref{ass_1} part iii) follows. For the first claim of the Assumption, notice that in any $u \in [0,1]$ we have a closest gridpoint $u^*=k^*/M$ such that
\[
|X_n(u)-X_{n,M}(u)| \le |X_n(u) -X_n(u^*)|+ |X_{n,M}(u) -X_{n,M}(u^*)|.
\]
Now, using the Hölder continuity of each function in $J$th moment yields
\begin{align*}
   & \sup_n \mathbb{E}\|X_n-X_{n,M}\|^{J} \\
   \le &\frac{C}{M^{J\xi}}\bigg\{\sup_n \mathbb{E} \bigg[\Big(\sup_{u \neq v} \frac{|X_n(u)-X_n(v)|}{|u-v|^{\xi}}\Big)^J\bigg]\\
   &+\sup_{n,M} \mathbb{E} \bigg[\Big(\sup_{u \neq v} \frac{|X_{n,M}(u)-X_{n,M}(v)|}{|u-v|^{\xi}}\Big)^J\bigg] \bigg\}.
\end{align*}
Here we have used that $|u-u^*|\le 1/M$. The right side converges to $0$ polynomially in $N$ for $M=M_N$ and $M_N \ge C N^\beta $ (polynomial growth condition of (S1)).
Thus, the first claim of Assumption \ref{ass_1} part iii) follows.

\textbf{Part iii)} We now turn to the case of kernel density estimation and impose the following additional condition.
\begin{itemize}
    \item[(S4)] Suppose that for some $\zeta>0$, $\sup_n \mathbb{E}\|\nabla^{(b)} X_n\|^{J+\zeta}<\infty$ for $b=0,1,2$. The bandwidth satisfies $h=D_{n,M_N}^{-b}$ for some $0 < b<1/4$. Moreover, for some  $0<\beta_1<\beta_2<\infty$ and $0<C_1<C_2<\infty$ the sample size satisfies the bounds $C_1N^\beta_1<D_{M_N}<C_2N^\beta_2$.
\end{itemize}
If (S1) and (S4) hold, Assumption \ref{ass_1}, part iii) follows with $\xi=1$.
The proof structure is similar to that in part i), but simpler since it is not necessary to construct a modified version of the estimators. As in part i), we need to show polynomial decay of 
\[
\mathbb{E}\|\nabla^{(r)}X_{n,M_N}-\nabla^{(r)}X_n\|^J
\] 
for  $r=0,1$. Again we focus on the case $r=0$, where
\begin{align*}
&\mathbb{E}\|X_{n,M_N}-X_n\|^J \\
\le & C \mathbb{E}\|X_{n,M_N}-\mathbb{E}[X_{n,M_N}|X_n]\|^J+C \mathbb{E}\|\mathbb{E}[X_{n,M_N}|X_n]-X_n\|^J\\
=: & R_1+R_2.
\end{align*}
Now, we can consider each part separately. For $R_2$, we notice that according to the Corollary before Lemma 2 in \cite{bhattacharya:1967} we have
\[
R_2 \le C h^J \mathbb{E}\|\nabla X_n\|^J \le C h^J.
\]
Notice that we have used here the proof of named result to obtain an explicit constant depending on $\nabla X_n$.
Next, we turn to $R_1$, which we split up into
\begin{align*}
    \{R_1\}^{1/J} \le &R_{1,1}+h^{1/2}\\
    R_{1,1}:= & \{\mathbb{E}\|\mathbb{E}[X_{n,M_N}|X_n]-X_{n,M_N}\|^{J+\zeta}\}^{J/(J+\zeta)} \\
    & \cdot \{\mathbb{P}(\|\mathbb{E}[X_{n,M_N}|X_n]-X_{n,M_N}\|>h^{1/2})\}^{\zeta/(J+\zeta)}=:R_{1,1,1}\cdot R_{1,2,2}.
\end{align*}
It is easy to prove a polynomial bound in $N$ for $R_{1,1,1}$
 and we can further upper bound $R_{1,1,2}$ which works as in part i) 
 of this Lemma. More precisely, we can use Lemma 2.2 in \cite{schuster:1969}, 
 which is an exponential concentration result for the kernel density estimator 
 around its mean. This shows an exponential decay of $R_{1,1,2}$ 
 and hence of $R_{1,1}$ in $N$ concluding the proof.

\section{Further information} \label{sec_details}

\subsection{Review of change point analysis for functional data} $ $

As an application of the weak invariance principle developed
in this paper, we have discussed a change point monitoring scheme for time series of multivariate random functions in Sections \ref{sec_mon}. There exists a large literature on inference for change points in general and for functional data in particular. We can hence only refer to some illustrative works, rather than giving a comprehensive literature review. For a concise overview of the change point literature, we refer the interested reader to \cite{horvath:rice:2014}. In functional data, change point detection has usually been developed for time series in $L^2([0,1])$ and a retrospective setup. In \cite{aue:hormann:horvath:reimherr:2009} and \cite{berkes:gabrys:horvath:kokoszka:2009} tests for mean changes were developed that project the data functions on a small number of principal components. Methods without projections were later developed in \cite{horvath:kokoszka:rice:2014} and versions for weakly dependent data (among others) by \cite{aston:kirch:2012} and \cite{aue:rice:sonmez:2018}. Changes in multivariate time series of random functions have been investigated in \cite{gromenko:kokoszka:reimherr:2017}, who developed test statistics under spatial separability of the time series. More recently, \cite{dette:quanz:2023} proposed tests for relevant changes in multivariate functional time series. Changes in other distributional features than the mean, such as the covariance operator or its eigensystems have been studied in works such as \cite{stoehr:aston:kirch:2021,aue:rice:sonmez:2020,dette:kutta}. Changes in functional time series of continuous functions were first investigated by \cite{dette:kokot:aue} and adapted to the study of covariance kernels in \cite{dette:kokot:2022}.

\subsection{On strong mixing} $ $

To specify the dependence structure of our data, we invoke in this work the notion of $\alpha$-mixing. Let us therefore consider a  time series of random variables $(X_n)_{n \in \mathbb{N}}$, with values in a generic Banach space. We say that the time series $(X_n)_{n \in \mathbb{N}}$ is $\alpha$-mixing, if there exists a sequence of non-negative numbers $\alpha(k) \downarrow 0$ such that
\[
  |\mathbb{P}(A \cap B) - \mathbb{P}(A)\mathbb{P}(B)| \le \alpha(k),
\]
for any two events $A \in \sigma(X_i: 1 \le i \le m)$ and $B \in \sigma(X_i: i \ge m+k)$ and any $m,k \ge 1$. Here, $\sigma$ denotes the $\sigma$-algebra generated by the respective random variables. It is a direct consequence of the definition of $\alpha$-mixing, that it is stable under measurable transforms. More precisely, if $X_i$ takes values in the Banach space $\mathcal{X}$ and $g:\mathcal{X} \to \mathcal{Y}$ is a measurable map into another Banach space,  the time series $(g(X_n))_{n \in \mathbb{N}}$ is also $\alpha$-mixing with coefficients $(\alpha_g(n))_{n \in \mathbb{N}}$ satisfying $\alpha_g(n) \le \alpha(n)$ for all $n \ge 1$.
For details on $\alpha$-mixing, as well as helpful covariance inequalities, 
we refer to the monograph of \cite{dehling:mikosch:sorensen:2002}
 and for a survey on  mixing types to \cite{bradley:1986}, vol. 1. 
 In Section \ref{s:cons}, we also use the stronger mixing requirement of 
 $\beta$-mixing. We can define the $\beta$-mixing coefficient as
\[
   \beta(k):= \sup \frac{1}{2} \sum_{i=1}^{I} \sum_{j=1}^J \big|\mathbb{P}\big(A_i \cap B_j\big)-\mathbb{P}\big(A_i\big) \mathbb{P}\big(B_j\big)\big|,
\]
for any finite $I, J$ and events $A_1,\ldots,A_I \in \sigma(X_i: 1 \le i \le m)$ and $B_1,\ldots,B_J \in \sigma(X_i: i \ge m+k)$ that are mutually disjoint. It is well-known that the relation $2\alpha(k) \le \beta(k)$ holds and hence $\beta$-mixing is stronger than $\alpha$-mixing (see \cite{bradley:1986}). Originally, $\beta$-mixing was introduced in a paper by \cite{volkonskii:1959} and in the literature it is sometimes referred to as "absolute regularity". $\beta$-mixing plays an important role in the construction of coupled processes, as has been recognized, e.g., in the famous contribution of  \cite{berbee:1979}.

\end{document}